\numberwithin{equation}{section}
\newcommand{\J}{{{\mathcal{J}_\varepsilon}}}
\newcommand{\R}{{{\mathbb R}}}
\newcommand{\N}{{{\mathbb N}}}
\newcommand{\Z}{{{\mathbb Z}}}
\newcommand{\T}{{{\mathbb T}}}
\newcommand{\C}{{{\mathbb C}}}
\providecommand{\dr}[1]{\partial_{r}#1}
\providecommand{\dx}[1]{\partial_{\xi}#1}
\providecommand{\dxx}[1]{\partial_{\xi}^{2}#1}
\providecommand{\de}[1]{\partial_{\theta}^{#1}}
\providecommand{\dfi}[1]{\partial_{\varphi}^{#1}}
\providecommand{\abs}[1]{\lvert#1\rvert}
\providecommand{\e}[1]{^{#1}}
\providecommand{\dhil}[1]{\dot{H}^{#1}(\T)}
\providecommand{\norm}[1]{\lVert#1\rVert}
\theoremstyle{plain}
\newtheorem{lem}{Lemma}[section]
\newtheorem{prop}{Proposition}[section]
\newtheorem{prop*}{Proposition}
\newtheorem{thm}{Theorem}[section]
\newtheorem{nota}{Remark}[section]
\theoremstyle{definition}
\newtheorem{hip}{Hypothesis}[section]
\providecommand{\customgenericname}{}
\newcommand{\newcustomtheorem}[2]{%
  \newenvironment{#1}[1]
  {%
   \renewcommand\customgenericname{#2}%
   \renewcommand\theinnercustomgeneric{##1}%
   \innercustomgeneric
  }
  {\endinnercustomgeneric}
}
\title[Analysis of a thin film approximation]{Analysis of a thin film approximation for two-fluid Taylor-Couette flows}
\author{Tania Pernas-Casta\~{n}o and Juan J. L. Vel\'azquez}
\begin{document}
\maketitle

\begin{abstract}
In this work we study the evolution of the interface between two different fluids in two concentric cylinders when the velocity is given by the Navier-Stokes equation and one of the fluids is thin. We present a formal asymptotic derivation of the evolution equation for the interface under different scaling assumptions for the surface tension. We then study the different types of the stationary solutions and travelling waves for the resulting equation. In particular, we state a global well posedness result and using  Center Manifold Theory, we obtain detailed information about the long time asymptotics of the solutions of the problem.
\end{abstract}
\section{introduction}

The fluid flow which arises when a viscous fluid is confined between two concentric cylinders is termed as the Taylor-Couette flow. This has been extensively studied in the physical and mathematical literature (cf. \cite{Baumert}, \cite{Chandra}, \cite{Chossat}, \cite{Drazin}, \cite{Schlichting}). Couette was the first who experimentally found fluid flows for which the streamlines of the fluid are circles concentric with the two cylinders. In 1923, G. I. Taylor studied mathematically the solution of the Navier-Stokes equations which describes this flow and he discovered that this solution becomes unstable if the difference of angular velocities of the two cylinder confining the liquid is sufficiently large (cf. \cite{Taylor}).

Most of the studies of Taylor-Couette flow have been made for just one confined fluid. However, there are situations where it is relevant to consider also the dynamics of two or more fluid placed in the Taylor-Couette geometry.
The stability properties of these have been study by Renardy and Joseph in \cite{renardy}. Numerical simulations indicate that in some parameter regimes circular interfaces separating two fluids become unstable and patterns exhibiting fingering develop (cf. \cite{Hua}, \cite{peng}).

We will be concerned with the study of the Taylor-Couette flows for two fluids, in the case in which the volume filled by one of the fluids is much smaller than the other one. More precisely, if we denote $R_1$, $R_2$ the radii of the internal and external cylinders enclosing the fluids, we will consider flows in which we parametrize the interface in polar coordinates as
$\left\{  r=R_{1}(1+\varepsilon h\left(  \theta,t\right) ) \right\}  $ or $\left\{  r=R_{2}(1-\varepsilon\frac{R_1}{R_2} h\left(  \theta,t\right))  \right\}  $. Notice that $\varepsilon$ measures the thickness of the thin layer of fluid in non-dimensional units.  In this paper we will consider only solutions in which $h(\theta,t)>0$ for all $(\theta,t)$. In particular, we will not consider situations in which the interface has contact lines with the internal cylinder. 

We will restrict ourselves to the case in which the velocity of the fluids satisfies the two-dimensional Navier-Stokes equations. Notice that we ignore the dependence on the perpendicular component $z.$ We assume that the inner cylinder rotates with angular velocity $\omega$, while we keep the outer cylinder at rest. We will set the center of the confining cylinders as the origin of coordinates $O.$ 

Under these assumptions it will be possible to derive, using matched asymptotic expansions (see Section \ref{S3}), a thin film approximation for the evolution of the interface separating both fluids. More precisely, if we write the surface tension in non-dimensional form as $\gamma=\frac{\tilde{\gamma}}{\rho_2R_1\e{3}\omega\e{2}}$,  we obtain that the evolution of the interface separating both fluids satisfies the following PDE:%
\begin{equation}
h_t+\de{}(\frac{h^2}{2})+\de{}{(h^3(\de{}{h}+\partial_\theta^3 h))}=0\label{S1E1}%
\end{equation}
when we consider that $\gamma$ has the form $\gamma\approx\frac{b}{\varepsilon\e{2}}$ for some $b>0$ as $\varepsilon\to\infty$. The constant $b$ has be eliminated from \eqref{S1E1} by means of a trivial rescaling.  In this same Section, we deduce the evolution of the interface when $\gamma$ is much larger that $\frac{1}{\varepsilon\e{2}}$. In that case we have,
\begin{equation}\label{S1E2}
h_t+\de{}{(h^3(\de{}{h}+\partial_\theta^3 h))}=0
\end{equation}

Equations of the same type as those in \eqref{S1E1} and \eqref{S1E2} appear in the study of the motion of thin layers of fluid moving in inclined planes (cf. \cite{benney}, \cite{manneville} and \cite{planosinclinados}), although the detailed form of the nonlinear terms as well as the boundary conditions are different from the ones in this paper. Thin film approximations of free boundary problems for the Stokes equation using matched asymptotics, have been extensively used in the physical and mathematical literature (cf. \cite{Ockendon} and references
therein, \cite{Oron}).  Rigorous derivations of the thin film equation taking as starting point a free boundary problem for the Stokes system in the case of one fluid has been considered in \cite{Gunther}. The analysis of thin film equations in the present of contact lines is well developed research area (cf. \cite{beretta},  \cite{bernis_friedman}, \cite{bernis_peletier}, \cite{bertozzi_pugh}, \cite{ferreira_bernis}, \cite{fischer}, \cite{giacomelli_gnann}, \cite{giacomelli_otto}, \cite{grun}, \cite{otto}). The dynamics of a coupled system for a thin film approximation of the two phase Stokes flow has been considered in \cite{Escher} as well as \cite{Belinchon}. In particular, in \cite{Escher} has been proved that the interfaces converge exponentially to a planar stationary solution in the particular setting considered in that paper.

Section \ref{S4} is devoted to analyse two different kind of steady solutions of \eqref{S1E1} and \eqref{S1E2}. We are interested in constant solutions or in solutions close to constant for equations \eqref{S1E1} and \eqref{S1E2} in the original coordinate system and in the rotating one.

In Section \ref{S5} we study rigorously the stability of the constant solutions of the equations (\ref{S1E1}) and \eqref{S1E2}.  These constant solutions describe circular interfaces which are concentric with the confining cylinders. In Subsection \ref{S5ss1}, in order to study the stability of these solutions for the case with $\gamma\approx\frac{b}{\varepsilon\e{2}}$, we first prove a global existence result (cf. Theorem \ref{thglobal}). In order to study the long time asymptotic of the solutions we try a linearisation argument. It turns out that the resulting linearised problem has two zero eigenvalues, and therefore the stability properties of the constant solutions depend on quadratic and higher order terms. In particular small perturbations of the constant solution do not converge exponentially to zero in general. To deal with this difficulty we use the theory of center manifolds for quasilinear systems in the form developed in \cite{Mielke} and \cite{centermanif}. This allows us to prove that the solutions of (\ref{S1E1}) with initial data close to constant converge to the constant value with an error of order $O\left(  \frac {1}{\sqrt{t}}\right)  $ as $t\rightarrow\infty.$ A more detailed analysis of the solution shows that, for long times, the interface behaves to the leading
order as a circle whose center moves along a spiral towards the origin $O$ as $t\rightarrow\infty$ (cf. Theorem \ref{thestab}). Notice that this asymptotic behaviour of the interface for long times holds for arbitrary choices of the
viscosities and radius of the two viscous fluids. In Subsection \ref{S5ss2}, we deal with the case for $\gamma\gg\frac{1}{\varepsilon\e{2}}$ (cf. Theorem \ref{thglobalgrande}). A direct computation allows us to conclude that solutions can be interpreted as circular interfaces with a center shifted from the origin $O$.

\section{2D Taylor-Couette flow for two incompressible fluids}\label{S2}

We first formulate the equations of the Taylor-Couette flow for two immiscible fluids. The velocity of the fluid is given by Navier-Stokes problem:
\begin{equation*}
\begin{cases}
\rho_1(\tilde{u}\e{1}_t+\tilde{u}\e{1}\cdot\nabla \tilde{u}\e{1})=-\nabla \tilde{p}\e{1}+\mu_1\Delta\tilde{u}\e{1},\nabla\cdot \tilde{u}\e{1}=0\quad\textit{in}\quad \tilde{\Omega}_1(t)\\
\rho_2(\tilde{u}\e{2}_t+\tilde{u}\e{2}\cdot\nabla \tilde{u}\e{2})=-\nabla\tilde{p}\e{2}+\mu_2\Delta\tilde{u}\e{2},\nabla\cdot \tilde{u}\e{2}=0\quad\textit{in}\quad \tilde{\Omega}_2(t)
\end{cases}
\end{equation*}
where $(\tilde{\textbf{x}},\tilde{t})\in\R^{2}\times\R^{+}$, $\tilde{u}\e{i}=(\tilde{u}\e{i}_{1}(\tilde{\textbf{x}},\tilde{t}),\tilde{u}\e{i}_{2}(\tilde{\textbf{x}},\tilde{t}))$ for $i=1,2$ is the incompressible velocity, $\mu_i$ are dynamic viscosity of $\tilde{u}\e{i}$ respectively for $i=1,2$ and $\tilde{p}\e{i}$ are the corresponding pressure in each of the fluids.
\begin{figure}[h]\label{figdom}
\includegraphics[width=90mm]{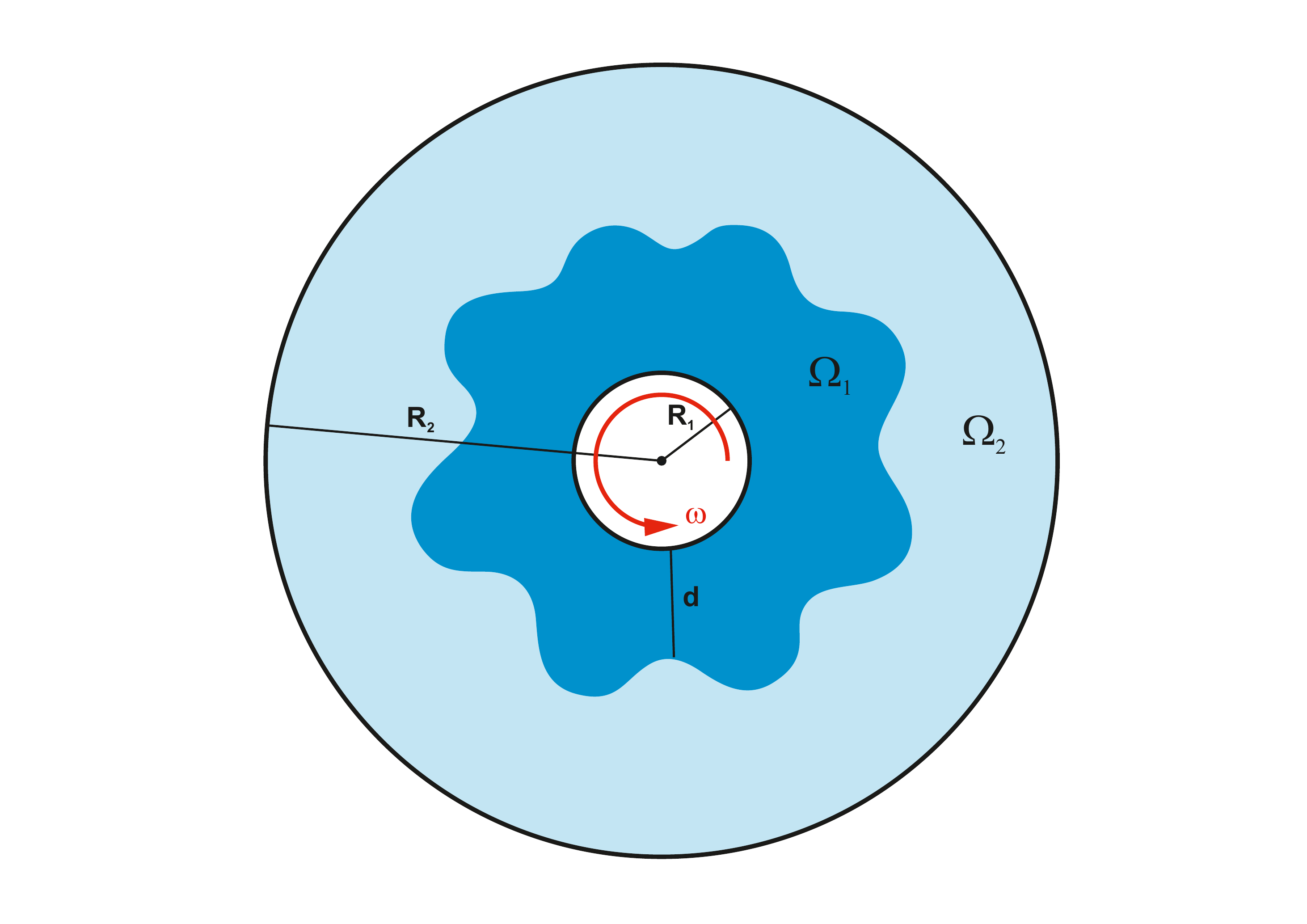}
\caption{Taylor-Couette flow for two fluids}\end{figure}
 As we can see in the Figure \ref{figdom} we use polar coordinates, i.e., $\tilde{\textbf{x}}=(\tilde{x}_1,\tilde{x}_2)=(r\cos\theta,r\sin\theta)$ that 
\begin{displaymath}
\begin{cases}
 \tilde{\Omega}_1(\tilde{t})=\{\tilde{\textbf{x}}\in\R\e{2}:R_1<r<R_1+d h(\theta,\tilde{t})\},\\
 \tilde{\Omega}_2(\tilde{t})=\{\tilde{\textbf{x}}\in\R\e{2}:R_1+d h(\theta,\tilde{t})<r<R_2\}.
\end{cases}
\end{displaymath}
We assume that $h(\theta,\tilde{t})>0$ and $d>0$ is the average high of the fluid 1 which is defined by:
\begin{displaymath}
A_0=\pi\Big((R_1+d)\e{2}-R_1\e{2}\Big)
\end{displaymath}
where $A_0$ is the area filled by the fluid 1.
 
Moreover, the following boundary conditions hold:
\begin{enumerate}
\item $\tilde{u}^1= \omega(\tilde{x}_2,-\tilde{x}_1)$ for $\tilde{\textbf{x}}\in\partial B_{R_1}(0)$ and $\tilde{u}^2=0$ for $\tilde{\textbf{x}}\in\partial B_{R_2}(0)$
\item $\tilde{u}^1\cdot n=\tilde{u}^2\cdot n=V_n$,
\item $\tilde{u}^1\cdot t=\tilde{u}^2\cdot t$,\hspace*{4cm}$\textit{in}\quad\partial\Omega$
\item $t(\tilde{\Sigma}^2-\tilde{\Sigma}^1)\cdot n=0$,
\item $n(\tilde{\Sigma}^2-\tilde{\Sigma}^1)\cdot n=\tilde{\gamma}\tilde{\kappa}$.
\end{enumerate}
Where $n$ is the normal vector to the interface pointing from domain $ \tilde{\Omega}_1$ to $\tilde{\Omega}_2$, $t$ the tangent vector, $\omega$ the angular velocity of the inner cylinder, $V_n$ the normal velocity at the interface, $\tilde{\gamma}$ surface tension, $\tilde{\kappa}$ curvature of the interface and $\tilde{\Sigma}^i=-\tilde{p}\e{i}I+\mu_i(\nabla \tilde{u}\e{i}+(\nabla \tilde{u}\e{i})\e{T})$ represents the stress tensor corresponding each domain $ \tilde{\Omega}^i$.

Now, in order to consider the non-dimensional form of the problem we take:

\begin{align*}
 &\tilde{\textbf{x}}=R_1\textbf{x};\quad \tilde{t}=\frac{1}{\omega}t;\quad \tilde{u}\e{i}=\omega R_1 u\e{i};\quad \tilde{p}\e{i}=\rho_2\omega\e{2} R_1\e{2} p\e{i};\quad\varepsilon=\frac{d}{R_1};\quad \gamma=\frac{\tilde{\gamma}}{\rho_2R_1\e{3}\omega\e{2}};\quad\eta=\frac{R_2}{R_1};\\
 &\zeta=\frac{\rho_1}{\rho_2};\quad Re=\frac{\rho_2\omega R_1\e{2}}{\mu_2}\quad\textit{and}\quad \mu=\frac{\mu_1}{\mu_2}.
\end{align*}

For this case (see Figure \ref{figdomthin}), we have:
\begin{equation}\label{nsnodim}
\begin{cases}
\zeta\Big(u\e{1}_t+u\e{1}\cdot\nabla u\e{1}\Big)=-\nabla p\e{1}+\frac{\mu}{Re}\Delta u\e{1},\nabla\cdot u\e{1}=0\quad\textit{in}\quad \Omega_1(t)\\
u\e{2}_t+u\e{2}\cdot\nabla u\e{2}=-\nabla p\e{2}+\frac{1}{Re}\Delta u\e{2},\nabla\cdot u\e{2}=0\quad\textit{in}\quad \Omega_2(t)
\end{cases}
\end{equation}
where 
\begin{displaymath}
\begin{cases}
\Omega_1(t)=\{\textbf{x}\in\R\e{2}:1<r<1+\varepsilon h(\theta,t)\},\\
\Omega_2(t)=\{\textbf{x}\in\R\e{2}:1+\varepsilon h(\theta,t)<r<\eta\}.
\end{cases}
\end{displaymath}
\begin{figure}[h]\label{figdomthin}
\includegraphics[width=90mm]{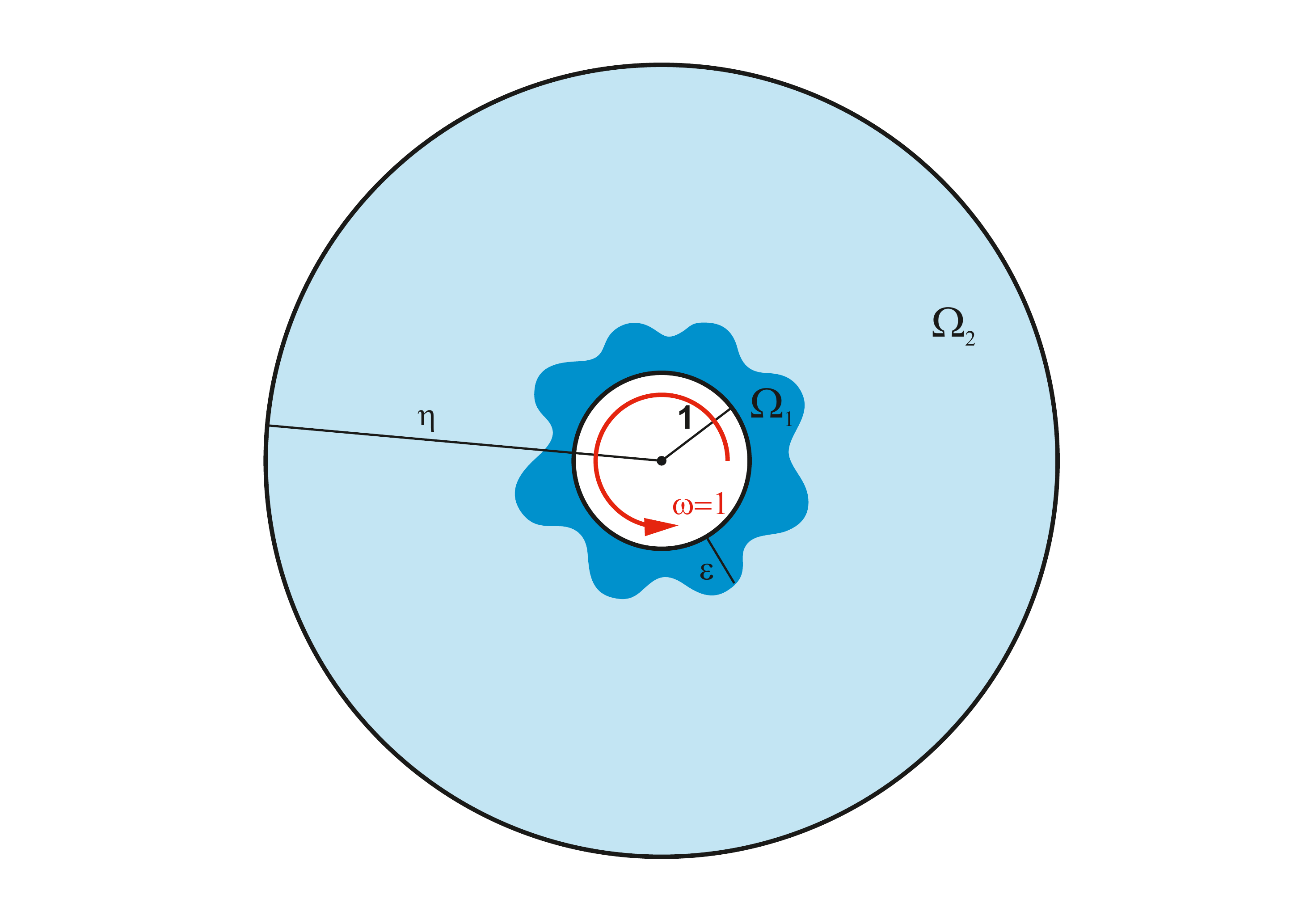}\caption{Taylor-Couette flow with a thin film layer near the internal cylinder (in non-dimensional units)}
\end{figure}

The boundary conditions are as follows:

\begin{align}
&u^1=(x_2,-x_1)\quad\textit{for}\quad\textbf{x}\in\partial B_{1}(0)\quad\textit{and}\quad u^2=0\quad\textit{for}\quad\textbf{x}\in\partial B_{\eta}(0)\label{nscond1}\\
&u^1\cdot n=u^2\cdot n=V_n\quad\textit{in}\quad\partial\Omega\label{nscond2}\\
&u^1\cdot t=u^2\cdot t\quad\textit{in}\quad\partial\Omega\label{nscond3}\\
&t(\Sigma^2-\Sigma^1)\cdot n=0\quad\textit{in}\quad\partial\Omega\label{nscond4}\\
&n(\Sigma^2-\Sigma^1)\cdot n=\gamma\kappa\quad\textit{in}\quad\partial\Omega.\label{nscond5}
\end{align}

Here $n$ is the normal vector to the interface pointing from domain $ \Omega_1$ to $ \Omega_2$, $t$ the tangent vector, $V_n$ the normal velocity at the interface, $\gamma$ is the non-dimensional surface tension, $\kappa$ is the curvature of the interface and $\Sigma^i$ represents the stress tensor corresponding each domain $\Omega^i$, namely:
\begin{equation}\label{sigmas}
\begin{cases}
\Sigma\e{1}=-p\e{1}I+\frac{\mu}{Re}(\nabla u\e{1}+(\nabla u\e{1})\e{T})\\
\Sigma\e{2}=-p\e{2}I+\frac{1}{Re}(\nabla u\e{2}+(\nabla u\e{2})\e{T})
\end{cases}
\end{equation}

The system of equations \eqref{nsnodim}-\eqref{sigmas} describes the 2D Taylor-Couette flow for two incompressible fluids.

In this paper we will consider the case in which one the volume filled by one of the fluids is much smaller than the other, namely, $\varepsilon\to 0\e{+}$ or $\varepsilon\to(\eta-1)\e{-}$. There is a mathematical interesting limit in which the effect of the surface tension and the shear are comparable. This corresponds to taking $\gamma\approx\frac{1}{\varepsilon\e{2}}$ and $\varepsilon\to 0\e{+}$
(alternatively, $\gamma\approx\frac{1}{(\eta-1-\varepsilon)\e{2}}$ for $\varepsilon\to(\eta-1)\e{-}$). In these asymptotic regimes we will be able to use a thin film approximation to describes the form of the interface. Notice that this limit is physically meaningful because if we assume that the fluid 1 is oil and the fluid 2 is water, since $\tilde{\gamma}=0.05$ $N/m$ and $\rho_2=1000$ $kg\cdot m\e{-3}$ then $\gamma=\frac{5\cdot 10\e{-5}m\e{3}s\e{-2}}{R_1\e{3}\omega\e{2}}$. For instance, if we take $R_1=1$ $mm$ and $\omega=1$ $rpm$ we obtain $\gamma=1267$ that can be written as $\frac{1}{\varepsilon\e{2}}$ with $\varepsilon=0.03$. This corresponds to an oil layer of $30$ $\mu m$.

\section{Derivation of a thin film approximations for two-fluids Taylor-Couette flows}\label{S3}

In this section we derive the equation \eqref{S1E1} using formal matched asymptotic expansions. We will assume that the parameters $\zeta$, $(\eta-1)$ and $\mu$ are kept of order one. The Reynolds number $Re$ will be assumed to be of order one but sufficiently small (including zero) in order to ensure that the Taylor instability for the Taylor-Couette flow does not arise (cf. \cite{Taylor}). The issue of the stability of the laminar two-fluid Taylor-Couette flow is an interesting question which deserves further study. The numerical results in \cite{renardy} indicate that the above mention solution is stable for sufficiently small Reynolds numbers.
\subsection{Case $\gamma\approx\frac{b}{\varepsilon\e{2}}$}\label{S3s1}
We consider first the case which the volume fraction of the fluid 1 is much smaller than the one filled by the fluid 2. Under such assumption we will set that the surface tension scales with the non-dimensional thickness $\varepsilon$ as 
\begin{equation}\label{tens}
\gamma\approx\frac{b}{\varepsilon\e{2}}\quad\textit{as}\quad\varepsilon\to 0\quad\textit{and}\quad b>0.
\end{equation}

We rewrite \eqref{nsnodim} in polar coordinates:
\begin{displaymath}
\left\{ \begin{array}{ll}
\dr{(ru\e{1}_r)}+\de{}{u\e{1}_{\theta}}=0\\
\zeta\Big(\partial_t u\e{1}_r+u\e{1}_r\partial_r u\e{1}_r+\frac{u\e{1}_\theta}{r}\de{}{u}\e{1}_r-\frac{(u\e{1}_\theta)\e{2}}{r}\Big)=\frac{\mu}{Re}\Big(\dr{(\frac{1}{r}\dr{(ru\e{1}_r)})}+\frac{\de{2}{u}\e{1}_{r}}{r^2}-\frac{2\de{}{u}\e{1}_{\theta}}{r^2}\Big)-\dr{p\e{1}}\\
\zeta\Big(\partial_t u\e{1}_\theta+u\e{1}_r\partial_r u\e{1}_\theta+\frac{u\e{1}_\theta}{r}\de{}{u\e{1}_\theta}-\frac{u\e{1}_r u\e{1}_\theta}{r}\Big)=\frac{\mu}{Re}\Big(\dr{(\frac{1}{r}\dr{(ru\e{1}_\theta)})}+\frac{\de{2}{u\e{1}_{\theta}}}{r^2}+\frac{2\de{}{u\e{1}_{r}}}{r^2}\Big)-\frac{1}{r}\de{}{p\e{1}}
\end{array} \right.
\end{displaymath}

\begin{displaymath}
\left\{ \begin{array}{ll}
\dr{(ru\e{2}_r)}+\de{}{u\e{2}_{\theta}}=0\\
\partial_t u\e{2}_r+u\e{2}_r\partial_r u\e{2}_r+\frac{u\e{2}_\theta}{r}\de{}{u\e{2}_r}-\frac{(u\e{2}_\theta)\e{2}}{r}=\frac{1}{Re}\Big(\dr{(\frac{1}{r}\dr{(ru\e{2}_r)})}+\frac{\de{2}{u\e{2}_{r}}}{r^2}-\frac{2\de{}{u\e{2}_{\theta}}}{r^2}\Big)-\dr{p\e{2}}\\
\partial_t u\e{2}_\theta+u\e{2}_r\partial_r u\e{2}_\theta+\frac{u\e{2}_\theta}{r}\de{}{u\e{2}_\theta}-\frac{u\e{2}_r u\e{2}_\theta}{r}=\frac{1}{Re}\Big(\dr{(\frac{1}{r}\dr{(ru\e{2}_\theta)})}+\frac{\de{2}{u\e{2}_{\theta}}}{r^2}+\frac{2\de{}{u\e{2}_{r}}}{r^2}\Big)-\frac{1}{r}\de{}{p\e{2}}
\end{array} \right.
\end{displaymath}

In order to get the evolution equation in the limit we are going to use asymptotics. To make the computations easier we make the following change of variables: 
\begin{displaymath}
u\e{i}_r=\varepsilon^2 w\e{i}_{\xi};\quad u\e{i}_\theta - 1=\varepsilon w\e{i}_\theta;\quad p\e{i}=\frac{1}{\varepsilon}P\e{i}\quad\textit{for}\quad i=1,2\quad\textit{and}\quad \xi=\frac{r-1}{\varepsilon}.
\end{displaymath}
Using the above changes the equations are the following:

\begin{displaymath}
\left\{ \begin{array}{ll}
\dx{ w\e{1}_\xi}+\varepsilon\dx{(\xi w\e{1}_\xi)}+\de{}{ w\e{1}_{\theta}}=0\\
\zeta\Big(\varepsilon\e{4}\partial_t w\e{1}_\xi+\varepsilon\e{5}w\e{1}_\xi\partial_\xi w\e{1}_\xi+\frac{\varepsilon\e{4}}{1+\varepsilon\xi}(\varepsilon w\e{1}_\theta +1)\de{}{w\e{1}_\xi}-\frac{\varepsilon\e{2}}{1+\varepsilon\xi}(\varepsilon w\e{1}_\theta
+1)\e{2}\Big)=\\
\frac{\mu}{Re}\Big(\varepsilon^2\dx{(\frac{1}{1+\varepsilon\xi}\dx{((1+\varepsilon\xi) w\e{1}_\xi)})}+\frac{\varepsilon^4}{(1+\varepsilon\xi)^2}\de{2}{ w\e{1}_\xi}-\frac{2\varepsilon^3}{(1+\varepsilon\xi)^2}\de{}{ w\e{1}_\theta}\Big)-\dx{P\e{1}}\\
\zeta\Big(\varepsilon\e{2}\partial_t w\e{1}_\xi+\varepsilon\e{3}w\e{1}_\xi\partial_\xi w\e{1}_\theta+\frac{\varepsilon\e{2}}{1+\varepsilon\xi}(\varepsilon w\e{1}_\theta +1)\de{}{w\e{1}_\theta}-\frac{\varepsilon\e{3}}{1+\varepsilon\xi}w\e{1}_\xi(\varepsilon w\e{1}_\theta
+1)\Big)=\\
\frac{\mu}{Re}\Big(\dx{(\frac{1}{1+\varepsilon\xi}\dx{((1+\varepsilon\xi) w\e{1}_\theta)})}-\frac{\varepsilon}{(1+\varepsilon\xi)^2}+\frac{\varepsilon^2}{(1+\varepsilon\xi)^2}\de{2}{ w\e{1}_\theta}+\frac{2\varepsilon^3}{(1+\varepsilon\xi)^2}\de{}{ w\e{1}_\xi}\Big)-\frac{1}{(1
+\varepsilon\xi)}\de{}{P\e{1}}
\end{array} \right.
\end{displaymath}

\begin{displaymath}
\left\{ \begin{array}{ll}
\dx{ w\e{2}_\xi}+\varepsilon\dx{(\xi w\e{2}_\xi)}+\de{}{ w\e{2}_{\theta}}=0\\
\varepsilon\e{4}\partial_t w\e{2}_\xi+\varepsilon\e{5}w\e{2}_\xi\partial_\xi w\e{2}_\xi+\frac{\varepsilon\e{4}}{1+\varepsilon\xi}(\varepsilon w\e{2}_\theta +1)\de{}{w\e{2}_\xi}-\frac{\varepsilon\e{2}}{1+\varepsilon\xi}(\varepsilon w\e{2}_\theta
+1)\e{2}=\\
\frac{1}{Re}\Big(\varepsilon^2\dx{(\frac{1}{1+\varepsilon\xi}\dx{((1+\varepsilon\xi) w\e{2}_\xi)})}+\frac{\varepsilon^4}{(1+\varepsilon\xi)^2}\de{2}{ w\e{2}_\xi}-\frac{2\varepsilon^3}{(1+\varepsilon\xi)^2}\de{}{ w\e{2}_\theta}\Big)-\dx{P\e{2}}\\
\varepsilon\e{2}\partial_t w\e{2}_\xi+\varepsilon\e{3}w\e{2}_\xi\partial_\xi w\e{2}_\theta+\frac{\varepsilon\e{2}}{1+\varepsilon\xi}(\varepsilon w\e{2}_\theta +1)\de{}{w\e{2}_\theta}-\frac{\varepsilon\e{3}}{1+\varepsilon\xi}w\e{2}_\xi(\varepsilon w\e{2}_\theta
+1)=\\
\frac{1}{Re}\Big(\dx{(\frac{1}{1+\varepsilon\xi}\dx{((1+\varepsilon\xi) w\e{2}_\theta)})}-\frac{\varepsilon}{(1+\varepsilon\xi)^2}+\frac{\varepsilon^2}{(1+\varepsilon\xi)^2}\de{2}{ w\e{2}_\theta}+\frac{2\varepsilon^3}{(1+\varepsilon\xi)^2}\de{}{ w\e{2}_\xi}\Big)-\frac{1}{(1
+\varepsilon\xi)}\de{}{P\e{2}}
\end{array} \right.
\end{displaymath}

and the boundary conditions are 
\begin{align}
&w_\xi^1(0,\theta)=0\quad\textit{and}\quad w_\theta^1(0,\theta)=0\label{cond1}\\
&(1+\varepsilon h)\varepsilon w_\xi^1-\de{}{h}(\varepsilon w_\theta^1+1)=(1+\varepsilon h)\varepsilon w_\xi^2-\de{}{h}(\varepsilon w_\theta^2+1)=(1+\varepsilon h)\partial_t h\label{cond2}\\
&\varepsilon^3\de{} h w_\xi^1+(1+\varepsilon h)(\varepsilon w_\theta^1+1)=\varepsilon^3\de{} h w_\xi^2+(1+\varepsilon h)(\varepsilon w_\theta^2+1)\label{cond3}\\
&(\sigma_{\xi\xi}]^2_1-\sigma_{\theta\theta}]^2_1)(1+\varepsilon h)\varepsilon\de{}{h}+\sigma_{\xi\theta}]^2_1((1+\varepsilon h)^2-\varepsilon^2\de{}{h}^2)=0\label{cond4}\\
&\sigma_{\xi\xi}]^2_1(1+\varepsilon h)^2+\varepsilon^2\de{}{h}^2\sigma_{\theta\theta}]^2_1-2(1+\varepsilon h)\varepsilon\de{}{h}\sigma_{\xi\theta}]^2_1=((1+\varepsilon h)^2+\varepsilon^2\de{}{h}^2)\gamma\kappa.\label{cond5}
\end{align}

where $\sigma_{\xi\xi}$, $\sigma_{\xi\theta}$ and $\sigma_{\theta\theta}$ are the coefficients  of the stress tensor:
\begin{displaymath}
\sigma_{\xi\xi}\e{i}=-\frac{1}{\varepsilon}P\e{i}+\frac{2c\e{i}\varepsilon}{Re}\dx{ w\e{i}_\xi},
\end{displaymath}
\begin{displaymath}
\sigma_{\xi\theta}\e{i}=\frac{c\e{i}}{Re}(\frac{\varepsilon^2}{1+\varepsilon h}\de{}{ w\e{i}_\xi}+\dx{ w\e{i}_\theta}-\frac{\varepsilon w\e{i}_\theta +1}{1+\varepsilon h}),
\end{displaymath}
\begin{displaymath}
\sigma_{\theta\theta}\e{i}=-\frac{1}{\varepsilon}P\e{i}+\frac{2c\e{i}}{Re(1+\varepsilon h)}(\varepsilon\de{}{ w\e{i}_\theta}+\varepsilon^2 w\e{i}_\xi),
\end{displaymath}
with 
\begin{displaymath}
c\e{i}=\begin{cases}
\mu\quad\textit{for}\quad i=1\\
1\quad\textit{for}\quad i=2
\end{cases}
\end{displaymath}
Furthermore, we know that the curvature $\kappa$ of the interface $r=1+\varepsilon h(\theta,t)$ is given by:
\begin{equation}\label{curvatura}
\kappa=\frac{2\varepsilon\e{2}(\de{}{h})\e{2}-\varepsilon(1+\varepsilon h)\de{2}{h}+(1+\varepsilon h)\e{2}}{\Big((1+\varepsilon h)\e{2}+\varepsilon\e{2}(\de{}{h})\e{2}\Big)\e{\frac{3}{2}}}
\end{equation}
If we keep only the terms of order 1, we have:
\begin{equation}\label{sistorden1}
\left\{ \begin{array}{ll}
\dx{ w\e{i}_\xi}+\de{}{ w\e{i}_{\theta}}=0\quad\textit{in}\quad\Omega\e{i}(t)\quad\textit{for}\quad i=1,2; \\
\dx{P\e{i}}=0\quad\textit{in}\quad\Omega\e{i}(t)\quad\textit{for}\quad i=1,2;\\
\frac{c\e{i}}{Re}\dxx{ w\e{i}_\theta}-\de{}{P\e{i}}=0\quad\textit{in}\quad\Omega\e{i}(t).
\end{array} \right.
\end{equation}
Thus we know that 
\begin{displaymath}
 w_\theta^i(\xi,\theta)=\frac{Re}{2c\e{i}}\de{}{P\e{i}(\theta)}\xi^2+A^i(\theta)\xi+B^i(\theta);
\end{displaymath}

\begin{displaymath}
 w_\xi^i(\xi,\theta)=-\frac{Re}{6c\e{i}}\de{2}{P\e{i}(\theta)}\xi^3-\de{}{A^i(\theta)}\frac{\xi^2}{2}-\de{}{B^i(\theta)}\xi+C^i(\theta);
\end{displaymath}

Since the volume filled by fluid 1 is very small, we can expect the velocity of the fluid 2 to be a small perturbation of the Taylor-Couette flow for a single fluid confined between two cylinders, i.e. $u_\theta(r)=D_1 r+\frac{D_2}{r}$ where $D_1=\frac{-1}{\eta\e{2}-1}$ and $D_2=\frac{\eta\e{2}}{\eta\e{2}-1}$.

 If we approximate the Taylor-Couette flow by its Taylor polynomial around $r=1$,
\begin{displaymath}
u_\theta(r)\sim 1+(D_1-D_2)(r-1)+2D_2(r-1)^2+\cdots
\end{displaymath}
Then if we do the change of variables, we get
\begin{displaymath}
 w_\theta^2\sim(D_1-D_2)\xi +2D_2\varepsilon\xi^2+\cdots
\end{displaymath}
Therefore, doing the matching we can deduce that $A^2(\theta)=(D_1 -D_2)=-\frac{1+\eta\e{2}}{\eta\e{2}-1}$.
 Moreover, we can do the matching with the pressure. In the Taylor-Couette flow, the pressure is constant, then in the leading order the pressure will be a constant too, i.e., $P^2=const$.

Now using boundary condition \eqref{cond1}, we get $B^1(\theta)=C^1(\theta)=0$.

If we consider condition \eqref{cond4}, we have $\sigma_{\xi\theta}]^2_1=0$, that is in the leading order, $$\frac{1}{Re}(\dx w_\theta^2 - 1)=\frac{\mu}{Re}(\dx w_\theta^1 -1).$$ From this equation and taking in to account that $\de{}{P^2}=0$ we have: 
\begin{displaymath}
A^1(\theta)=-\frac{Re}{\mu}\de{}{P^1}h-\frac{1}{\mu}\frac{1+\eta\e{2}}{\eta\e{2}-1}+\frac{\mu -1}{\mu}.
\end{displaymath}
Using condition \eqref{cond3}, we know that $ w_\theta^1= w_\theta^2$. Then, if we use the expression of $A^1(\theta)$, 
\begin{displaymath}
B^2(\theta)=-\frac{Re}{2\mu}\de{}{P^1}h^2+(1-\frac{1}{\mu})\frac{1+\eta\e{2}}{\eta\e{2}-1}h+\frac{\mu -1}{\mu}h
\end{displaymath}
In order to compute $C^2(\theta)$ we use condition \eqref{cond2}, that to the leading order becomes $ w_\xi^1 -\de{}{h} w_\theta^1= w_\xi^2 -\de{}{h} w_\theta^2$. Thus,
\begin{displaymath}
C^2(\theta)=-\frac{Re}{6\mu}\de{}(h^3\de{}{P^1})-\frac{1}{2}\de{}(A^1(\theta)h^2)-\frac{1+\eta\e{2}}{\eta\e{2}-1}\de{}{(\frac{h^2}{2})}+\de{}(B^2(\theta)h).
\end{displaymath}
If we consider condition \eqref{cond5}, we have $\sigma_{\xi\xi}]^2_1=\gamma\kappa$ therefore, $P^1-P^2=\varepsilon\gamma\kappa$. Since $P^2=cte$ and using \eqref{curvatura} we can approximate $\kappa\approx 1-\varepsilon(h+\de{2}{h})$, we have $$\de{}{P^1}=\varepsilon\gamma\de{}{\kappa}=-\gamma\varepsilon^2(\de{}{h}+\partial_\theta^3{h}).$$ 

Using condition \eqref{cond2} again we can see that evolution equation of the interface is
\begin{displaymath}
\partial_t h +(1-\varepsilon h)\de{}{h}-\varepsilon( w_\xi^1 -\de{}{h} w_\theta^1)=0.
\end{displaymath}
Substituting all above terms in the equation we have:
\begin{equation}\label{eqevol1}
\partial_t h+(1-\varepsilon h)\de{}{h}+\varepsilon\bigg(\frac{\gamma\varepsilon^2Re}{3\mu}\de{}{(h^3(\de{}{h}+\partial_\theta^3 h))}-\frac{1}{\mu}\frac{1+\eta\e{2}}{\eta\e{2}-1}\de{}(\frac{h^2}{2})+\frac{\mu-1}{\mu}\de{}(\frac{h^2}{2})\bigg)=0
\end{equation}

This equation can be simplified changing to a coordinate system that rotates at speed 1, in addition changing the unit of time and rescaling the function $h$. We also change the angle from $\theta$ to $-\theta$ in order to get a positive sign in the first order term. Taking $\gamma=\frac{b}{\varepsilon^2}$ and using the change of variables: 
\begin{equation}\label{rescale}
\begin{cases}
h(\theta,t)=\lambda\bar{h}(\bar{\theta},\bar{t});\\
\bar{\theta}=-\theta+t,\\
\lambda=\sqrt{\frac{6\eta\e{2}}{Re(\eta\e{2}-1)b}},\\
\bar{t}=\frac{2\eta\e{2}\lambda\varepsilon}{\mu(\eta\e{2}-1)}t,
\end{cases}
\end{equation}
equation \eqref{eqevol1} becomes 
\begin{equation*}
\bar{h}_{\bar{t}}+\partial_{\bar{\theta}}(\frac{\bar{h}^2}{2})+\partial_{\bar{\theta}}(\bar{h}^3(\partial_{\bar{\theta}}\bar{h}+\partial_{\bar{\theta}}^3 \bar{h}))=0.
\end{equation*}
In order to simplify the notation, we will remove the bars. Therefore, we will consider the following equation:
\begin{equation}\label{equevol}
h_t+\de{}(\frac{h^2}{2})+\de{}{(h^3(\de{}{h}+\partial_\theta^3 h))}=0.
\end{equation}
\begin{figure}[h]\label{thinfilmfuera}
\includegraphics[width=90mm]{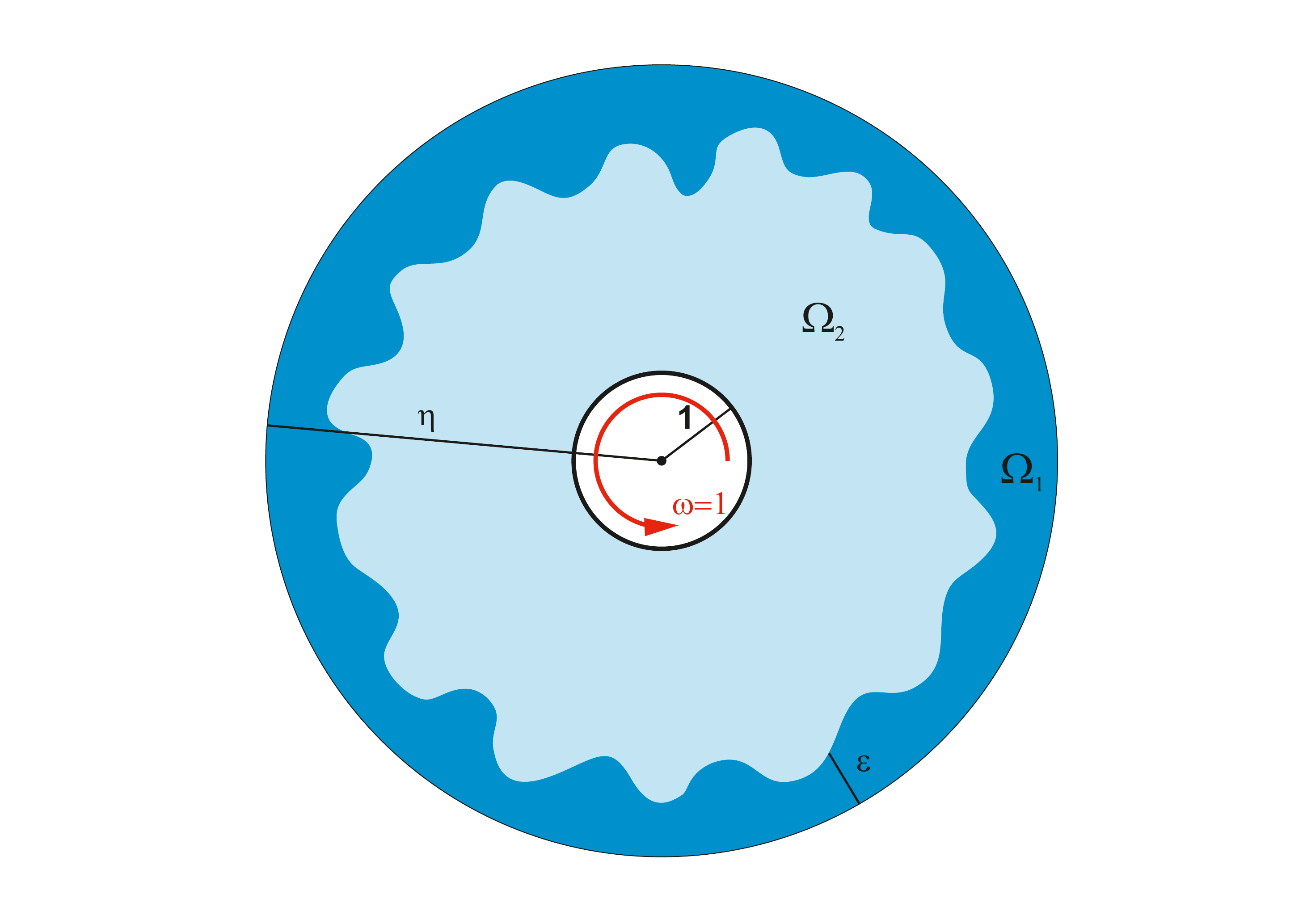}\caption{Taylor-Couette flow with a thin film layer near the external cylinder (in non-dimensional units)}
\end{figure}
\begin{nota}\label{fluidoexterno}
Other possible scenario is the case in which the thin fluid is near the external cylinder. We assume that the fluid 1 is the one filling the small volume fraction (see Figure \ref{thinfilmfuera}). The interface is parametrized by $r=\eta-\varepsilon h(\theta)$ and its curvature is $\kappa\approx\frac{1}{\eta}+\frac{\varepsilon}{\eta}(h+\de{2}{h})$. The resulting evolution equation of the free boundary is similar to one obtained in the previous case.
In order to derive this equation we use the following rescaling:
\begin{displaymath}
u_r\e{i}=\varepsilon^2 w\e{i}_{\xi};\quad u\e{i}_\theta =\varepsilon w\e{i}_\theta;\quad p\e{i}=\frac{1}{\varepsilon}P\e{i};\quad\textit{and}\quad\xi=\frac{r-\eta}{\varepsilon}.
\end{displaymath}
Then, using again polar coordinates, the equations for the fluid velocity \eqref{nsnodim} become:
\begin{displaymath}
\left\{ \begin{array}{ll}
\eta\dx{ w\e{2}_\xi}+\varepsilon\dx{(\xi w\e{2}_\xi)}+\de{}{ w\e{2}_{\theta}}=0\\
\zeta\Big(\varepsilon\e{4}\partial_t w\e{2}_\xi+\varepsilon\e{5}w\e{2}_\xi\partial_\xi w\e{2}_\xi+\frac{\varepsilon\e{5}}{\eta+\varepsilon\xi}w\e{2}_\theta \de{}{w\e{2}_\xi}-\frac{\varepsilon\e{3}}{\eta+\varepsilon\xi}(w\e{2}_\theta
)\e{2}\Big)=\\
\frac{\mu}{Re}\Big(\varepsilon^2\dx{(\frac{1}{\eta+\varepsilon\xi}\dx{((\eta+\varepsilon\xi) w\e{2}_\xi)})}+\frac{\varepsilon^4}{(\eta+\varepsilon\xi)^2}\de{2}{ w\e{2}_\xi}-\frac{2\varepsilon^3}{(\eta+\varepsilon\xi)^2}\de{}{ w\e{2}_\theta}\Big)-\dx{P\e{2}}\\
\zeta\Big(\varepsilon\e{2}\partial_t w\e{2}_\xi+\varepsilon\e{3}w\e{1}_\xi\partial_\xi w\e{2}_\theta+\frac{\varepsilon\e{3}}{\eta+\varepsilon\xi}w\e{2}_\theta \de{}{w\e{2}_\theta}-\frac{\varepsilon\e{4}}{\eta+\varepsilon\xi}w\e{2}_\xi w\e{2}_\theta
\Big)=\\
\frac{\mu}{Re}\Big(\dx{(\frac{1}{\eta+\varepsilon\xi}\dx{((\eta+\varepsilon\xi) w\e{2}_\theta)})}+\frac{\varepsilon^2}{(\eta+\varepsilon\xi)^2}\de{2}{ w\e{2}_\theta}+\frac{2\varepsilon^3}{(\eta+\varepsilon\xi)^2}\de{}{ w\e{2}_\xi}\Big)-\frac{1}{(\eta
+\varepsilon\xi)}\de{}{P\e{2}}
\end{array} \right.
\end{displaymath}
\begin{displaymath}
\left\{ \begin{array}{ll}
\eta\dx{ w\e{1}_\xi}+\varepsilon\dx{(\xi w\e{1}_\xi)}+\de{}{ w\e{1}_{\theta}}=0\\
\varepsilon\e{4}\partial_t w\e{1}_\xi+\varepsilon\e{5}w\e{1}_\xi\partial_\xi w\e{1}_\xi+\frac{\varepsilon\e{5}}{\eta+\varepsilon\xi}w\e{1}_\theta \de{}{w\e{1}_\xi}-\frac{\varepsilon\e{3}}{\eta+\varepsilon\xi}(w\e{1}_\theta
)\e{2}=\\
\frac{1}{Re}\Big(\varepsilon^2\dx{(\frac{1}{\eta+\varepsilon\xi}\dx{((\eta+\varepsilon\xi) w\e{1}_\xi)})}+\frac{\varepsilon^4}{(\eta+\varepsilon\xi)^2}\de{2}{ w\e{1}_\xi}-\frac{2\varepsilon^3}{(\eta+\varepsilon\xi)^2}\de{}{ w\e{1}_\theta}\Big)-\dx{P\e{1}}\\
\varepsilon\e{2}\partial_t w\e{1}_\xi+\varepsilon\e{3}w\e{1}_\xi\partial_\xi w\e{1}_\theta+\frac{\varepsilon\e{3}}{\eta+\varepsilon\xi}w\e{1}_\theta \de{}{w\e{1}_\theta}-\frac{\varepsilon\e{4}}{\eta+\varepsilon\xi}w\e{1}_\xi w\e{1}_\theta
=\\
\frac{1}{Re}\Big(\dx{(\frac{1}{\eta+\varepsilon\xi}\dx{((\eta+\varepsilon\xi) w\e{1}_\theta)})}+\frac{\varepsilon^2}{(\eta+\varepsilon\xi)^2}\de{2}{ w\e{1}_\theta}+\frac{2\varepsilon^3}{(\eta+\varepsilon\xi)^2}\de{}{ w\e{1}_\xi}\Big)-\frac{1}{(\eta
+\varepsilon\xi)}\de{}{P\e{1}}
\end{array} \right.
\end{displaymath}
and the boundary conditions \eqref{nscond1}-\eqref{nscond5} are given by:
\begin{displaymath}
\begin{cases}
w_\xi^1(0,\theta)=0\quad\textit{and}\quad w_\theta^1(0,\theta)=0,\\
(\eta-\varepsilon h)\varepsilon w_\xi^1+\de{}{h}(\varepsilon w_\theta^1+1)=(\eta-\varepsilon h)\varepsilon w_\xi^2+\de{}{h}(\varepsilon w_\theta^2+1)=(\eta-\varepsilon h)\partial_t h\quad\textit{for}\quad\xi=-h,\\
-\varepsilon^3\de{} h w_\xi^1+(\eta-\varepsilon h)(\varepsilon w_\theta^1+1)=-\varepsilon^3\de{} h w_\xi^2+(\eta-\varepsilon h)(\varepsilon w_\theta^2+1)\quad\textit{for}\quad\xi=-h,\\
(\sigma_{\xi\xi}]^2_1-\sigma_{\theta\theta}]^2_1)(\eta-\varepsilon h)\varepsilon\de{}{h}+\sigma_{\xi\theta}]^2_1(\varepsilon^2\de{}{h}^2-(\eta-\varepsilon h)^2)=0\quad\textit{for}\quad\xi=-h,\\
\sigma_{\xi\xi}]^2_1(\eta-\varepsilon h)^2+\varepsilon^2\de{}{h}^2\sigma_{\theta\theta}]^2_1+2(\eta-\varepsilon h)\varepsilon\de{}{h}\sigma_{\xi\theta}]^2_1=((\eta-\varepsilon h)^2+\varepsilon^2\de{}{h}^2)\gamma\kappa\quad\textit{for}\quad\xi=-h.
\end{cases}
\end{displaymath}

where $\sigma_{\xi\xi}$, $\sigma_{\xi\theta}$ and $\sigma_{\theta\theta}$ are the coefficients  of the stress tensor:
\begin{displaymath}
\sigma_{\xi\xi}\e{i}=-\frac{1}{\varepsilon}P\e{i}+\frac{2c\e{i}\varepsilon}{Re}\dx{ w\e{i}_\xi},
\end{displaymath}
\begin{displaymath}
\sigma_{\xi\theta}\e{i}=\frac{c\e{i}}{Re}(\frac{\varepsilon^2}{\eta-\varepsilon h}\de{}{ w\e{i}_\xi}+\dx{ w\e{i}_\theta}-\frac{\varepsilon w\e{i}_\theta +1}{\eta-\varepsilon h}),
\end{displaymath}
\begin{displaymath}
\sigma_{\theta\theta}\e{i}=-\frac{1}{\varepsilon}P\e{i}+\frac{2c\e{i}}{Re(\eta-\varepsilon h)}(\varepsilon\de{}{ w\e{i}_\theta}+\varepsilon^2 w\e{i}_\xi),
\end{displaymath}
with 
\begin{displaymath}
c\e{i}=\begin{cases}
\mu\quad\textit{for}\quad i=2\\
1\quad\textit{for}\quad i=1
\end{cases}
\end{displaymath}
Collecting the terms of order 1, we obtain a system similar to \eqref{sistorden1}:
\begin{displaymath}
\left\{ \begin{array}{ll}
\eta\dx{ w\e{i}_\xi}+\de{}{ w\e{i}_{\theta}}=0\\
\dx{P\e{i}}=0\\
\dxx{ w\e{i}_\theta}-\frac{Re}{\eta c\e{i}}\de{}{P\e{i}}=0
\end{array} \right.
\end{displaymath}
where the only difference is the onset of the non-dimensional radius $\eta$ in the last equation. 
Arguing then similarly as in the derivation of \eqref{eqevol1}, we arrive at:
\begin{displaymath}
\partial_t h-(\frac{1}{\eta}+\frac{\varepsilon h}{\eta\e{2}})\de{}{h}+\varepsilon\bigg(\frac{\gamma\varepsilon^2Re}{3\eta\e{3}}\de{}{(h^3(\de{}{h}+\partial_\theta^3 h))}+\frac{(1-\mu)(\eta\e{2}-1)-\mu\eta(1+\eta\e{2})}{\eta\e{2}(\eta\e{2}-1)}\de{}(\frac{h^2}{2})\bigg)=0
\end{displaymath}
Taking $\gamma=\frac{b}{\varepsilon\e{2}}$ and making the change of variables 
\begin{equation}\label{reescalefuera}
\begin{cases}
h(\theta,t)=\lambda\bar{h}(\bar{\theta},\bar{t});\\
\bar{\theta}=-\theta-\frac{t}{\eta},\\
\bar{t}=\frac{\mu\eta(1+\eta\e{2})+\mu(\eta\e{2}-1)}{\eta\e{2}(\eta\e{2}-1)}\varepsilon\lambda t,\\
\lambda=\sqrt{\frac{3(\mu\eta(1+\eta\e{2})+\mu(\eta\e{2}-1))}{b\eta Re(\eta\e{2}-1)}}
\end{cases}
\end{equation}
and removing again the bars, we obtain \eqref{equevol}

\end{nota}
\subsection{Case $\gamma\gg\frac{1}{\varepsilon\e{2}}$}
If we consider the case in which the non-dimensional surface tension is much larger than $\frac{1}{\varepsilon\e{2}}$ it is natural to use, instead of \eqref{rescale} the following change of variables:
\begin{equation}\label{reescalegrande}
\begin{cases}
h(\theta,t)=\lambda\bar{h}(\bar{\theta},\bar{t});\\
\bar{\theta}=-\theta+t,\\
\lambda=\sqrt{\frac{6\eta\e{2}}{Re(\eta\e{2}-1)}},\\
\bar{t}=\frac{2\eta\e{2}\lambda\varepsilon\e{3}\gamma}{\mu(\eta\e{2}-1)}t,
\end{cases}
\end{equation}

Thus, equation \eqref{eqevol1} becomes:

\begin{equation}\label{eqevtsgrande1}
h_t+\frac{1}{\gamma\varepsilon\e{2}}\de{}{(\frac{h\e{2}}{2})}+\de{}{(h^3(\de{}{h}+\partial_\theta^3 h))}=0,
\end{equation}
that, in the limit when $\varepsilon\to 0$ formally yields:
\begin{equation}\label{eqevtsgrande}
h_t+\de{}{(h^3(\de{}{h}+\partial_\theta^3 h))}=0.
\end{equation}
\section{Stationary solutions and travelling waves of the equations \eqref{equevol} and \eqref{eqevtsgrande}}\label{S4}

The steady states of \eqref{equevol} are the solutions of the ODEs:
\begin{equation}\label{ode1}
\frac{h\e{2}}{2}+h^3(\de{}{h}+\partial_\theta^3 h)=J,\quad\textit{for some}\quad J\in\R
\end{equation}

and those of \eqref{eqevtsgrande} are the solutions of:
\begin{equation}\label{ode2}
h^3(\de{}{h}+\partial_\theta^3 h)=J,\quad\textit{for some}\quad J\in\R.
\end{equation}
In both cases we must have in addition 
\begin{equation}\label{odebc}
h(\theta+2\pi)=h(\theta)\quad\textit{for}\quad\theta\in\R.
\end{equation}
The parameter $J$ in \eqref{ode1} and \eqref{ode2}, can be interpreted as the flux of fluid 1 through any radius of the external cylinder. In the case of the steady states this flux is the same for every radius.

Travelling wave solutions of \eqref{equevol} or \eqref{eqevtsgrande} are solutions with the form 
\begin{equation}\label{travelscale}
h(\theta,t)=h(\theta-ct)\quad\textit{for some}\quad c\in\R
\end{equation}
Therefore, in the case of equation \eqref{equevol}, $h$ solves 
\begin{equation}\label{travel1}
-ch+\frac{h\e{2}}{2}+h^3(\de{}{h}+\partial_\theta^3 h)=J,\quad\textit{for some}\quad J\in\R
\end{equation}
and in the case of \eqref{eqevtsgrande},
\begin{equation}\label{travel2}
-ch+h^3(\de{}{h}+\partial_\theta^3 h)=J,\quad\textit{for some}\quad J\in\R.
\end{equation}
In both cases $h$ satisfies \eqref{odebc}.

From now on, we will use the following notation. We will denote as $H\e{k}(\T)$ for $k\in\{0,1,2,...\}$ the closure in $H\e{k}(0,2\pi)$ of the restriction of the functions in $C\e{\infty}(\R)$ satisfying \eqref{odebc}. We denote as $L\e{\infty}(\T)$ the space $L\e{\infty}(0,2\pi)$. 

Moreover, we will assume that the Hilbert spaces $H\e{k}(\T)$, with their natural scalar product, are always real spaces of real functions. However, in order to simplify the notation we will consider then as closed subspaces of the complex Hilbert spaces $H\e{k}(\T;\C)$. In particular we can represent any $f\in H\e{k}(\T)$ as 
\begin{equation}\label{espacioswzeta}
f(\theta)=\sum_{n=-\infty}\e{\infty}a_n e\e{in\theta}\quad\textit{with}\quad a_n=\overline{a_n}.
\end{equation}

We will need also the homogeneous Sobolev spaces $\dot{H}\e{k}(\T)$ which are the set of functions $f\in H\e{k}(\T)$ such that $\int_{\T}f=0$.

Our goal is to study the steady state problems $[$\eqref{ode1}, \eqref{odebc}$]$, $[$\eqref{ode2}, \eqref{odebc}$]$, $[$\eqref{travel1}, \eqref{odebc}$]$ and $[$\eqref{travel2}, \eqref{odebc}$]$. We first remark that each of these problems can be understood in two natural different ways: 
\begin{enumerate}
\item Existence and uniqueness for these problem for a given value of $J$ (cf. Proposition \ref{prop} and Proposition \ref{proptravel}).
\item To obtain solutions for each of the problems for some $J$ (whose determination is part of the problem) assuming that $\int_{\T}hd\theta$ is given (cf. Proposition \ref{prop3}).
\end{enumerate}  
The following propositions we will address these two type of questions. In particular, we will derive necessary and sufficient conditions on $J$ in order to have solvability of the problems of type (1). On the other hand, we will obtain several near constant uniqueness results. By this we mean that a solution $h$ of one of the previous problems is close to a constant solution $h_*$ then $h=h_*$.

 Notice that if the function $h$ on the right hand side of \eqref{travelscale} is constant, then all the functions $h(\theta,t)$ are the same for all values of $c$. It is then natural to ask why we are studying the different steady state problems for different values of $c$. The reason is that the uniqueness for near constant solutions results that we will prove in the following propositions imply that there are not near constant travelling waves for any value of the velocity $c$. 

Concerning the stationary solutions, the following results holds:
\begin{prop}\label{prop}
The problem \eqref{ode1}, \eqref{odebc} has positive solutions $h\in C\e{3}([0,2\pi])$ if and only if $J>0$. For any $J>0$, there is a constant solution of \eqref{ode1}, \eqref{odebc} given by
\begin{equation}\label{estac1}
h(\theta)=\sqrt{2J}.
\end{equation} 

Moreover, for any $L>0$ with $\frac{1}{L}\leq J\leq L$ there exists $\varepsilon=\varepsilon(L)$ such that the only solution of \eqref{ode1}, \eqref{odebc} satisfying $\norm{h-\sqrt{2J}}_{L\e{\infty}(\T)}<\varepsilon$ is \eqref{estac1}.

The problem \eqref{ode2}, \eqref{odebc} has positive solutions if and only if $J=0$. The corresponding solutions of \eqref{ode2} are given by
\begin{equation}\label{estac2}
h(\theta)=c_1+c_2\sin(\theta-\theta_0),
\end{equation}
where $\abs{c_2}<c_1$, for $c_1>0$, $c_2\in\R$ and $\theta_0\in\R.$

\end{prop}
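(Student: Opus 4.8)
The plan is to treat the four assertions of the proposition separately, relying on energy-type identities obtained by multiplying the ODEs by suitable test functions and integrating over $\T$.

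\emph{Necessity of $J>0$ for \eqref{ode1}.} First I would integrate \eqref{ode1} over $\T$. Since $\int_\T \de{}{(\,\cdot\,)}=0$, the term $h^3(\de{}{h}+\partial_\theta^3 h)$ has a primitive whose derivative we can exhibit: writing $h^3\de{}{h}=\de{}{(h^4/4)}$ and $h^3\partial_\theta^3 h=\de{}{(h^3\partial_\theta^2 h)}-3h^2\de{}{h}\,\partial_\theta^2 h$, and then $3h^2\de{}{h}\,\partial_\theta^2 h=\de{}{(3h^2(\de{}{h})^2/2)}-3h(\de{}{h})^3$, one reduces $\int_\T h^3(\de{}{h}+\partial_\theta^3 h)\,d\theta$ to $\int_\T(\,\ldots\,)$ with no exact derivatives left. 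Integrating \eqref{ode1} gives $\frac12\int_\T h^2\,d\theta + (\text{remainder}) = 2\pi J$; since $h>0$ the leading term is strictly positive. To get the clean conclusion I expect one actually wants to multiply \eqref{ode1} by $1/h$ (legitimate since $h>0$, $h\in C^3$): then $\int_\T \frac{h}{2}\,d\theta + \int_\T h^2(\de{}{h}+\partial_\theta^3 h)\,d\theta = J\int_\T \frac{d\theta}{h}$, and one checks $\int_\T h^2(\de{}{h}+\partial_\theta^3 h)\,d\theta = \int_\T h^2\de{}{h}\,d\theta + \int_\T h^2\partial_\theta^3 h\,d\theta$; the first vanishes ($=\int\de{}{(h^3/3)}$) and the second, after integrating by parts, equals $\int_\T 2h(\de{}{h})(\partial_\theta^2 h)\,d\theta=-\int_\T (\de{}{h})^3\,d\theta$, which is \emph{not} obviously signed. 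So the honest route is: multiply by $1/h^3$ — this annihilates the nonlinear term entirely, giving $\int_\T \frac{d\theta}{2h} = J\int_\T \frac{d\theta}{h^3}$, whence $J>0$ immediately because both integrands are positive. Sufficiency is free: for $J>0$ the constant $h\equiv\sqrt{2J}$ solves \eqref{ode1} and \eqref{odebc}.

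\emph{Near-constant uniqueness for \eqref{ode1}.} Here I would linearise \eqref{ode1} around $h_*=\sqrt{2J}$. Writing $h=h_*+v$ with $v$ small in $L^\infty$, subtract the constant equation: the linear part is $h_* v + h_*^3(\de{}{v}+\partial_\theta^3 v)=0$ plus quadratic-and-higher terms in $v$. Expanding $v=\sum_n a_n e^{in\theta}$, the linear operator has symbol $h_* + h_*^3(in - in^3)=h_* + i h_*^3 n(1-n^2)$; this vanishes only at $n=0$ and $n=\pm1$ (where $n(1-n^2)=0$), so on the complement of $\mathrm{span}\{1,e^{\pm i\theta}\}$ the operator is invertible with bounded inverse. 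The three-dimensional kernel corresponds exactly to the constant mode and to $\sin(\theta-\theta_0)$. To close the argument I would use the conservation laws again: integrating \eqref{ode1} fixes $\int_\T h$ in terms of $J$ up to the higher-order corrections, and multiplying \eqref{ode1} by $e^{\pm i\theta}$ (or $\cos\theta,\sin\theta$) kills the would-be $n=\pm1$ resonant directions because $\int_\T e^{\pm i\theta}\cdot(\text{exact derivative})=0$ forces the coefficients of those modes to be controlled by the nonlinearity; a fixed-point/implicit-function argument in $\dot H^3(\T)$ (splitting off the neutral modes) then yields $v=0$ for $\varepsilon=\varepsilon(L)$ small, uniformly for $1/L\le J\le L$. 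The main obstacle is this step: handling the two genuinely resonant Fourier modes $n=\pm1$. The cleanest device is to note that the quadratic nonlinearity $\de{}{(v^2/2)}$ together with the structure of $h^3\partial_\theta(\cdots)$ produces, at the resonant modes, terms that are quadratically small, while the nonresonant part of $v$ is linearly controlled by them; iterating forces everything to zero.

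\emph{The equation \eqref{ode2}.} Multiplying \eqref{ode2} by $1/h^3$ (valid since $h>0$) gives $\de{}{h}+\partial_\theta^3 h = J/h^3$; integrating over $\T$ gives $0 = J\int_\T h^{-3}\,d\theta$, so $J=0$. Then \eqref{ode2} reduces to $\de{}{h}+\partial_\theta^3 h=0$ on $\T$, a constant-coefficient linear ODE whose $2\pi$-periodic solutions are spanned by $1$, $\cos\theta$, $\sin\theta$ (the symbol $in-in^3$ vanishes precisely at $n=0,\pm1$). Hence $h(\theta)=c_1+c_2\sin(\theta-\theta_0)$, and positivity of $h$ on all of $\T$ is equivalent to $c_1>0$ and $\abs{c_2}<c_1$ (the case $\abs{c_2}=c_1$ gives $h$ vanishing at a point, which is excluded by $h>0$). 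This completes the four claims.
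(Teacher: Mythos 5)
Your treatment of the ``$J>0$'' and ``$J=0$'' claims is correct and in fact slightly cleaner than the paper's: dividing \eqref{ode1} by $h^3$ kills the whole term $h^3(\de{}h+\de{3}h)$ upon integration over $\T$, whereas the paper divides by $h^2$ and then must integrate $\int_\T h\,\de{3}h\,d\theta$ by parts twice to see it vanish. Both routes yield $J\int_\T h^{-p}\,d\theta$ equal to a positive (resp.\ zero) constant, hence $J>0$ (resp.\ $J=0$). The characterisation of the positive periodic solutions of $\de{}h+\de{3}h=0$ is also fine.

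The near-constant uniqueness step, however, contains a genuine error and does not close. You linearise and obtain the operator $DF(h_*)(v)=h_*v+h_*^3(\de{}v+\de{3}v)$ with Fourier symbol $h_*+ih_*^3n(1-n^2)$, and then assert that this symbol ``vanishes only at $n=0,\pm1$,'' concluding the operator has a three-dimensional kernel and that the modes $n=\pm1$ are ``genuinely resonant.'' This is false: at $n=0,\pm1$ only the \emph{imaginary} part $n(1-n^2)$ vanishes, and the symbol itself equals $h_*>0$ there. Indeed $\abs{h_*+ih_*^3n(1-n^2)}\geq h_*$ for every $n$, and it grows like $h_*^3\abs{n}^3$ for large $\abs{n}$, so $DF(h_*):H^3(\T)\to L^2(\T)$ is bounded and boundedly invertible. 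Consequently there is nothing to split off: the Taylor estimate $\norm{DF(c)(h-c)}_{L^2}\leq C\norm{h-c}_{H^3}^2$ together with bounded invertibility gives $\norm{h-c}_{H^3}\leq C\norm{h-c}_{H^3}^2$, forcing $h=c$ for $\norm{h-c}_{H^3}$ small. This is precisely the paper's argument, and it is a direct inverse-function-theorem application with no Lyapunov--Schmidt reduction. The resonance machinery you sketch (and flag as ``the main obstacle'') is what one genuinely needs for problems like $[\eqref{ode2},\eqref{odebc}]$ or the travelling-wave equation \eqref{travel1} — where the linearised symbol $in(1-n^2)$, or $(h_*-c)+ih_*^3n(1-n^2)$ with $h_*$ close to $c$, really does become small at $n=0,\pm1$, which is exactly why \eqref{ode2} has a two-parameter family of non-constant solutions and why Proposition \ref{proptravel} requires the projection argument — but it is not needed, nor correct, for \eqref{ode1}.
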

\begin{nota}
The role of $L$ in this Proposition, as well as the remaining results in this section, is to define the range of the parameter $J$ (or eventually other parameters appearing in the corresponding problems) for which the size of the admissible perturbations, that is measured by $\varepsilon$, is not too small. It will be clear from the proof of Proposition \ref{prop} that a given $J$, $\varepsilon$ cannot be expected to be larger than $C\sqrt{J}$.
\end{nota}
\begin{proof}
We rewrite \eqref{ode1}, using that $h>0$, as:
\begin{equation}\label{ode11}
h\de{}h+h\de{3}h=\frac{J}{h\e{2}}-\frac{1}{2}
\end{equation}

Integrating by parts in the second term on the left hand side of \eqref{ode11} in $[0,2\pi]$ and using that \eqref{odebc} and \eqref{ode1} imply that $\de{j}h(\theta+2\pi)=\de{j}h(\theta)$ for $j=1,2,3,...$; we obtain:

\begin{align}\label{contradic1}
0=\int_{\T}\de{}(\frac{h\e{2}}{2})d\theta-\int_{\T}\de{}\Big(\frac{(\de{}h)\e{2}}{2}\Big)d\theta=\int_{\T}\frac{J}{h\e{2}}d\theta-\int_{\T}\frac{1}{2}d\theta.
\end{align}
where we use the notation $\int_\T\cdot d\theta$ to denote integration in $[0,2\pi]$ with periodic boundary conditions.
Equation \eqref{contradic1} yields a contradiction if $J\leq 0$. Therefore, \eqref{ode1}, \eqref{odebc} have positive solutions only if $J>0$. It is easy to check that $h(\theta)=\sqrt{2J}$ is a solution of \eqref{ode1}, \eqref{odebc}. Then, it only remains to prove that this is the unique solution satisfying $\norm{h-\sqrt{2J}}_{L\e{\infty}(\T)}<\varepsilon$, if $\varepsilon$ is sufficient small.

In order to prove that, we define the functional $F:H\e{3}(\T)\to L\e{2}(\T)$ by means of:
$$F(h)\equiv \frac{h\e{2}}{2}+h\e{3}(\de{}h+\de{3}h).$$

 Then \eqref{ode1}, \eqref{odebc} imply 
 \begin{equation}\label{functional1}
 F(h)=J.
 \end{equation}
 
 The Taylor expansion of $F$ at $c=\sqrt{2J}$ gives,
\begin{equation}\label{taylorode1}
F(h)=F(c)+DF(c)(h-c)+\mathcal{O}(\norm{h-c}_{H\e{3}(\T)}\e{2})\quad\textit{as}\quad \norm{h-c}_{H\e{3}(\T)}\to 0
\end{equation}

where $DF(c):H\e{3}(\T)\to L\e{2}(\T)$ is given by $DF(c)(g)=cg+c\e{3}(\de{}g+\de{3}g)$.


The operator $DF(c)$ is diagonal in the Fourier bases. Indeed if we write $g=\sum_{\ell=-\infty}\e{\infty}a_\ell e\e{i\ell\theta}$ and $DF(c)(g)=\sum_{\ell=-\infty}\e{\infty}\Big(DF(c)(g)\Big)_\ell e\e{i\ell\theta}$ with

\begin{displaymath}
\Big(DF(c)(g)\Big)_\ell=(c+c\e{3}i(\ell-\ell\e{3}))a_\ell
\end{displaymath}
Therefore, we have that $F(c)=J$ and $DF(c)$ is invertible, then the uniqueness of the solution $h$ of \eqref{ode1}, \eqref{odebc} stated in the Proposition follows from an Inverse Function Theorem argument. 
Indeed using \eqref{functional1}, \eqref{taylorode1} we have,
\begin{displaymath}
\norm{DF(c)(h-c)}_{L\e{2}(\T)}\leq C\norm{h-c}\e{2}_{H\e{3}(\T)}
\end{displaymath}

then, the invertibility of $DF(c)$ implies:

\begin{displaymath}
\norm{h-c}_{H\e{3}(\T)}\leq C\norm{h-c}\e{2}_{H\e{3}(\T)}
\end{displaymath}
when $h=c$ if $\norm{h-c}_{H\e{3}(\T)}\leq\varepsilon$ with $\varepsilon$ sufficient small.
On the other hand, standard ODE arguments imply that $\norm{h-c}_{H\e{3}(\T)}\leq C\norm{h-c}_{L\e{\infty}(\T)}$ if $\norm{h-c}_{L\e{\infty}(\T)}\leq \frac{c}{2}$, whence  the uniqueness result stated in the Proposition follows.

Suppose that $h$ is a positive solution of the problem \eqref{ode2}, \eqref{odebc}. We rewrite \eqref{ode2} as
\begin{equation}\label{ode21}
h\de{}h+h\de{3}h=\frac{J}{h\e{2}}
\end{equation}

Using integration by parts in the second term of the left hand side of the equation \eqref{ode21} we deduce:
\begin{displaymath}
0=\int_{\T}\de{}(\frac{h\e{2}}{2})d\theta-\int_{\T}\de{}\Big(\frac{(\de{}h)\e{2}}{2}\Big)d\theta=\int_{\T}\frac{J}{h\e{2}}d\theta.
\end{displaymath} 

Hence, $J=0$. Therefore, \eqref{ode2} reduces to $\de{}h+\de{3}h=0$ whose most general solution satisfying $h>0$, is given by \eqref{estac2}.

\end{proof}

We recall that the interface has been parametrized in polar coordinates by $r=1+\varepsilon h(\theta,t)$. In addition to this, we make the changes of coordinates to a rotating coordinate system (cf. \eqref{rescale}, \eqref{reescalefuera} and \eqref{reescalegrande}). We will refer to the coordinate systems as original and rotating respectively.

Geometrically the interface described by the solution \eqref{estac1} is in both coordinate systems a circle centred at the origin. Notice that origin is also the center of the internal and external cylinders enclosing the flow.

The solutions \eqref{estac2}, if $c_2\neq 0$, describe in the rotating coordinate system circles whose center is different from the origin. In the original coordinate system, these solutions are circles whose center is separated from the origin and rotates with constant angular speed around the origin. If $c_2=0$, the solutions \eqref{estac2} are circles centred at the origin in both coordinate systems.

\begin{prop}\label{proptravel}
\hspace{2cm}
\begin{enumerate}
\item[(a)] For all $h_*>0$, $h_*\in\R$ and any $c\in\R$ the constant function $h=h_*$ is solution of the problem \eqref{travel1}, \eqref{odebc} with $J=-ch_*+\frac{h_*\e{2}}{2}$. Moreover, for any $L>0$ there exists $\varepsilon=\varepsilon(L)>0$ such that for each $c\in[-L,L]$, $J\in [-\frac{c\e{2}}{2},L]$, $\frac{1}{L}\leq h_*\leq L$ and any $h$ solution of \eqref{travel1}, \eqref{odebc} satisfying $\norm{h-h_*}_{L\e{\infty}}\leq\varepsilon$ we have that $h=\tilde{h}\in\mathcal{S}_{\varepsilon,h_*,c}$  where $\mathcal{S}_{\varepsilon,h_*,c}=\{x\in[h
_*-\varepsilon,h_*+\varepsilon]:-cx+\frac{x\e{2}}{2}=-ch_*+\frac{h_*\e{2}}{2}\}$.
\item[(b)] For all $h_*>0$, $h_*\in\R$ and any $c\in\R$, the constant function $h=h_*$ is solution of the problem \eqref{travel2}, \eqref{odebc} with $J=-ch_*$. Moreover, for any $L>0$ there exists $\varepsilon=\varepsilon(L)>0$ such that for each $J\in[-L,L]$, $\frac{1}{L}\leq c \leq L$ and any $h$ solution of the problem \eqref{travel2}, \eqref{odebc} satisfying $\norm{h-h_*}_{L\e{\infty}}\leq\varepsilon$ we have that $h=h_*$.
\end{enumerate}
\end{prop}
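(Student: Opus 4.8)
The plan is to follow the two-step scheme used in the proof of Proposition~\ref{prop}: first check by inspection that the constant function is a solution with the indicated flux, and then extract the near-constant rigidity from a single energy-type identity. Substituting $h\equiv h_*$ into \eqref{travel1} annihilates $\de{}h$ and $\de{3}h$ and leaves $-ch_*+\tfrac{h_*^2}{2}=J$; substituting it into \eqref{travel2} leaves $-ch_*=J$. This gives the first sentence of each of (a) and (b). The argument below will in fact show that \emph{every} positive $2\pi$-periodic solution of \eqref{travel1} (resp.\ of \eqref{travel2} with $c\neq0$) is one of these constants, so the parameters $\varepsilon,L$ enter only to keep $h$ positive and to place the resulting constant within $\varepsilon$ of $h_*$.

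For the rigidity in (a), let $h\in C^3([0,2\pi])$ be a positive solution of \eqref{travel1}, \eqref{odebc}. Since $h>0$ we may divide \eqref{travel1} by $h^3$ to obtain
\begin{equation*}
\de{}h+\de{3}h=\frac{J}{h^3}+\frac{c}{h^2}-\frac{1}{2h}.
\end{equation*}
I would multiply this identity by $\de{}h$ and integrate over $\T$. Each term on the right-hand side is a perfect $\theta$-derivative, since $\de{}h/h^3=\de{}(-\tfrac{1}{2h^2})$, $\de{}h/h^2=\de{}(-\tfrac1h)$ and $\de{}h/h=\de{}(\log h)$, and these are $C^2$ periodic functions because $h\in C^3$ is positive and periodic together with its derivatives by \eqref{odebc}; hence the right-hand side integrates to zero. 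On the left, one integration by parts gives $\int_\T\de{}h\,\de{3}h\,d\theta=-\int_\T(\de{2}h)^2\,d\theta$ with vanishing boundary terms, so there remains
\begin{equation*}
\int_\T(\de{}h)^2\,d\theta=\int_\T(\de{2}h)^2\,d\theta.
\end{equation*}

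Since $\de{}h$ has zero mean, the Wirtinger (Poincar\'e) inequality on the circle gives $\int_\T(\de{}h)^2\le\int_\T(\de{2}h)^2$, and its equality case forces $\de{}h$ to contain only the $\ell=\pm1$ Fourier modes; hence $h(\theta)=c_1+c_2\sin(\theta-\theta_0)$. For such an $h$ one checks $\de{}h+\de{3}h\equiv0$, so \eqref{travel1} collapses to the pointwise relation $-ch+\tfrac{h^2}{2}=J$; expanding $h^2$, the only $\cos(2(\theta-\theta_0))$ contribution is $-\tfrac{c_2^2}{4}\cos(2(\theta-\theta_0))$, which must vanish, so $c_2=0$ and $h\equiv\tilde h:=c_1$. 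Since $-c\tilde h+\tfrac{\tilde h^2}{2}=J=-ch_*+\tfrac{h_*^2}{2}$ and $\norm{h-h_*}_{L^\infty}\le\varepsilon$, we get $|\tilde h-h_*|\le\varepsilon$, i.e.\ $\tilde h\in\mathcal{S}_{\varepsilon,h_*,c}$, which is assertion (a). Part (b) is the same computation with the term $-\tfrac{1}{2h}$ absent: again $h(\theta)=c_1+c_2\sin(\theta-\theta_0)$ and $\de{}h+\de{3}h\equiv0$, so \eqref{travel2} reduces to $-ch=J$ pointwise; because $\tfrac1L\le c\le L$ we have $c\neq0$, so the $\sin(\theta-\theta_0)$ coefficient $-cc_2$ must vanish, forcing $c_2=0$ and then $c_1=-J/c=h_*$.

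The whole argument is elementary; the only steps that need genuine care are verifying that the right-hand side of the divided equation integrates to zero over a period (this is precisely where positivity of $h$ and periodicity of its derivatives are used) and quoting the equality case of the Wirtinger inequality correctly, so I do not foresee a real obstacle. One could alternatively imitate the implicit function theorem argument of Proposition~\ref{prop} with $F(h)=-ch+\tfrac{h^2}{2}+h^3(\de{}h+\de{3}h)$, whose linearization at $h_*$ is diagonal in Fourier with symbol $h_*-c+h_*^3\,i(\ell-\ell^3)$; but that symbol degenerates on the modes $\ell=0,\pm1$ exactly when $h_*=c$, which is the reason statement (a) must allow the two-point set $\mathcal{S}_{\varepsilon,h_*,c}$ instead of a single constant — the energy identity above bypasses this case distinction.
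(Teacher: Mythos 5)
Your proof is correct, but it follows a genuinely different route from the paper's. The paper treats (a) perturbatively, in the spirit of a Lyapunov--Schmidt reduction: it writes $h-h_*=a_0+w+\zeta$ with $w$ in the $\pm1$ Fourier modes, inverts $P_1DF\,P_1$ on the complementary modes to solve $\zeta=\mathcal{O}(\norm{w}^{2})$, and then shows that the projection onto the $\pm1$ modes yields a cubic relation $(\tilde{h}-c)a_1+\alpha\abs{a_1}^{2}a_1=\mathcal{O}(\abs{a_1}^{4})$ whose coefficient $\alpha$ cannot be small, forcing $a_1=0$; part (b) is then a direct inverse-function-theorem argument since $DF(h_*)$ is invertible for $c\neq0$. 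You instead divide by $h^{3}$, multiply by $\de{}h$ and integrate over a period: the right-hand side is an exact derivative of a periodic function, and the resulting identity $\int_{\T}(\de{}h)^{2}d\theta=\int_{\T}(\de{2}h)^{2}d\theta$, combined with the equality case of Wirtinger's inequality for the zero-mean function $\de{}h$, forces $h=c_1+c_2\sin(\theta-\theta_0)$; then $\de{}h+\de{3}h\equiv0$ degenerates the equation to a pointwise algebraic relation that kills $c_2$ (always for \eqref{travel1}, and for \eqref{travel2} whenever $c\neq0$, which is exactly where the $c=0$ family \eqref{estac2} reappears). This is more elementary and in fact strictly stronger: it shows that \emph{every} positive periodic solution with the regularity the paper works with is constant, so the smallness parameters $\varepsilon$, $L$ are only needed to keep $h$ positive and to place the resulting constant in $\mathcal{S}_{\varepsilon,h_*,c}$, and it even rules out the non-constant solutions of \eqref{ode1}, \eqref{travel1}, \eqref{travel2} (with $c\neq0$) left open in Remark \ref{nota}; by contrast, the paper's scheme only gives near-constant rigidity but is the argument that survives when no such multiplier identity is available. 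Two points you should keep explicit, both consistent with the paper: the integrations by parts need $h\in H^{3}(\T)$ (equivalently $C^{3}$) with periodic derivatives, which follows from the standard ODE bootstrap the paper invokes under $\norm{h-h_*}_{L\e{\infty}(\T)}\leq\varepsilon<h_*$; and your use of $J=-ch_*+\frac{h_*^{2}}{2}$ (resp. $J=-ch_*$) when concluding $\tilde{h}\in\mathcal{S}_{\varepsilon,h_*,c}$ (resp. $h=h_*$) is the same reading of the statement that the paper's own proof employs.
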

\begin{nota} Concerning the size of the admissible perturbations $\varepsilon$, it will be seen in the proof that we need to assume $\varepsilon<h_*$. In particular, $\varepsilon$ must be small if $h_*$ is small. Furthermore, notice that  in the case (b) we cannot expect $\varepsilon$ uniformly away from $0$ if $c\to 0$  because, as we have seen in Proposition \ref{prop}, there exists solutions to the problem different from constant if $c=0$.
\end{nota}
\begin{nota}
The set $\mathcal{S}_{\varepsilon,h_*,c}$ is not empty because $h_*\in\mathcal{S}_{\varepsilon,h_*,c}$. The set $\mathcal{S}_{\varepsilon,h_*,c}$ contains one or two elements depending of the values of $\varepsilon$, $h_*$ and $c$. Notice that for equations \eqref{travel1}, \eqref{odebc} we do not have near constant uniqueness results for each value of $J$ in general. However, Proposition \ref{proptravel}(a) implies that for any given value of $J$ there are at most two solutions of \eqref{travel1}, \eqref{odebc} close to the value $h_*$ and both of them are constant. This is a consequence of the nonmonotonicity of the function that give $J$ as a function of $h_*$ (cf. Figure \ref{graficaJ}) that allows to obtain two constant solutions of \eqref{travel1}, \eqref{odebc} that are arbitrarily close in the $L\e{\infty}$-norm.
\end{nota}
\begin{figure}[htb]\label{graficaJ}
\includegraphics[width=100mm]{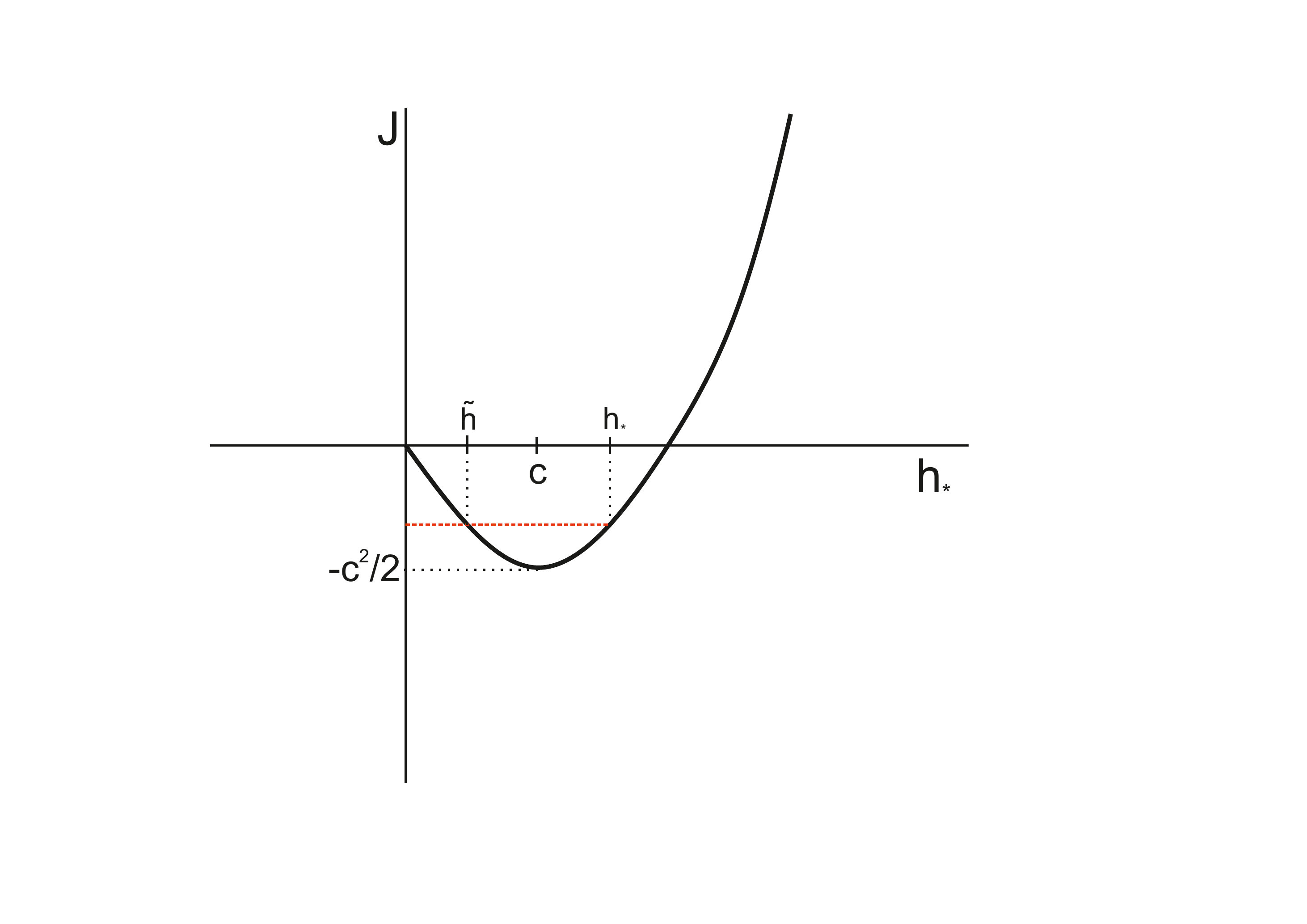}
\caption{Graphic of $J(h_*)=-ch_*+\frac{h_*\e{2}}{2}$}
\end{figure}

\begin{proof}
We prove first (a).
It is trivial to see that $h=h_*$ is solution of the problem \eqref{travel1}, \eqref{odebc} with $J=-ch_*+\frac{h_*\e{2}}{2}$. Therefore, we only have to prove that $\norm{h-h_*}_{L\e{\infty}}(\T)\leq\varepsilon$ for $\varepsilon>0$ small implies $h$ is constant. 

Standard ODE methods imply
\begin{equation}\label{standardestimate}
\norm{h-h_*}_{H\e{3}(\T)}\leq C\norm{h-h_*}_{L\e{\infty}(\T)}
\end{equation}
if $\varepsilon\leq \frac{h_*}{2}$ where in the rest of the proof $C$ is a generic constant depending on $L$.

We define the functional $F:H\e{3}(\T)\to L\e{2}(\T)$ as

\begin{equation}\label{functional2}
F(h)\equiv -ch+\frac{h\e{2}}{2}+h\e{3}(\de{}h+\de{3}h).
\end{equation}
Then, we can reformulate \eqref{travel1},\eqref{odebc} as
\begin{equation}\label{functional2eq}
F(h)=J.
\end{equation}
We define $V_0\equiv Span\{cos(\theta),sin(\theta)\}\subset L\e{2}(\T)$ and $V_1$ as the orthogonal of $V_0\oplus Span\{1\}$ in $L\e{2}(\T)$. We denote as $P_0$ and $P_1$ the orthogonal projection of $L\e{2}(\T)$ in $V_0$ and $V_1$ respectively. 

Given $h\in H\e{3}(\T)$ we can decompose it as 
\begin{equation}\label{descomposicion}
h-h*=a_0+w+\zeta
\end{equation}
 with $a_0\in\R$, $w\in V_0\cap H\e{3}(\T)$ and $\zeta\in V_1\cap H\e{3}(\T)$. Notice that (cf. \eqref{standardestimate}) 

\begin{equation}\label{estepsilon}
\abs{a_0}\leq C\varepsilon,\quad\norm{w}_{H\e{3}(\T)}\leq C\varepsilon\quad\textit{and}\quad\norm{\zeta}_{H\e{3}(\T)}\leq C\varepsilon.
\end{equation}  

We expand $F$ using Taylor formula as
\begin{equation}\label{taylortravel1}
F(h)= F(h_*+a_0)+DF(h_*+a_0)(h-h_*-a_0)+\frac{1}{2}D\e{2}F(h_*+a_0)(h-h_*-a_0,h-h_*-a_0)+\mathcal{O}(\norm{h-h_*-a_0}_{H\e{3}(\T)}\e{3})
\end{equation}

where $DF(h_*+a_0):H\e{3}(\T)\to L\e{2}(\T)$ is given by
\begin{equation}\label{primeraderivtravel1}
DF(h_*+a_0)(g)=(h_*+a_0-c)g+(h_*+a_0)\e{3}(\de{}g+\de{3}g)
\end{equation}

and $D\e{2}F(h_{*}):H\e{3}(\T)\times H\e{3}(\T)\to L\e{2}(\T)$ is determined by the quadratic form

\begin{equation}\label{segundaderivtravel2}
\frac{1}{2}D\e{2}F(h_{*}+a_0)(g,g)= \frac{g\e{2}}{2}+3(h_{*}+a_0)\e{2}g(\de{}g+\de{3}g)
\end{equation}

Now, we define $\tilde{h}=h_*+a_0$ to discard the notation. Using that $F(\tilde{h})=J+(h_*-c)a_0+\frac{a_0\e{2}}{2}$, as well as \eqref{functional2eq} and \eqref{taylortravel1}
\begin{equation}\label{travel1taylor0}
(h_*-c)a_0+\frac{a_0\e{2}}{2}+DF(\tilde{h})(h-\tilde{h})+\frac{1}{2}D\e{2}F(\tilde{h})(h-\tilde{h},h-\tilde{h})+\mathcal{O}(\norm{h-\tilde{h}}_{H\e{3}(\T)}\e{3})=0
\end{equation}

Applying the projection $P_1$ to \eqref{travel1taylor0} and using \eqref{primeraderivtravel1},  \eqref{segundaderivtravel2} and \eqref{descomposicion},
\begin{equation}\label{projection1travel1}
(P_1DF(\tilde{h})P_1)(\zeta)+P_1(\frac{1}{2}(w\e{2}))+\mathcal{O}(\norm{w}_{H\e{3}(\T)}\norm{\zeta}_{H\e{3}(\T)})+\mathcal{O}(\norm{\zeta}_{H\e{3}(\T)}\e{2})+\mathcal{O}(\norm{w}_{H\e{3}(\T)}\e{3})=0
\end{equation}
where $P_1DF(\tilde{h})P_1:V_1\cap H\e{3}(\T)\to V_1$ is given by 
\begin{equation}\label{p1derivada}
(P_1DF(\tilde{h})P_1)(\zeta)=(\tilde{h}-c)\zeta+(\tilde{h})\e{3}(\de{}\zeta+\de{3}\zeta)
\end{equation}
The operator \eqref{p1derivada} can be represented
using Fourier as,
\begin{displaymath}
\Big((P_1DF(\tilde{h})P_1)(\zeta)\Big)_{\ell}=((\tilde{h}-c)+i(\tilde{h})\e{3}(\ell-\ell\e{3}))a_\ell\quad\textit{for}\quad \ell\neq 0,-1,1.
\end{displaymath}

where $\zeta(\theta)=\sum_{\ell=-\infty, \ell\neq 0,1,-1}\e{\infty}a_\ell e\e{i\ell\theta}$ and $(P_1DF(\tilde{h})P_1)(\zeta)=\sum_{\ell=-\infty, \ell\neq 0,1,-1}\e{\infty}\Big((P_1DF(\tilde{h})P_1)(\zeta)\Big)_\ell e\e{i\ell\theta}$.

Then, using the fact that $\tilde{h}\geq \frac{h_*}{2}$ if $\varepsilon$ is sufficiently small (cf. \eqref{estepsilon}), the operator $\Big((P_1DF(\tilde{h})P_1)(\zeta)\Big)\e{-1}:V_1\to V_1\cap H\e{3}(\T)$ exists and it is bounded.

Thus, 
\begin{equation}\label{zeta}
\zeta=\zeta_1+\zeta_2
\end{equation}
where 
\begin{equation}\label{zeta1}
\zeta_1=-\Big((P_1DF(\tilde{h})P_1)(\zeta)\Big)\e{-1}(P_1(\frac{1}{2}(w\e{2})))
\end{equation}
and
\begin{equation}\label{zeta2}
\zeta_2=-\Big((P_1DF(\tilde{h})P_1)(\zeta)\Big)\e{-1}(Z)\quad\textit{with}\quad \norm{Z}_{L\e{2}(\T)}\leq C(\norm{w}_{H\e{3}(\T)}\norm{\zeta}_{H\e{3}(\T)}+\norm{\zeta}_{H\e{3}(\T)}\e{2}+\norm{w}_{H\e{3}(\T)}\e{3})
\end{equation}

Identity \eqref{zeta1} implies that $\norm{\zeta_1}_{H\e{3}(\T)}\leq C\norm{w}\e{2}_{H\e{3}(\T)}$. Combining this estimate with \eqref{zeta} and \eqref{zeta2}, we obtain 
\begin{equation}\label{estimatzeta2}
\norm{\zeta_2}_{H\e{3}(\T)}\leq C(\norm{w}\e{3}_{H\e{3}(\T)}+\norm{w}_{H\e{3}(\T)}\norm{\zeta_2}_{H\e{3}(\T)}+\norm{\zeta_2}_{H\e{3}(\T)}\e{2}).
\end{equation} 

Using \eqref{estepsilon}, \eqref{zeta2} we obtain that $\norm{\zeta_2}_{H\e{3}(\T)}\leq C\varepsilon$ if $\varepsilon$ sufficiently small. Then, \eqref{estimatzeta2} yields,
\begin{equation}\label{ordenzeta2}
\norm{\zeta_2}_{H\e{3}(\T)}\leq C\norm{w}\e{3}_{H\e{3}(\T)}
\end{equation}

In order to obtain a more explicit formula for $\zeta_1$, we write $w=a_{-1} e\e{-i\theta}+a_1 e\e{i\theta}$. Then $$P_1(\frac{w\e{2}}{2})=\frac{1}{2}(a_{1}\e{2}e\e{2i\theta}+a_{-1}\e{2}e\e{-2i\theta})$$
and using \eqref{zeta1} we derive

\begin{equation}\label{travelorto}
\zeta_1=-\frac{1}{2(\tilde{h}-c)-12\tilde{h}\e{3}i}a_{1}^2 e^{2i\theta}-\frac{1}{2(\tilde{h}-c)+12\tilde{h}\e{3}i}a_{-1}^2 e^{-2i\theta}.
\end{equation}

Now applying the projection $P_0$ to \eqref{travel1taylor0} and using \eqref{primeraderivtravel1}, \eqref{segundaderivtravel2}, \eqref{estimatzeta2} and \eqref{travelorto}, as well as the fact that $P_0(\de{}+\de{3})=0$ and $P_0(1)=0$, we arrive at:
\begin{equation}\label{p0travel1}
(\tilde{h}-c)w+P_0(w\zeta_1)+P_0(3\tilde{h}\e{2}w(\de{}\zeta_1+\de{3}\zeta_1))=\mathcal{O}(\norm{w}\e{4}_{H\e{3}(\T)})
\end{equation}
Using
\begin{align*}
&P_0(w\zeta_1)=-\frac{1}{2(\tilde{h}-c)-12\tilde{h}\e{3}i}a_{-1}a_{1}^2 e^{i\theta}-\frac{1}{2(\tilde{h}-c)+12\tilde{h}\e{3}i}a_1a_{-1}^2 e^{-i\theta};\\
&P_0(3(\tilde{h})\e{2}w(\de{}\zeta_1+\de{3}\zeta_1))=\\
&=18\tilde{h}\e{2}i(\frac{1}{2(\tilde{h}-c)-12\tilde{h}\e{3}i}a_{-1}a_{1}^2 e^{i\theta}-\frac{1}{2(\tilde{h}-c)+12\tilde{h}\e{3}i}a_1a_{-1}^2 e^{-i\theta}),
\end{align*}
we can rewrite \eqref{p0travel1} as
\begin{equation}\label{sistma1}
\left\{ \begin{array}{ll}
\abs{(\tilde{h}-c)a_1+\alpha a_{-1}a_1\e{2}}=\mathcal{O}(\norm{w}\e{4}_{H\e{3}(\T)})\\
\abs{(\tilde{h}-c)a_{-1}+\bar{\alpha} a_{1}a_{-1}\e{2}}=\mathcal{O}(\norm{w}\e{4}_{H\e{3}(\T)})
\end{array} \right.
\end{equation}

with $\alpha=\alpha_1+i\alpha_2$ where 
\begin{equation}\label{alpha1alpha2}
\alpha_1=\frac{-(\tilde{h}-c)-108\tilde{h}\e{5}}{2(\tilde{h}-c)\e{2}+72\tilde{h}\e{6}},\quad\alpha_2=\frac{9(\tilde{h}-c)\tilde{h}\e{2}-3\tilde{h}\e{3}}{(\tilde{h}-c)\e{2}+36\tilde{h}\e{6}}.
\end{equation}

Using that $a_1=\overline{a_{-1}}$ (cf. \eqref{espacioswzeta}), we can reduce \eqref{sistma1} to:
\begin{displaymath}
\abs{(\tilde{h}-c+\alpha_1\abs{a_1}\e{2})a_1+i\alpha_2 \abs{a_1}\e{2}a_1}=\mathcal{O}(\abs{a_1}\e{4}).
\end{displaymath}
Suppose now that $\abs{a_1}\neq 0$, therefore:
\begin{displaymath}
\abs{(\tilde{h}-c+\alpha_1\abs{a_1}\e{2})+i\alpha_2 \abs{a_1}\e{2}}\leq C\abs{a_1}\e{3}
\end{displaymath}

Thus, using that due to \eqref{estepsilon}, we have $\abs{a_1}\leq C\varepsilon$. Then,
\begin{align*}
&(\frac{1}{2}\abs{\tilde{h}-c}\e{2}+\alpha_2\e{2})\abs{a_1}\e{4}\leq\frac{1}{2}\abs{\tilde{h}-c}\e{2}+\alpha_2\e{2}\abs{a_1}\e{4}\leq\abs{\tilde{h}-c+\alpha_1\abs{a_1}\e{2}a_1}\e{2}+\alpha_2\e{2}\abs{a_1}\e{4}\\
&= \abs{(\tilde{h}-c+\alpha_1\abs{a_1}\e{2})+i\alpha_2 \abs{a_1}\e{2}}\e{2}\leq C\abs{a
_1}\e{6}
\end{align*}

The definition \eqref{alpha1alpha2} implies that $\frac{1}{2}\abs{\tilde{h}-c}\e{2}+\alpha_2\e{2}\geq C_0(L)>0$ whence $\abs{a_1}\geq C_1(L)$. This contradicts the fact that $\abs{a_1}\leq C\varepsilon$ if $\varepsilon$ is sufficiently small. Therefore, we have that $\abs{a_1}=0$. Thus, $w=\zeta_1=0$ and $h=\tilde{h}$. Therefore, $h$ is a constant $\tilde{h}$ given by $\tilde{h}=h_*+a_0$ where $\tilde{h}$ solves $J=-c\tilde{h}+\frac{1}{2}\tilde{h}\e{2}$.

We now consider the case (b). It directly follows that $h=h_*$ is solution to problem \eqref{travel2}, \eqref{odebc} with $J=-ch_*$. We define $F:H\e{3}(\T)\to L\e{2}(\T)$ by means of
\begin{equation}\label{functional3}
F(h)\equiv -ch+h\e{3}(\de{}h+\de{3}h)\quad\textit{for}\quad c\neq 0
\end{equation}
We have that 
\begin{displaymath}
DF(h_*)(h)=-ch+h_*\e{3}(\de{}h+\de{3}h).
\end{displaymath}
For $c\neq 0$, $DF(h_*)$ is invertible and $F(h_*)=J$, therefore the result follows arguing as in the proof of Proposition \ref{prop} in the case of $J>0$.
\end{proof}

We now consider the problems of Type (2) which were introduced at the beginning of this Section. Since $J$ is not given a priori it is convenient to reformulate the problems in a way that $J$ does not appear.

Differentiating equation \eqref{ode1} we have

\begin{equation}\label{derivode1}
\de{}(\frac{h\e{2}}{2})+\de{}(h^3(\de{}{h}+\partial_\theta^3 h))=0,
\end{equation}

and for \eqref{ode2},
\begin{equation}\label{derivode2}
\de{}(h^3(\de{}{h}+\partial_\theta^3 h))=0,
\end{equation}
In both cases we must have in addition 
\begin{equation}\label{derivbc}
h(\theta+2\pi)=h(\theta)\quad\textit{for}\quad\theta\in\R.
\end{equation}

Equations for traveling waves solutions of \eqref{travel1} and \eqref{travel2} can be written as

\begin{equation}\label{derivtravel1}
-c\de{}h+\de{}(\frac{h\e{2}}{2})+\de{}(h^3(\de{}{h}+\partial_\theta^3 h))=0
\end{equation}
and
\begin{equation}\label{derivtravel2}
-c\de{}h+\de{}(h^3(\de{}{h}+\partial_\theta^3 h))=0.
\end{equation}
In both cases $h$ satisfies \eqref{derivbc}.
\begin{prop}\label{prop3}\hspace{2cm}

(a)Let be $h_*>0$. Suppose that $h$ is a solution of one of the problems $[$\eqref{derivode1},\eqref{derivbc}$]$. For any $L>0$ there exists $\varepsilon=\varepsilon(L)>0$ such that $\frac{1}{L}\leq h_*\leq L$ and $h$ satisfies $\norm{h-h_*}_{L\e{\infty}(\T)}\leq\varepsilon$ and $\int_{\T}h(\theta)d\theta=2\pi h_*$, then $h=h_*$.

(b)There exists a two-parameter family of positive solutions $h\in H\e{4}(\T)$ of \eqref{derivode2}, \eqref{derivbc} with $\int_{\T}h(\theta)d\theta=2\pi h_*$ that is given by:
\begin{displaymath}
h(\theta)=c_1+c_2\sin(\theta-\theta_0),
\end{displaymath}
where $\abs{c_2}<c_1$, for $c_1>0$, $c_2\in\R$ and $\theta_0\in\R.$

(c)Let be $h_*>0$. Suppose that $h$ is a solution of one of the problems $[$\eqref{derivtravel1},\eqref{derivbc}$]$ and $[$\eqref{derivtravel2},\eqref{derivbc}$]$. For any $L>0$ there exists $\varepsilon=\varepsilon(L)>0$ such that $\abs{c}\in[-L,L]$, $\frac{1}{L}\leq h_*\leq L$ and $h$ satisfies $\norm{h-h_*}_{L\e{\infty}(\T)}\leq\varepsilon$ and $\int_{\T}h(\theta)d\theta=2\pi h_*$, then $h=h_*$. 
\end{prop}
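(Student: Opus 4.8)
The plan is to exploit the fact that the ``differentiated'' problems in Proposition \ref{prop3} are equivalent to the steady/travelling-wave problems of Section \ref{S4} once the first integral is reinstated: if $h$ solves \eqref{derivode1} (resp. \eqref{derivode2}, \eqref{derivtravel1}, \eqref{derivtravel2}) together with \eqref{derivbc}, then integrating once in $\theta$ and using that all the quantities involved are $2\pi$-periodic shows that $h$ solves \eqref{ode1} (resp. \eqref{ode2}, \eqref{travel1}, \eqref{travel2}) and \eqref{odebc} for a constant $J\in\R$ which is now \emph{determined by $h$} rather than prescribed. The strategy is then: (i) recover $J$ from an integral identity; (ii) use $\norm{h-h_*}_{L^\infty(\T)}\leq\varepsilon$ together with the constraint $\int_\T h\,d\theta=2\pi h_*$ to show that $J$ is $O(\varepsilon)$-close to the value attained by the relevant constant solution, so that the hypotheses of Propositions \ref{prop} and \ref{proptravel} on the range of $J$ are met; (iii) apply those propositions to conclude $h$ is constant and then use the mean constraint to identify it with $h_*$. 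The only genuinely delicate point is the verification, in part (c), that this $J$ lies in the admissible range of Proposition \ref{proptravel}(a); everything else is bookkeeping.

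Part (b) is immediate: \eqref{derivode2}, \eqref{derivbc} forces $h^3(\partial_\theta h+\partial_\theta^3 h)=J$, i.e.\ \eqref{ode2}, \eqref{odebc}, so Proposition \ref{prop} gives $J=0$ and $h=c_1+c_2\sin(\theta-\theta_0)$ with $c_1>0$, $\abs{c_2}<c_1$. These functions are smooth, hence in $H^4(\T)$, and $\int_\T h\,d\theta=2\pi c_1=2\pi h_*$ fixes $c_1=h_*$, leaving $(c_2,\theta_0)$ as the two free parameters.

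For part (a), since $h>0$ we divide \eqref{ode1} by $h^2$ and integrate over $\T$; the terms $\int_\T h\partial_\theta h\,d\theta$ and $\int_\T h\partial_\theta^3 h\,d\theta$ both vanish by periodicity (the second after one integration by parts), so $J\int_\T h^{-2}\,d\theta=\pi$. From $h\in[h_*-\varepsilon,h_*+\varepsilon]$ pointwise we get $\tfrac{(h_*-\varepsilon)^2}{2}\leq J\leq\tfrac{(h_*+\varepsilon)^2}{2}$, hence $J\in[\tfrac1{L'},L']$ for a suitable $L'=L'(L)$ and $\abs{\sqrt{2J}-h_*}\leq C(L)\varepsilon$, so that $\norm{h-\sqrt{2J}}_{L^\infty(\T)}\leq\norm{h-h_*}_{L^\infty(\T)}+\abs{h_*-\sqrt{2J}}\leq C'(L)\varepsilon$. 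Choosing $\varepsilon=\varepsilon(L)$ small enough that $C'(L)\varepsilon$ falls below the threshold furnished by Proposition \ref{prop} for parameters in $[\tfrac1{L'},L']$, we conclude $h\equiv\sqrt{2J}$; then $\int_\T h\,d\theta=2\pi\sqrt{2J}=2\pi h_*$ gives $h=h_*$.

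Part (c) follows the same pattern. Dividing \eqref{travel1} by $h^2$ and integrating yields $J\int_\T h^{-2}\,d\theta=\pi-c\int_\T h^{-1}\,d\theta$, so $J$ is $O(\varepsilon)$-close to $J_*:=-ch_*+\tfrac{h_*^2}{2}$; for \eqref{travel2} the analogous identity $J\int_\T h^{-3}\,d\theta=-c\int_\T h^{-2}\,d\theta$ puts $J$ near $-ch_*$. To invoke Proposition \ref{proptravel}(a) one also needs $J\in[-\tfrac{c^2}{2},L]$: the upper bound holds because $J_*$ is bounded for $h_*,c$ in compact ranges, and the lower bound is seen by instead dividing \eqref{travel1} by $h^3$ and integrating, which gives $\int_\T\frac{-ch+h^2/2-J}{h^3}\,d\theta=0$; if $J<-\tfrac{c^2}{2}$, then $-ch+\tfrac{h^2}{2}\geq-\tfrac{c^2}{2}>J$ pointwise, so the integrand is strictly positive, a contradiction. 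Hence $J\geq-\tfrac{c^2}{2}$ for every positive periodic solution, and for $\varepsilon$ small $J$ lies in the admissible range. Proposition \ref{proptravel}(a) then forces $h$ to be the constant $\tilde h\in\mathcal{S}_{\varepsilon,h_*,c}$, and $\int_\T h\,d\theta=2\pi h_*$ gives $\tilde h=h_*$. For \eqref{derivtravel2} one argues identically but with Proposition \ref{proptravel}(b), which directly yields $h=h_*$; here $c$ must be kept away from $0$, consistently with the nonconstant solutions of \eqref{derivode2} exhibited in part (b). The $h^{-3}$-weighted identity used to trap $J$ in the range $[-c^2/2,L]$ is the step I expect to be the main obstacle, since without it Proposition \ref{proptravel}(a) cannot be quoted.
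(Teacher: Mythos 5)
Your proof is correct but follows a genuinely different route from the paper's. The paper treats $J$ as an unknown that simply disappears from the argument: it Taylor-expands the relevant functional $F$ around $h_*$ (not around $h_*+a_0$), notes that the constraint $\int_\T h=2\pi h_*$ removes the need for a constant Fourier mode $a_0$, and then observes that $J_*-J$ is a constant which is annihilated by both projections $P_0$ and $P_1$; the argument then proceeds verbatim as in Propositions~\ref{prop} and~\ref{proptravel}, with no need to locate $J$. You instead locate $J$ explicitly: by dividing the once-integrated equations by suitable powers of $h$ and integrating over $\T$ you obtain weighted identities (e.g.\ $J\int_\T h^{-2}d\theta=\pi$ for \eqref{derivode1}, $J\int_\T h^{-2}d\theta=\pi-c\int_\T h^{-1}d\theta$ for \eqref{derivtravel1}) which trap $J$ near the flux of the constant $h_*$, and the $h^{-3}$-weighted identity $\int_\T(-ch+\tfrac{h^2}{2}-J)h^{-3}d\theta=0$ gives the a priori bound $J\geq-\tfrac{c^2}{2}$ needed to put $J$ in the admissible range of Proposition~\ref{proptravel}(a). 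You can then invoke Propositions~\ref{prop} and~\ref{proptravel} as black boxes and finish with the mean constraint. This is a more modular argument — it does not reopen the proofs of the earlier propositions — at the price of the extra work to pin down $J$; the paper's route avoids that work entirely by choosing a decomposition in which $J$ never appears. Your observation that, for \eqref{derivtravel2}, $c$ must be kept away from $0$ (since $c=0$ reduces to \eqref{derivode2}, which admits the non-constant solutions of part (b) arbitrarily close to $h_*$) is correct and applies equally to the paper's proof; the hypothesis in part (c) should read $\tfrac1L\leq\abs{c}\leq L$ as in Proposition~\ref{proptravel}(b), and this is a defect of the statement rather than of either proof.
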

\begin{proof}
The proof of (b) follows from the fact that under the assumptions in the Proposition $h$ solves \eqref{ode2}, \eqref{odebc} for some $J\in\R$. Using then Proposition \ref{prop} we obtain the desired result.
 
Notice that in both cases (a) and (c), the assumptions in the Proposition imply
\begin{equation}
\norm{h-h_*}_{H\e{4}(\T)}\leq C\norm{h-h_*}_{L\e{\infty}(\T)}\leq C\varepsilon
\end{equation}
if $\varepsilon$ is sufficiently small (depending on $L$) and $C=C(L)$.

Integrating \eqref{derivode1}, \eqref{derivtravel1} and \eqref{derivtravel2}, we obtain that $h$ satisfies third order differential equations which can be reformulated as \eqref{functional1}, \eqref{functional2} and \eqref{functional3} by means of suitable functionals $F:H\e{3}(\T)\to L\e{2}(\T)$. For instance, in the case of equation \eqref{derivtravel1} we define: 

\begin{equation}\label{derivfunctional2}
F(h)\equiv -ch+\frac{h\e{2}}{2}+h\e{3}(\de{}h+\de{3}h).
\end{equation}

Then, we can reformulate \eqref{derivtravel1},\eqref{derivbc} as
\begin{equation}\label{derivfunctional2eq}
F(h)=J.
\end{equation}

We expand $F$ using Taylor formula as
\begin{equation}\label{derivtaylortravel1}
F(h)= F(h_*)+DF(h_*)(h-h_*)+\frac{1}{2}D\e{2}F(h_*)(h-h_*,h-h_*)+\mathcal{O}(\norm{h-h_*}_{H\e{3}(\T)}\e{3})
\end{equation}
The same expansion can be derived for the different functionals $F$ associated to each other problems. We define $F(h_*)=J_*$ and using \eqref{derivfunctional2eq} and \eqref{derivtaylortravel1}
\begin{equation}\label{derivtravel1taylor0}
J_*-J+DF(h_*)(h-h_*)+\frac{1}{2}D\e{2}F(h_*)(h-h_*,h-h_*)+\mathcal{O}(\norm{h-h_*}_{H\e{3}(\T)}\e{3})=0.
\end{equation}

We now define $V_0\equiv Span\{cos(\theta),sin(\theta)\}\subset L\e{2}(\T)$ and we denote by $V_1$ as the orthogonal of $V_0\oplus Span\{1\}$ in $L\e{2}(\T)$. We then write $P_0$ and $P_1$ the orthogonal projection of $L\e{2}(\T)$ in $V_0$ and $V_1$ respectively. We have that we can write $h=h_*+w+\zeta$ with $w\in V_0$ and $\zeta\in V_1$. Notice that we do not need to add a constant Fourier mode $a_0$ because $\int_{\T}h(\theta)d\theta=2\pi h_*$.

 Therefore, we can apply $P_0$ and $P_1$ to \eqref{derivtravel1taylor0}, and using that $P_0(J_*-J)=P_1(J_*-J)=0$, we can continue with the same arguments as in the proofs of Propositions \ref{prop} and \ref{proptravel}.

\end{proof}

\begin{nota}\label{nota} 
We will refer, from now on, to all the solutions described in Proposition \ref{prop} and Proposition \ref{prop3}(a) (including the non-constant solutions \eqref{estac1}, \eqref{estac2}) as \textbf{circular steady states}. We will refer to the solutions described in Proposition \ref{proptravel} and in Proposition \ref{prop3}(b) as \textbf{circular travelling waves}. The term circular arises from the fact that the interface $\{r=1+\varepsilon h\}$ is a circle for the above mentioned solutions. Notice that Propositions \ref{prop}, \ref{proptravel} and \ref{prop3} do not rule out the possibility of having non-constant solutions of $[$\eqref{ode1}, \eqref{odebc}$]$ with $J>0$ $[$\eqref{travel1}, \eqref{odebc}$]$,$[$\eqref{travel2}, \eqref{odebc}$]$,$[$\eqref{derivode1}, \eqref{derivbc}$]$,$[$\eqref{derivtravel1}, \eqref{derivbc}$]$ and $[$\eqref{derivtravel2}, \eqref{derivbc}$]$. Such solutions, that geometrically do not describe circular interfaces, will not be considered in this paper.
\end{nota}

\section{Stability of the circular steady states}\label{S5}

In this Section, we study the stability of the circular steady states and travelling waves described in Propositions \ref{prop3} (see also Remark \ref{nota}). We consider separately the cases in which $\gamma\approx\frac{b}{\varepsilon\e{2}}$ with $b>0$ and $\gamma\gg\frac{1}{\varepsilon\e{2}}$.

\subsection{The case $\gamma\approx\frac{b}{\varepsilon\e{2}}$}\label{S5ss1}

As we have seen in Subsection \ref{S3s1} the evolution of the interface can be approximated in this case by means of the equation \eqref{equevol}.
We will next prove the following global well posedness result:
\begin{thm}\label{thglobal}
Let $c>0$. There exists $\varepsilon>0$ (depending on $c$) such that, for any $h_0\in H\e{4}(\T)$ satisfying $\norm{h_0-c}_{H\e{4}(\T)}<\varepsilon$ with $\frac{1}{2\pi}\int_{\T}h_0=c$, there exists a unique solution $h\in C([0,\infty); H\e{4}(\T))\cap C\e{1}((0,\infty);H\e{4}(\T))$ of \eqref{equevol}, where $h(\cdot,0)=h_0(\cdot)$.

Moreover, we have
\begin{equation}\label{cotathglobal}
\norm{h(\cdot,t)-c}_{H\e{4}(\T)}\leq \bar{C}\varepsilon\quad\textit{for all}\quad t\geq 0
\end{equation}
where $\bar{C}$ depend only on $c$. 
\end{thm}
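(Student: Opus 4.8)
The equation \eqref{equevol} is a fourth-order quasilinear parabolic PDE (the top-order term is $\partial_\theta(h^3\partial_\theta^3 h)$, which when $h$ is close to the positive constant $c$ has leading part $c^3\partial_\theta^4 h$, a good sign for parabolicity). The natural strategy is a fixed-point / energy argument for local existence combined with an a priori estimate that closes globally in a ball of radius $\bar C\varepsilon$ around $c$. I would work with the perturbation $v=h-c$, which since $\tfrac1{2\pi}\int_\T h_0=c$ and \eqref{equevol} is in divergence form (so $\int_\T h(\cdot,t)$ is conserved) stays mean-zero for all time, i.e.\ $v(\cdot,t)\in\dot H^4(\T)$. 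Writing the equation for $v$ one gets $v_t+c^3\partial_\theta^4 v = \mathcal{N}(v)$ where $\mathcal{N}(v)$ collects all remaining terms; crucially every term in $\mathcal{N}(v)$ is at least quadratic in $v$ and its derivatives, or carries a factor of $v$ multiplying fourth derivatives, so $\mathcal{N}$ is ``small'' near $v=0$. The operator $-c^3\partial_\theta^4$ generates an analytic semigroup on $\dot H^4(\T)$ (diagonal in Fourier with eigenvalues $-c^3\ell^4<0$ for $\ell\neq0$), with smoothing estimates $\norm{e^{-c^3 t\partial_\theta^4}f}_{H^{4+s}}\leq C t^{-s/4}\norm{f}_{H^4}$.

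The steps, in order. First, set up the local existence: recast \eqref{equevol} as the integral equation $v(t)=e^{-c^3 t\partial_\theta^4}v_0+\int_0^t e^{-c^3(t-s)\partial_\theta^4}\mathcal{N}(v(s))\,ds$ and run a contraction mapping argument in $C([0,T];\dot H^4(\T))$ for small $T$; the smoothing of the semigroup handles the loss of derivatives in $\mathcal{N}$ (which contains terms like $v\,\partial_\theta^4 v$, needing one extra derivative's worth of smoothing, integrable since the exponent $t^{-1/4}$ is). This yields a unique local solution in $C([0,T];H^4(\T))\cap C^1((0,T];H^4(\T))$ by parabolic regularity. Second, derive the a priori estimate: multiply the $v$-equation by suitable derivatives of $v$ and integrate by parts over $\T$ to get an energy inequality of the schematic form $\tfrac{d}{dt}\norm{v}_{H^4}^2 \leq -\delta\norm{v}_{H^6}^2 + C\norm{v}_{H^4}\norm{v}_{H^6}^2 + (\text{lower order})$; the key point is that the good dissipative term $-\delta\norm{v}_{H^6}^2$ (coming from $c^3\partial_\theta^4$, after controlling the perturbation of the coefficient $h^3=c^3+O(v)$) dominates the nonlinear terms once $\norm{v}_{H^4}\leq\bar C\varepsilon$ is small enough, so that $\tfrac{d}{dt}\norm{v}_{H^4}^2\leq 0$ in that regime. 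Third, combine: if $\norm{v_0}_{H^4}<\varepsilon$ with $\bar C\varepsilon$ small, the a priori bound prevents $\norm{v(t)}_{H^4}$ from ever reaching $\bar C\varepsilon$, so the local solution extends to $[0,\infty)$ by a standard continuation argument, and \eqref{cotathglobal} holds.

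The main obstacle is the energy estimate at top order: the coefficient $h^3$ in front of $\partial_\theta^3 h$ is not constant, so integrating by parts to extract the dissipation $\int_\T h^3(\partial_\theta^3 v)^2$ (or the analogous quantity after differentiating the equation four times) produces commutator terms where derivatives fall on $h^3$; one must check these are genuinely lower order — controllable by interpolation (Gagliardo–Nirenberg on $\T$) and absorbable into the dissipation with a small constant, using $\norm{v}_{H^4}$ small. The first-order transport term $\partial_\theta(h^2/2)=h\,\partial_\theta h$ contributes an antisymmetric piece that is harmless in the $L^2$-type energy, plus genuinely quadratic corrections. A secondary technical point is that the dissipative term only controls $\dot H^6$-type seminorms, so one needs the mean-zero property of $v$ (Poincaré/Wirtinger on $\T$) to upgrade seminorm control to full $H^4$-norm control, which is exactly why the hypothesis $\tfrac1{2\pi}\int_\T h_0=c$ is imposed. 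One should also verify $h$ stays strictly positive (bounded below by $c/2$, say) throughout, which follows immediately from $\norm{v}_{L^\infty}\leq C\norm{v}_{H^4}\leq C\bar C\varepsilon<c/2$; this is what legitimizes all the divisions by powers of $h$ and the parabolicity.
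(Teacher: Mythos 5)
There is a genuine gap, and it is at the heart of the matter: your linearization around $h=c$ is wrong, and the missing linear term destroys the energy estimate you rely on. Linearizing the full flux $\partial_\theta\big(h^3(\partial_\theta h+\partial_\theta^3 h)\big)$ at $h=c$ gives $c^3(\partial_\theta^2 v+\partial_\theta^4 v)$, not just $c^3\partial_\theta^4 v$; the extra term $c^3\partial_\theta^2 v$ is a linear \emph{backward} diffusion term, so it cannot be swept into a ``quadratic or small'' remainder $\mathcal N(v)$. In Fourier the real part of the linear symbol is $-c^3(n^4-n^2)$, which vanishes exactly at $n=\pm1$ (this is the operator $L$ in \eqref{defl}, with spectrum computed in \eqref{fourierl}). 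Consequently your schematic inequality $\tfrac{d}{dt}\norm{v}_{H\e{4}}^2\le -\delta\norm{v}_{H\e{6}}^2+C\norm{v}_{H\e{4}}\norm{v}_{H\e{6}}^2+\dots$ is false: the dissipative quadratic form degenerates completely on the two neutral modes $e^{\pm i\theta}$, so there is no $-\delta\norm{v}_{H\e{6}}^2$ on all of $\dot H\e{4}(\T)$ (Poincar\'e on mean-zero functions does not help, since the obstruction is at $|n|=1$, not $n=0$), and smallness of $\norm{v}_{H\e{4}}$ cannot make the nonlinear input into those modes absorbable by a dissipation that is identically zero there. A contraction argument for local existence survives this, but the global bound \eqref{cotathglobal} does not follow from your Step 2, because a priori the quadratic nonlinearity could pump the neutral amplitudes.

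This degeneracy is precisely why the paper's proof is structured the way it is: it passes to the rotating frame \eqref{defvfuncionh}, splits $v=v_0+v_1$ with $v_0\in\mathcal{E}_0=\mathrm{Span}\{\cos\varphi,\sin\varphi\}$ and $v_1\in\mathcal{E}_1$, introduces the quadratic (normal-form) correction $Q(v_0)$ in \eqref{defq}, proves exponential-type decay of $\norm{v_1-Q(v_0)}_{\dhil{4}}$ using the dissipation available on $\mathcal{E}_1$ (Lemmas \ref{F0F1}, \ref{regulF1} and \eqref{uh-psiest}), and then shows that the neutral amplitude obeys $a_1'=-\alpha|a_1|^2a_1+\text{errors}$ with $\mathrm{Re}\,\alpha=\tfrac{1}{24c^3}>0$, closing the argument by a bootstrap on the time $t_*$ in \eqref{testrella}. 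The boundedness of the solution thus hinges on the sign of a \emph{cubic} coefficient produced by the interaction of the $\pm1$ modes with the second harmonic through $Q(v_0)$ -- information invisible to (and not obtainable from) the quadratic energy estimate you propose. Indeed the resulting stability is only algebraic, $|a_1(t)|\sim K/\sqrt t$ (Theorem \ref{thestab}), which is incompatible with any argument yielding $\tfrac{d}{dt}\norm{v}_{H\e{4}}^2\le 0$ purely from linear dissipation plus smallness. To repair your proof you would have to isolate the $n=\pm1$ modes and carry out this normal-form/center-manifold-type analysis (or an equivalent Lyapunov argument detecting the cubic damping); the rest of your outline (local existence by contraction or mollification, positivity of $h$ via the embedding $H\e{4}\subset L\e{\infty}$, continuation) is consistent with what the paper does in Appendix \ref{appendix}.
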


\begin{nota}
The dependence of the right hand side of \eqref{cotathglobal} on $\varepsilon$ is far from optimal. Better estimates concerning the behaviour of $h$ for long times will be prove in Theorem \ref{thestab}.
\end{nota}

In order to prove Theorem \ref{thglobal} it is convenient to reformulate \eqref{equevol} in a rotating coordinate system at velocity $c$. Moreover, we also linearize around the constant solution $h=c$.

More precisely, we define
\begin{equation}\label{defvfuncionh}
v(\varphi,t)=h(\theta,t)-c\quad\textit{with}\quad\varphi=\theta-ct.
\end{equation} 

Then using \eqref{equevol} we obtain that $v$ solves
\begin{equation}\label{evoleq}
\frac{dv}{dt}=L(v)+R(v)
\end{equation}
where the linear operator $L:\dhil{4}\to L\e{2}(\T)$ is:
\begin{equation}\label{defl}
L(v)=-c^3(\partial_\varphi^2 v+\partial_\varphi^4 v)\quad\textit{for each}\quad v\in\dhil{4}
\end{equation}
and the non-linear operator $R:\dhil{4}\to L\e{2}(\T)$ is:
\begin{equation}\label{defr}
R(v)=-\dfi{}(\frac{v^2}{2}) -\dfi{}((v^3+3c^2v+3cv^2)(\partial_\varphi v+\partial_\varphi^3 v))
\end{equation}

It immediately follows that $L$ is a well defined operator from $\dhil{4}$ to $L\e{2}(\T)$. The fact that the operator $R$ is well defined from $\dhil{4}$ to $L\e{2}(\T)$ is just a consequence from the embedding $\dhil{s}\subset L\e{\infty}(\T)$ for any $s\geq 1$.

We recall that $V_0\equiv Span\{cos(\theta),sin(\theta)\}\subset L\e{2}(\T)$ and $V_1$ as the orthogonal of $V_0\oplus Span\{1\}$ in $L\e{2}(\T)$. 
We define the following subspaces of $\dhil{4}$,
\begin{equation}\label{e0e1}
\mathcal{E}_0=V_{0}\cap\dhil{4}\quad\textit{and}\quad\mathcal{E}_1=V_{1}\cap\dhil{4}
\end{equation}
and we denote as $P_0$ and $P_1$ the orthogonal projections of $L\e{2}(\T)$ into $V_0$ and $V_1$ respectively. We have that $\dhil{4}=\mathcal{E}_0\oplus\mathcal{E}_1$. Notice that, using Fourier, it readily follows that $P_0(\dhil{4})\subset \mathcal{E}_0$, and $P_1(\dhil{4})\subset\mathcal{E}_1$. In all this subsection, we only apply the operators $P_0$ and $P_1$ to functions of $\dhil{4}$.

Given $v\in\dhil{4}$, we can then write $v=v_0+v_1$ with $v_0=P_0(v)\in\mathcal{E}_0$ and $v_1=P_1(v)\in \mathcal{E}_{1}$. 

We defined the quadratic operator, $Q:\mathcal{E}_0\times\mathcal{E}_0\to\mathcal{E}_1$ as
\begin{equation}\label{defq}
Q(v_0)=\frac{i}{24c^3}a_{-1}^2 e^{-2i\varphi}-\frac{i}{24c^3}a_{1}^2 e^{2i\varphi}
\end{equation}
for each $v_0=a_{-1}e\e{-i\varphi}+a_1e\e{i\varphi}\in\mathcal{E}_0$, where $a_1=\overline{a_{-1}}$. We notice for further reference that 
\begin{equation}\label{ldeq}
L(Q(v_0))=\dfi{}P_1(\frac{v_0\e{2}}{2})
\end{equation}
which follows from \eqref{defl} as well as the fact that 
\begin{displaymath}
c\e{3}(\dfi{2}Q(v_0)+\dfi{4}Q(v_0))+\dfi{}P_1(\frac{v_0\e{2}}{2})=0.
\end{displaymath}

As a first step to prove Theorem \ref{thestab} we need a local existence result for \eqref{evoleq}. In order to avoid breaking the continuity of the arguments, this result will be postponed to the Appendix \ref{appendix} (cf. Proposition \ref{wellposedness}).

In the next two Lemmas we decompose $v$ as the sum of functions in $\mathcal{E}_0$ and $\mathcal{E}_1$ and rewrite \eqref{evoleq} in a convenient way to derive global a priori estimates for their solutions.

\begin{lem}\label{F0F1}
Suppose that $v\in C([0,T];\dhil{4})\cap C\e{1}((0,T];\dhil{4})$, with $T>0$, is a solution of the problem \eqref{evoleq}-\eqref{defr}.  Moreover, let us assume also that $\norm{v(\cdot,t)}_{\dhil{4}}\leq 1$ for each $t\in[0,T]$. Let be $v_0(\cdot,t)=P_0(v(\cdot,t))$ and $v_1(\cdot,t)=P_1(v_1(\cdot,t))$, then we have
\begin{align}\label{odeu0}
&\frac{dv_0}{dt}=-P_0\Big(\dfi{}\Big(v_0Q(v_0)\Big)\Big)-3c\e{2}P_0\Big(\dfi{}\Big(v_0(\dfi{}Q(v_0)+\dfi{3}Q(v_0))\Big)\Big)\\\nonumber
&+F_0,
\end{align}
\begin{equation}\label{uh-q}
\frac{d}{dt}(v_1-Q(v_0))-L(v_1-Q(v_0))=F_1
\end{equation} 
with $Q(v_0)$ as in \eqref{defq}, where \begin{equation}\label{F0}
\norm{F_0}_{L\e{2}(\T)}\leq C(\norm{v_0}_{\dhil{4}}\e{4}+\norm{v_1-Q(v_0)}\e{2}_{\dhil{4}}+\norm{v_0}_{\dhil{4}}\norm{v_1-Q(v_0)}_{\dhil{4}})
\end{equation} and 
\begin{equation}\label{F1}
\norm{F_1}_{L\e{2}(\T)}\leq C(\norm{v_0}\e{3}_{\dhil{4}}+\norm{v_1-Q(v_0)}\e{2}_{\dhil{4}}+C\norm{v_0}_{\dhil{4}}\norm{v_1-Q(v_0)}_{\dhil{4}}).
\end{equation}
The constant $C$ depends only on $c$ but is independent on $v$. 
\end{lem}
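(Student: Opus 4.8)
The plan is to project the evolution equation \eqref{evoleq} onto $\mathcal{E}_0$ and $\mathcal{E}_1$ and then to normalise away the quadratic resonance by subtracting the corrector $Q(v_0)$ of \eqref{defq}. Since $L$ is diagonal in the Fourier basis, with symbol $-c\e{3}\ell\e{2}(\ell\e{2}-1)$, it annihilates the modes $e\e{\pm i\varphi}$; hence $L(v_0)=0$ and $L$ commutes with $P_0$ and $P_1$. Applying these projections to \eqref{evoleq} yields
\[
\frac{dv_0}{dt}=P_0R(v),\qquad\frac{dv_1}{dt}-L(v_1)=P_1R(v).
\]
For the second equation I would invoke the identity \eqref{ldeq}, $L(Q(v_0))=\dfi{}P_1(\tfrac{v_0\e{2}}{2})$, together with the chain rule $\frac{d}{dt}Q(v_0)=DQ(v_0)[P_0R(v)]$ ($DQ$ being the linearisation of the quadratic map $Q$, and using $\frac{dv_0}{dt}=P_0R(v)$), to obtain
\[
\frac{d}{dt}(v_1-Q(v_0))-L(v_1-Q(v_0))=P_1R(v)+\dfi{}P_1(\tfrac{v_0\e{2}}{2})-DQ(v_0)[P_0R(v)],
\]
which is already of the form \eqref{uh-q} with $F_1$ the right-hand side. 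The first equation takes the form \eqref{odeu0} once $F_0$ is defined as $P_0R(v)$ minus the two cubic-in-$v_0$ terms exhibited in \eqref{odeu0}.

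Next I would expand $R(v)$ after writing $v=v_0+Q(v_0)+g$ with $g:=v_1-Q(v_0)$, exploiting two algebraic cancellations. First, $v_0\e{2}$ has Fourier support in $\{0,\pm2\}$, so $P_0\dfi{}(v_0\e{2})=0$ and, in the equation for $v_1-Q(v_0)$, the term $P_1\dfi{}(\tfrac{v_0\e{2}}{2})$ cancels exactly the term $\dfi{}P_1(\tfrac{v_0\e{2}}{2})$ generated by \eqref{ldeq}; this is why $F_1$ contains no term quadratic in $v_0$. Second, $\dfi{}v_0+\dfi{3}v_0=0$ since the symbol $i\ell(1-\ell\e{2})$ vanishes at $\ell=\pm1$, so $\dfi{}v+\dfi{3}v=\dfi{}v_1+\dfi{3}v_1$, which simplifies the expansion. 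A bookkeeping of Fourier supports ($v_0$ supported in $\{\pm1\}$, $Q(v_0)$ in $\{\pm2\}$, $P_0$ retaining $\{\pm1\}$, $P_1$ retaining $\{|\ell|\ge2\}$) then shows that the only contributions to $P_0R(v)$ which are cubic in $v_0$ when $g=0$ are precisely $-P_0\dfi{}(v_0Q(v_0))$ and $-3c\e{2}P_0\dfi{}(v_0(\dfi{}Q(v_0)+\dfi{3}Q(v_0)))$, i.e.\ the two terms in \eqref{odeu0}; all other terms of $P_0R(v)$ are quartic or higher in $v_0$ or carry at least one factor $g$, and they constitute $F_0$. In the equation for $v_1-Q(v_0)$ every surviving contribution — cubic and higher in $v_0$, those carrying a factor $g$, and $-DQ(v_0)[P_0R(v)]$ — goes into $F_1$.

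It remains to estimate $F_0$ and $F_1$ in $L\e{2}(\T)$. Since $v_0$ is a trigonometric polynomial of degree $1$ and $Q(v_0)$ one of degree $2$, one has $\norm{\dfi{k}v_0}_{L\e{\infty}}\le C\norm{v_0}_{\dhil{4}}$ and $\norm{\dfi{k}Q(v_0)}_{L\e{\infty}}\le C\norm{v_0}_{\dhil{4}}\e{2}$ for all $k$, while $g\in\dhil{4}\hookrightarrow C\e{3}$ gives $\norm{\dfi{k}g}_{L\e{\infty}}\le C\norm{g}_{\dhil{4}}$ for $k\le3$ and $\norm{\dfi{4}g}_{L\e{2}}\le\norm{g}_{\dhil{4}}$. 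Distributing the outer $\dfi{}$ over each monomial and applying Hölder's inequality (e.g.\ $\norm{\dfi{}(v_0\dfi{3}g)}_{L\e{2}}\le C\norm{v_0}_{\dhil{4}}\norm{g}_{\dhil{4}}$), each surviving term is bounded by $\norm{v_0}_{\dhil{4}}\e{k}$ with $k\ge4$ in $F_0$ and $k\ge3$ in $F_1$, by $\norm{v_0}_{\dhil{4}}\e{j}\norm{g}_{\dhil{4}}$ with $j\ge1$, or by $\norm{v_0}_{\dhil{4}}\e{j}\norm{g}_{\dhil{4}}\e{2}$ with $j\ge0$; the hypothesis $\norm{v(\cdot,t)}_{\dhil{4}}\le1$ (hence $\norm{v_0}_{\dhil{4}},\norm{g}_{\dhil{4}}\le1$) lets every higher power be absorbed into the leading one, yielding \eqref{F0} and \eqref{F1} with $C=C(c)$. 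I expect the main obstacle to be the combinatorial bookkeeping: one must check that the Fourier-support analysis is exhaustive, so that no further cubic-in-$v_0$ term is hidden inside $P_0R(v)$ (which would spoil the quartic bound required for $F_0$) and that the two main terms reappear with exactly the right constants; once this is settled, the analytic part is a routine collection of product estimates in $\dhil{4}$.
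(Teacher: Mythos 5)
Your proposal is correct and follows essentially the same route as the paper's proof: project \eqref{evoleq} with $P_0$ and $P_1$, use the identity \eqref{ldeq} to cancel the quadratic resonance $\dfi{}P_1(\tfrac{v_0^2}{2})$ against $L(Q(v_0))$, handle $\tfrac{d}{dt}Q(v_0)$ via the chain rule term $DQ(v_0)[\tfrac{dv_0}{dt}]$ (bounded through \eqref{odeu0}--\eqref{F0}), and close with product estimates in $\dhil{4}$. The only differences are presentational (you expand $v=v_0+Q(v_0)+g$ at once and track Fourier supports, whereas the paper first writes intermediate remainders $\tilde F_0,\tilde F_1$ and then substitutes $v_1=Q(v_0)+(v_1-Q(v_0))$), and the harmless imprecision that $\norm{v_1-Q(v_0)}_{\dhil{4}}$ is bounded by a constant depending on $c$ rather than by $1$, which does not affect the absorption of higher powers.
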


\begin{proof}
We will use repeatedly the following estimate
\begin{equation}\label{vhcota}
\norm{v_1}_{\dhil{4}}\leq\norm{v_1-Q(v_0)}_{\dhil{4}}+\norm{Q(v_0)}_{\dhil{4}}\leq \norm{v_1-Q(v_0)}_{\dhil{4}}+C\norm{v_0}_{\dhil{4}}\e{2}.
\end{equation}

Notice also that since $\norm{v}_{\dhil{4}}\leq 1$ then $\norm{v_0}_{\dhil{4}}+\norm{v_1}_{\dhil{4}}\leq C$.

Applying $P_0$ to \eqref{evoleq}, we obtain 
\begin{equation}\label{p0eq}
\frac{dv_0}{dt}+P_0(\dfi{}(v_0v_1))+3c\e{2}P_0(\dfi{}(v_0(\dfi{}v_1+\dfi{3}v_1)))=\tilde{F}_0
\end{equation}

 where $\tilde{F}_0$ is given by 
\begin{displaymath}
\tilde{F}_0=-P_0(\dfi{}(\frac{v_0\e{2}}{2}))-P_0(\dfi{}(\frac{v_1\e{2}}{2}))-P_0(\dfi{}((v\e{3}+3c\e{2}v_1+3cv\e{2})(\dfi{}v_1+\dfi{3}v_1)))
\end{displaymath} 
using that $\dfi{}v_0+\dfi{3}v_0=0$. 

Notice that  $P_0(\dfi{}(\frac{v_0\e{2}}{2}))=0$. On the other hand, $$\norm{P_0(\dfi{}(\frac{v_1\e{2}}{2}))}_{L\e{2}(\T)}\leq C\norm{v_1}\e{2}_{\dhil{4}}\leq C(\norm{v_0}\e{4}_{\dhil{4}}+\norm{v_1-Q(v_0)}_{\dhil{4}}\e{2})$$ and using \eqref{vhcota} we obtain $$\norm{P_0(\dfi{}((v\e{3}+3c\e{2}v_1+3cv\e{2})(\dfi{}v_1+\dfi{3}v_1)))}_{L\e{2}(\T)}\leq C\norm{v}_{\dhil{4}}\e{2}\norm{v_1}_{\dhil{4}}\leq C(\norm{v_0}\e{4}_{\dhil{4}}+\norm{v_1-Q(v_0)}_{\dhil{4}}\e{2}).$$

Therefore,
\begin{equation}\label{F0tilde}
\norm{\tilde{F}_0}_{L\e{2}(\T)}\leq C(\norm{v_0}\e{4}_{\dhil{4}}+\norm{v_1-Q(v_0)}_{\dhil{4}}\e{2})
\end{equation}

Writing in the second and third term in the left hand side of \eqref{p0eq}, $v_1=Q(v_0)+(v_1-Q(v_0))$ we have \eqref{odeu0} where $F_0$ is given by
\begin{displaymath}
F_0=\tilde{F}_0-P_0\Big(\dfi{}\Big(v_0(v_1-Q(v_0))\Big)\Big)-3c\e{2}P_0\Big(\dfi{}\Big(v_0(\dfi{}(v_1-Q(v_0)+\dfi{3}(v_1-Q(v_0))))\Big)\Big)
\end{displaymath}
Using \eqref{F0tilde} and the fact that 
\begin{displaymath}
\norm{P_0\Big(\dfi{}\Big(v_0(v_1-Q(v_0))\Big)\Big)+3c\e{2}P_0\Big(\dfi{}\Big(v_0(\dfi{}(v_1-Q(v_0)+\dfi{3}(v_1-Q(v_0))))\Big)\Big)}_{L\e{2}(\T)}\leq C(\norm{v_0}_{\dhil{4}}\norm{v_1-Q(v_0)}_{\dhil{4}})
\end{displaymath}
we have \eqref{F0}.

In the same way, if we apply $P_1$ to \eqref{evoleq} we have,
\begin{equation}\label{phtoeq}
\frac{dv_1}{dt}-L(v_1-Q(v_0))=\tilde{F}_1,
\end{equation}
where we have used \eqref{ldeq} and $\tilde{F}_1$ is given by
\begin{equation}\label{F1tildedef}
\tilde{F}_1=-\dfi{}P_1(v_0v_1)-\dfi{}P_1(\frac{v_1\e{2}}{2})+R_1(v)
\end{equation}
with $$R_1(v)=P_1(\dfi{}((v\e{3}+3c\e{2}v+3cv\e{2})(\dfi{}v_1+\dfi{3}v_1))),$$
since $\dfi{}v_0+\dfi{3}v_0=0$.

It is easy to see that, using \eqref{vhcota}
\begin{align*}
&\norm{\dfi{}P_1(v_0v_1)+\dfi{}P_1(\frac{v_1\e{2}}{2})}_{L\e{2}(\T)}\leq C\norm{v_0}_{\dhil{4}}\norm{v_1}_{\dhil{4}}+C\norm{v_1}\e{2}_{\dhil{4}}\leq C\norm{v_0}_{\dhil{4}}\norm{v_1-Q(v_0)}_{\dhil{4}}\\
&+C\norm{v_0}\norm{Q(v_0)}_{\dhil{4}}+C\norm{v_1}\e{2}_{\dhil{4}}\leq C(\norm{v_0}\e{3}_{\dhil{4}}+\norm{v_1-Q(v_0)}_{\dhil{4}}\e{2}+\norm{v_0}_{\dhil{4}}\norm{v_1-Q(v_0)}_{\dhil{4}})\\
&\norm{R_1(v)}_{L\e{2}(\T)}\leq C\norm{v}\norm{v_1}_{\dhil{4}}\leq C(\norm{v_0}_{\dhil{4}}\e{3}+\norm{v_1-Q(v_0)}_{\dhil{4}}\e{2}+\norm{v_0}_{\dhil{4}}\norm{v_1-Q(v_0)}_{\dhil{4}})
\end{align*}
hence, we have
\begin{equation}\label{F1tilde}
\norm{\tilde{F}_1}_{L\e{2}(\T)}\leq C(\norm{v_0}_{\dhil{4}}\e{3}+\norm{v_1-Q(v_0)}_{\dhil{4}}\e{2}+\norm{v_0}_{\dhil{4}}\norm{v_1-Q(v_0)}_{\dhil{4}})
\end{equation}

We can compute $\frac{d}{dt}(v_1-Q(v_0))$ using the chain rule,
\begin{equation*}
\frac{d}{dt}(v_1-Q(v_0))=\frac{dv_1}{dt}-DQ(v_0)(\frac{dv_0}{dt}),
\end{equation*}
therefore, by \eqref{phtoeq}, we obtain \eqref{uh-q} with $F_1=\tilde{F}_1-DQ(v_0)(\frac{dv_0}{dt})$.

Using $\norm{DQ(v_0)}_{L\e{2}(\T)}\leq C\norm{v_0}_{\dhil{4}}$ as well as \eqref{odeu0} and \eqref{F0} we arrive at
 $$\norm{DQ(v_0)\frac{dv_0}{dt}}_{L\e{2}(\T)}\leq C(\norm{v_0}_{\dhil{4}}\e{4}+\norm{v_0}_{\dhil{4}}\norm{v_1-Q(v_0)}\e{2}_{\dhil{4}}+\norm{v_0}\e{2}_{\dhil{4}}\norm{v_1-Q(v_0)}_{\dhil{4}}).$$ 
 
 Combining this inequality with \eqref{F1tilde}, we obtain \eqref{F1} and the Lemma follows.
\end{proof} 

\begin{lem}\label{regulF1}
Suppose that the assumptions of Lemma \ref{F0F1} are satisfied. There exist $\delta_1\in(0,1)$ depending only on $c$ such that if $\norm{v(\cdot,t)}_{\dhil{4}}\leq\delta_1$ for any $t\in(0,T]$. Then we have that $v\in C\e{\infty}((0,T];\dhil{\ell})$ for all $\ell\geq 1$. Moreover, the following estimate holds:
\begin{equation}\label{derivF1}
\int_{\T}\dfi{4}F_1\dfi{4}(v_1-Q(v_0))d\varphi\leq C(\norm{v}\e{3}_{\dhil{4}}\norm{v_1-Q(v_0)}_{\dhil{6}}+\norm{v}_{\dhil{4}}\norm{v_1-Q(v_0)}\e{2}_{\dhil{6}})\quad\textit{for any}\quad t>0
\end{equation}
with $F_1$ as in \eqref{F1} and $C$ depending only on $c$.
\end{lem}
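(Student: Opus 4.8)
The argument has two parts, corresponding to the two assertions. For the regularity statement the point is that equation \eqref{uh-q} for $w:=v_1-Q(v_0)$ is, once the quasilinear principal part is extracted from $F_1$, a uniformly parabolic fourth order equation on $\mathcal{E}_1$. Using \eqref{F1tildedef} and the identity $v^3+3c^2v+3cv^2=(v+c)^3-c^3$ one rewrites it as $\frac{dw}{dt}=\mathcal{A}(v)w+g$, where
\[
\mathcal{A}(v)w:=L(w)+P_1\big(((v+c)^3-c^3)(\partial_\varphi^2 w+\partial_\varphi^4 w)\big)
\]
has principal symbol $\big((v+c)^3-2c^3\big)k^4$ and $g$ gathers the terms involving at most three derivatives of $w$ together with the smooth, finite-mode contributions of $Q(v_0)$, of $DQ(v_0)\frac{dv_0}{dt}$, and of the ODE \eqref{odeu0} for the two dimensional component $v_0$. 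The plan is to fix $\delta_1\in(0,1)$, depending only on $c$, small enough that through the embedding $\dhil{4}\hookrightarrow C^3(\T)$ the bound $\norm{v}_{\dhil{4}}\le\delta_1$ makes $\norm{v}_{L^\infty}$ and $\norm{\partial_\varphi v}_{L^\infty}$ so small that $(v+c)^3-2c^3\le-c^3/2$ and the $v$-dependent lower order coefficients of $\mathcal{A}(v)$ are dominated by the dissipation; then $\mathcal{A}(v)$ is uniformly elliptic and $-\langle\mathcal{A}(v)w,w\rangle_{\dhil{m}}\ge\frac{c^3}{4}\norm{w}_{\dhil{m+2}}^2-C\norm{w}_{\dhil{m}}^2$. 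Parabolic regularity for such quasilinear equations — energy estimates in the spaces $\dhil{m}$, $m\ge4$, carried with suitable powers of $\min(t,1)$ as weights and bootstrapped from the hypothesis $\norm{w(\cdot,t)}_{\dhil{4}}\le\delta_1$, followed by differentiation in $t$ — yields $w\in C^\infty((0,T];\dhil{\ell})$ for every $\ell$. Since $v_0$ lies in a finite dimensional space and solves \eqref{odeu0} with right hand side controlled by $\dot H^4$ norms, $v_0$ and $Q(v_0)$ are also $C^\infty$ in $t$, hence $v=v_0+Q(v_0)+w\in C^\infty((0,T];\dhil{\ell})$; in particular all the norms in \eqref{derivF1} are finite for $t>0$.

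For \eqref{derivF1} I would first note that the left hand side equals the $\dhil{4}$ pairing $\langle F_1,w\rangle_{\dhil{4}}$, the projections $P_1$ in $F_1$ being harmless against $\partial_\varphi^4 w\in V_1$. I would then split $F_1=\tilde F_1-DQ(v_0)\frac{dv_0}{dt}$ and $\tilde F_1$ into the terms of \eqref{F1tildedef}, writing $R_1(v)=P_1\big[3(v+c)^2\partial_\varphi v\,(\partial_\varphi v_1+\partial_\varphi^3 v_1)+((v+c)^3-c^3)(\partial_\varphi^2 v_1+\partial_\varphi^4 v_1)\big]$, and estimate term by term. Two devices are used systematically. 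First, $\dhil{4}\hookrightarrow C^3(\T)$, so that any factor among $v,\partial_\varphi v,\partial_\varphi^2 v,\partial_\varphi^3 v$ put in $L^\infty$ costs just one power of $\norm{v}_{\dhil{4}}$, while each occurrence of $Q(v_0)$ or its derivatives, being quadratic in $v_0$, supplies a factor $\norm{v_0}_{\dhil{4}}^2\le\norm{v}_{\dhil{4}}^2$. Second, since $w\in\mathcal{E}_1$ has Fourier support in $\{|k|\ge2\}$, one has $\norm{w}_{\dot H^s(\T)}\le 2^{-(6-s)}\norm{w}_{\dhil{6}}$ for $0\le s\le6$, so any lower order Sobolev norm of $w$ may be replaced by $\norm{w}_{\dhil{6}}$ at no cost. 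For the genuinely fourth order contributions — those coming from $((v+c)^3-c^3)\partial_\varphi^4 v_1$ and $3(v+c)^2\partial_\varphi v\,\partial_\varphi^3 v_1$ — I would integrate by parts twice inside $\int\partial_\varphi^4(\cdot)\,\partial_\varphi^4 w\,d\varphi$ so as to reach $\int\partial_\varphi^2(\cdot)\,\partial_\varphi^6 w\,d\varphi$ (and, where needed, integrate by parts once more to convert $\partial_\varphi^5 w\,\partial_\varphi^6 w$ into $(\partial_\varphi^5 w)^2$ times a differentiated coefficient); inserting $v_1=w+Q(v_0)$ and then invoking Moser's product estimate $\norm{fh}_{H^s}\le C(\norm{f}_{L^\infty}\norm{h}_{H^s}+\norm{f}_{H^s}\norm{h}_{L^\infty})$ together with the Agmon inequality for the $L^\infty$ norms reduces everything to integrands of the required form, the worst one being $\int((v+c)^3-c^3)(\partial_\varphi^6 w)^2\,d\varphi$, which is $\le\norm{(v+c)^3-c^3}_{L^\infty}\norm{w}_{\dhil{6}}^2\le C\norm{v}_{\dhil{4}}\norm{w}_{\dhil{6}}^2$. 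The remaining, lower order, terms, namely $-\partial_\varphi P_1(v_0v_1)$, $-\partial_\varphi P_1(v_1^2/2)$ and $DQ(v_0)\frac{dv_0}{dt}$ — the last controlled via \eqref{odeu0}, \eqref{F0} and $\norm{DQ(v_0)}\le C\norm{v_0}_{\dhil{4}}$ — contribute, after the same integrations by parts and Moser estimates, only terms of the type $\norm{v}_{\dhil{4}}^3\norm{w}_{\dhil{6}}$. Summing everything and collapsing the intermediate powers with $\norm{v}_{\dhil{4}}\le\delta_1<1$ gives \eqref{derivF1}.

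The delicate point is the bookkeeping in this second part: the integrations by parts must be arranged so that no Sobolev norm of $w$ beyond $\dhil{6}$ ever appears, and so that every contribution carries at least the powers of $\norm{v}_{\dhil{4}}$ dictated by the right hand side of \eqref{derivF1}. In particular products such as $\partial_\varphi^4(v_1^2)$ must not be estimated by brute-force Leibniz, which would produce spurious $\norm{w}_{\dhil{6}}^3$ terms, but through the Moser estimate, which keeps the top derivatives on a single factor while the other remains in $L^\infty$ and thus only sees $\norm{v}_{\dhil{4}}$. The regularity claim, by contrast, is a fairly routine application of parabolic smoothing once the quasilinear structure of \eqref{uh-q} has been made explicit.
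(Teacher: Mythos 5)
Your proposal follows essentially the same route as the paper: split $F_1=\tilde F_1-DQ(v_0)\frac{dv_0}{dt}$, treat the $DQ$ term via finite-dimensionality of $\mathcal{E}_0$ together with \eqref{odeu0}--\eqref{F0}, then handle $\tilde F_1$ term by term by inserting $v_1=Q(v_0)+(v_1-Q(v_0))$, integrating by parts to reach at most $\partial_\varphi^6(v_1-Q(v_0))$, and exploiting both the smoothness of the finite-mode factors $v_0,Q(v_0)$ and the monotonicity of $\dot H^s$ norms on $\mathcal{E}_1$; the regularity claim is a routine parabolic-smoothing argument in both cases (the paper works with $v$ directly rather than $w$, and notes in passing that your added piece in $\mathcal{A}(v)$ should carry a minus sign so that the principal coefficient is $-(v+c)^3$ rather than $(v+c)^3-2c^3$, but this slip does not affect parabolicity).
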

\begin{proof}
Using that $\norm{v}_{L\e{\infty}(\T)}+\norm{\nabla v}_{L\e{\infty}(\T)}+\norm{\nabla\e{2}v}_{L\e{\infty}(\T)}\leq C\norm{v}_{\dhil{4}}$ it readily follows that \eqref{evoleq} is an uniformly parabolic problem. Hence for the fact that $v\in C\e{\infty}((0,T];\dhil{\ell})$ for all $\ell\geq 1$ just follows by standard regularizing effects for quasilinear parabolic equations, assuming that $\delta_1$ is sufficiently small (cf. \cite{eidelman} in Chapter II, Theorem 4.2.).

As we see in the proof of the Lemma \ref{F0F1}, $F_1=\tilde{F}_1-DQ(v_0)(\frac{dv_0}{dt})$ where $\tilde{F}_1$ is as in \eqref{F1tildedef}. Therefore
\begin{displaymath}
\int_{\T}\dfi{4}F_1\dfi{4}(v_1-Q(v_0))d\varphi=\int_{\T}\dfi{4}\tilde{F}_1\dfi{4}(v_1-Q(v_0))d\varphi-\int_{\T}\dfi{4}(DQ(v_0)(\frac{dv_0}{dt}))\dfi{4}(v_1-Q(v_0))d\varphi\equiv J_1+J_2.
\end{displaymath}

To estimate $J_2$ we remark that $\frac{dv_0}{dt}\in\mathcal{E}_0$ as well as that for any $w\in\mathcal{E}_0$ we have $\norm{w}_{\dhil{\ell}}\leq C_{\ell}\norm{w}_{\dhil{4}}$ for any $\ell\geq 1$. On the other hand, the definition of $Q$ (cf. \eqref{defq}) implies that $\norm{DQ(v_0)}_{\dhil{\ell}}\leq C_{\ell}\norm{v_0}_{\dhil{4}}$ for $\ell\geq 1$. Then, $\norm{\dfi{4}(DQ(v_0)(\frac{dv_0}{dt}))}_{L\e{2}(\T)}\leq C\norm{v_0}_{\dhil{4}}\norm{\frac{dv_0}{dt}}_{\dhil{4}}$. Therefore using \eqref{odeu0} and \eqref{F0}, we obtain 
\begin{align*}
&\norm{\dfi{4}(DQ(v_0)(\frac{dv_0}{dt}))}_{L\e{2}(\T)}\leq C(\norm{v_0}_{\dhil{4}}\e{4}+\norm{v_0}_{\dhil{4}}\norm{v_1-Q(v_0)}_{\dhil{4}}\e{2}+\norm{v_0}\e{2}_{\dhil{4}}\norm{v_1-Q(v_0)}_{\dhil{4}})\\
&\leq C\norm{v}\e{3}_{\dhil{4}}
\end{align*}

Thus, 
\begin{equation}\label{J2}
J_2\leq C(\norm{v}\e{3}_{\dhil{4}}\norm{v_1-Q(v_0)}_{\dhil{4}})
\end{equation}

In order to estimate $J_1$ for $t>0$ we rewrite it, using \eqref{F1tildedef}, as
\begin{align*}
&\int_{\T}\dfi{4}\tilde{F}_1\dfi{4}(v_1-Q(v_0))d\varphi=-\int_{\T}\dfi{5}P_1(v_0v_1)\dfi{4}(v_1-Q(v_0))d\varphi-\int_{\T}\dfi{5}P_1(\frac{v_1\e{2}}{2})\dfi{4}(v_1-Q(v_0))d\varphi\\
&+\int_{\T}\dfi{4}R_1(v)\dfi{4}(v_1-Q(v_0))d\varphi\equiv J_{1,1}+J_{1,2}+J_{1,3}
\end{align*}

We first estimate $J_{1,1}$, using again that for any $w\in\mathcal{E}_0$ we have $\norm{w}_{\dhil{\ell}}\leq C_{\ell}\norm{w}_{\dhil{4}}$ for any $\ell\geq 1$ as well as integration by parts. Then, 

\begin{align*}
&J_{1,1}=\int_{\T}\dfi{5}P_1(v_0Q(v_0))\dfi{4}(v_1-Q(v_0))d\varphi+\int_{\T}\dfi{5}P_1(v_0(v_1-Q(v_0)))\dfi{4}(v_1-Q(v_0))d\varphi\\
&\leq C\norm{v_0}_{\dhil{4}}\e{3}\norm{v_1-Q(v_0)}_{\dhil{4}}+\int_{\T}\dfi{3}P_1(v_0(v_1-Q(v_0))\dfi{6}(v_1-Q(v_0))d\varphi\\
&\leq C(\norm{v_0}\e{3}_{\dhil{4}}\norm{v_1-Q(v_0)}_{\dhil{4}}+\norm{v_0}_{\dhil{4}}\norm{v_1-Q(v_0)}_{\dhil{3}}\norm{v_1-Q(v_0)}_{\dhil{6}})\\
&\leq C(\norm{v}\e{3}_{\dhil{4}}\norm{v_1-Q(v_0)}_{\dhil{6}}+\norm{v}_{\dhil{4}}\norm{v_1-Q(v_0)}_{\dhil{6}}\e{2})
\end{align*}

Similarly,
\begin{align*}
&J_{1,2}=\int_{\T}\dfi{5}(\frac{Q(v_0)\e{2}}{2})\dfi{4}(v_1-Q(v_0))d\varphi+2\int_{\T}\dfi{5}P_1(Q(v_0)(v_1-Q(v_0)))\dfi{4}(v_1-Q(v_0))d\varphi\\
&+\int_{\T}\dfi{5}P_1((v_1-Q(v_0))\e{2})\dfi{4}(v_1-Q(v_0))d\varphi\leq C\norm{v_0}_{\dhil{4}}\e{4}\norm{v_1-Q(v_0)}_{\dhil{4}}\\
&+2\int_{\T}\dfi{3}P_1(Q(v_0)(v_1-Q(v_0)))\dfi{6}(v_1-Q(v_0))d\varphi+\int_{\T}\dfi{3}P_1((v_1-Q(v_0))\e{2})\dfi{6}(v_1-Q(v_0))d\varphi\\
&\leq C(\norm{v_0}_{\dhil{4}}\e{4}\norm{v_1-Q(v_0)}_{\dhil{4}}+\norm{v_0}_{\dhil{4}}\e{2}\norm{v_1-Q(v_0)}_{\dhil{3}}\norm{v_1-Q(v_0)}_{\dhil{6}}\\
&+\norm{v_1-Q(v_0)}_{\dhil{4}}\e{2}\norm{v_1-Q(v_0)}_{\dhil{6}})\leq C(\norm{v}\e{3}_{\dhil{4}}\norm{v_1-Q(v_0)}_{\dhil{6}}+\norm{v}_{\dhil{4}}\norm{v_1-Q(v_0)}_{\dhil{6}}\e{2})
\end{align*}

Integrating by parts in $J_{1,3}$, we arrive at
\begin{align*}
&J_{1,3}=\int_{\T}\dfi{3}P_1((v\e{3}+3c\e{2}v+3cv\e{2})(\dfi{}Q(v_0)+\dfi{3}Q(v_0)))\dfi{6}(v_1-Q(v_0))d\varphi\\
&+\int_{\T}\dfi{3}P_1((v\e{3}+3c\e{2}v+3cv\e{2})(\dfi{}(v_1-Q(v_0))+\dfi{3}(v_1-Q(v_0))))\dfi{6}(v_1-Q(v_0))d\varphi\equiv J_{1,3,1}+J_{1,3,2}
\end{align*}

Then
\begin{displaymath}
J_{1,3,1}\leq C\norm{v}_{\dhil{4}}\norm{v_0}\e{2}_{\dhil{4}}\norm{v_1-Q(v_0)}_{\dhil{6}}
\end{displaymath}
and 
\begin{displaymath}
J_{1,3,2}\leq C\norm{v}_{\dhil{4}}\norm{v_1-Q(v_0)}\e{2}_{\dhil{6}}
\end{displaymath}
whence 
\begin{displaymath}
J_{1,3}\leq C(\norm{v}\e{3}_{\dhil{4}}\norm{v_1-Q(v_0)}_{\dhil{6}}+\norm{v}_{\dhil{4}}\norm{v_1-Q(v_0)}\e{2}_{\dhil{6}})
\end{displaymath}

Therefore 
\begin{displaymath}
J_{1}=J_{1,1}+J_{1,2}+J_{1,3}\leq C(\norm{v}\e{3}_{\dhil{4}}\norm{v_1-Q(v_0)}_{\dhil{6}}+\norm{v}_{\dhil{4}}\norm{v_1-Q(v_0)}\e{2}_{\dhil{6}})
\end{displaymath}

Combining this estimate with \eqref{J2} and the result follows.
\end{proof}

We can now conclude the proof of Theorem \ref{thglobal}.

\begin{proof}[Proof of Theorem \ref{thglobal}]
Let $\norm{v_{in}}_{\dhil{4}}\leq \varepsilon$. If $\varepsilon>0$ is sufficiently small it follows from Proposition \ref{wellposedness} that $\norm{v(\cdot,t)}_{\dhil{4}}\leq \delta_1$ for $t\in(0,T]$. Then Lemma \ref{regulF1} implies that $v\in C\e{\infty}((0,T];\dhil{8})$. Moreover, using \eqref{cotadatoinicial} we can assert that 
\begin{equation}\label{cotadeepsilon}
\norm{v(\cdot,t)}_{\dhil{4}}\leq C_*\varepsilon\quad\textit{for}\quad t\in[0,T]
\end{equation}
where we can assume that $C_*\geq 4K$ for $K=\sqrt{12c\e{3}}$.

Differentiating \eqref{uh-q} four times with respect to $\varphi$ for $t>0$ and multiplying by $\dfi{4}(v_1-Q(v_0))$, we obtain using integration by parts
\begin{align*}
&\frac{d}{dt}\norm{v_1-Q(v_0)}_{\dhil{4}}\e{2}+c\e{3}\int_{\T}(\dfi{6}(v_1-Q(v_0)))\e{2}-(\dfi{5}(v_1-Q(v_0)))\e{2}d\varphi=\int_{\T}\dfi{4}F_1\dfi{4}(v_1-Q(v_0))d\varphi
\end{align*}

Using Fourier as well as the fact that $v_1-Q(v_0)\in\mathcal{E}_1$ we obtain 
\begin{displaymath}
c\e{3}\int_{\T}(\dfi{6}(v_1-Q(v_0)))\e{2}-(\dfi{5}(v_1-Q(v_0)))\e{2}d\varphi\geq C_0\norm{v_1-\psi(v_0)}_{\dhil{6}}\e{2},
\end{displaymath}
for some $C_0>0$ (depending on $c$).

Using Lemma \ref{regulF1} we then arrive at
\begin{align*}
&\frac{d}{dt}\norm{v_1-Q(v_0)}_{\dhil{4}}\e{2}\leq -C_0\norm{v_1-Q(v_0)}_{\dhil{6}}\e{2}+C(\norm{v}\e{3}_{\dhil{4}}\norm{v_1-Q(v_0)}_{\dhil{6}}+\norm{v}_{\dhil{4}}\norm{v_1-Q(v_0)}\e{2}_{\dhil{6}})
\end{align*}
and using Young and \eqref{vhcota} we deduce that
\begin{align*}
&\frac{d}{dt}\norm{v_1-Q(v_0)}_{\dhil{4}}\e{2}\leq -\frac{C_0}{2}\norm{v_1-Q(v_0)}_{\dhil{6}}\e{2}+C_1\norm{v_0}\e{6}_{\dhil{4}}+C_2\norm{v}_{\dhil{4}}\norm{v_1-Q(v_0)}\e{2}_{\dhil{6}}.
\end{align*}

We define $\delta=\min\{\delta_0,\delta_1,\frac{C_0}{4C_2}\}$ with $\delta_0$, $\delta_1$ as in Proposition \ref{wellposedness} and Lemma \ref{regulF1}, respectively. Choosing $\varepsilon$ sufficiently small we obtain that $\norm{v(\cdot,t)}_{\dhil{4}}\leq\delta$ for $0\leq t\leq T$. Then, 
\begin{equation}\label{cotaevolv1-q}
\frac{d}{dt}\norm{v_1-Q(v_0)}_{\dhil{4}}\e{2}\leq -\frac{C_0}{4}\norm{v_1-Q(v_0)}_{\dhil{4}}\e{2}+C_1\norm{v_0}\e{6}_{\dhil{4}}.
\end{equation}

Notice that if the solution $v$ can be extended to a larger time interval, the proof of \eqref{cotaevolv1-q} implies that this inequality is valid as long as $\norm{v}_{\dhil{4}}\leq \delta$. Thus, using that $\norm{v_{in}}_{\dhil{4}}\leq \varepsilon$, we have
\begin{equation}\label{uh-psiest}
\norm{v_1-Q(v_0)}_{\dhil{4}}\e{2}\leq C_3\varepsilon\e{2} e\e{-\frac{C_0}{2}t}+C_1\int_{0}\e{t}\norm{v_0}_{\dhil{4}}\e{6}(s)e\e{-\frac{C_0}{4}(t-s)}ds
\end{equation}

Now using \eqref{defq} we can readily see that
\begin{align*}
&P_0\Big(\dfi{}\Big(v_0Q(v_0)\Big)\Big)=\frac{1}{24c^3}\Big(a_1 a_{-1}^2e^{-i\varphi}+a_{-1}a_{1}^2e^{i\varphi}\Big),\\
&P_0\Big(3c^2\dfi{}\Big(v_0(\dfi{}Q(v_0)+\dfi{3}Q(v_0))\Big)\Big)=\frac{3i}{4c}\Big(a_{1}a_{-1}^2e^{-i\varphi}-a_{-1}a_{1}^2e^{i\varphi}\Big).
\end{align*}
therefore we can write \eqref{odeu0} as
\begin{align*}
&a'_1(t)e^{i\varphi}+a'_{-1}(t)e^{-i\varphi}=\Big(-\frac{1}{24c^3}-\frac{3i}{4c}\Big)a_1 a_{-1}^2 e^{-i\varphi}+\Big(-\frac{1}{24c^3}+\frac{3i}{4c}\Big)a_{-1}a_1^2 e^{i\varphi}\\
&+\mathcal{O}(\norm{F_0}_{L\e{2}(\T)}).
\end{align*}

Since $v_0\in\R$ it is clear that $a_{-1}=\overline{a_1}$ and using Lemma \ref{F0F1}, we obtain
\begin{equation}\label{ecuaciona1}
a'_1(t)+\alpha\abs{a_1}^2 a_{1}\leq C_4(\norm{v_0}^4_{\dhil{4}}+\norm{v_1-Q(v_0)}\e{2}_{\dhil{4}}+\norm{v_0}_{\dhil{4}}\norm{v_1-Q(v_0)}_{\dhil{4}})
\end{equation}
where $\alpha=\Big(\frac{1}{24c^3}-\frac{3i}{4c}\Big)$. 
%

We now define 
\begin{equation}\label{testrella}
t_*=\sup\{t\geq 0: \norm{v_0(\cdot,s)}_{\dhil{4}}\leq M\varepsilon\},\quad\textit{for all}\quad s\in[0,t]
\end{equation}
where $M> 2$ (we could assume for instance $M=3$ in all the following). Using \eqref{cotadeepsilon} it follows that choosing $\varepsilon$ sufficiently small, we have $t_*\geq T$.

Using the polar representation $a_1(t)=\rho(t)e^{i\gamma(t)}$  and using \eqref{uh-psiest}, \eqref{ecuaciona1} and \eqref{testrella}, we arrive at
\begin{align*}
&\abs{\rho'(t)+Re(\alpha)\rho^3(t)}\leq C_4(M\e{4}\varepsilon\e{4}+C_3\varepsilon\e{2}e\e{-\frac{C_0}{2}t}+\frac{4C_1}{C_0}M\e{6}\varepsilon\e{6}+\sqrt{C_3}M\varepsilon\e{2}e\e{-\frac{C_0}{4}t}+\Big(\frac{4C_1}{C_0}\Big)\e{\frac{1}{2}}M\e{4}\varepsilon\e{4}\Big)
\end{align*}
whence
\begin{equation}\label{ecuacionro}
\rho'(t)\leq -Re(\alpha)\rho\e{3}(t)+C_5 M\e{4}\varepsilon\e{4}+C_6 M\varepsilon\e{2}e\e{-\frac{C_0}{4}t}\quad\textit{for}\quad 0\leq t\leq t_*,
\end{equation}
where $C_5$ and $C_6$ depend on $C_1, C_3$ and $C_4$.

Suppose that $t_*<\infty$. Note that $\abs{a_1}=\rho(t)=\frac{\norm{v_0}_{L\e{2}(\T)}}{\sqrt{4\pi}}\leq\norm{v_0}_{\dhil{4}}$. Therefore since $\rho(t)$ is a continuous function, there exists $\bar{t}\in(0,t_*)$ such that $\rho(\bar{t})=\frac{M}{2}\varepsilon$ and $\frac{M}{2}\varepsilon\leq\rho(t)\leq M\varepsilon$ for $t\in[\bar{t},t_*]$. Thus \eqref{ecuacionro} implies that, for $\varepsilon\leq\frac{Re(\alpha)}{8MC_5}$,
\begin{displaymath}
\rho'(t)\leq -Re(\alpha)\frac{M\e{3}}{8}\varepsilon\e{3}+C_5 M\e{4}\varepsilon\e{4}+C_6 M\varepsilon\e{2}e\e{-\frac{C_0}{4}t}\leq C_6 M\varepsilon\e{2}e\e{-\frac{C_0}{4}t} 
\end{displaymath}
for $t\in[\bar{t},t_*]$.

Hence, 
\begin{displaymath}
\rho(t)\leq \frac{M}{2}\varepsilon+\frac{4C_6}{C_0}M\varepsilon\e{2}\leq \frac{2M\varepsilon}{3}\quad\textit{for}\quad t\in[\bar{t},t_*],
\end{displaymath}
if $\varepsilon$ is sufficiently small depending only on $C_0$ and $C_6$. However, this contradicts the definition of $t_*$ in \eqref{testrella}.

Then, $\norm{v_0}_{\dhil{4}}\leq C\varepsilon$ for   $0\leq t<\infty$ and \eqref{uh-psiest} implies $\norm{v_1-Q(v_0)}_{\dhil{4}}\leq C\varepsilon$. Thus, the Theorem \ref{thglobal} follows.

\end{proof}

A more detailed characterization of the asymptotic behaviour of $h$ can be obtain using Center Manifold Theory. A version of this theory that can be applied to quasilinear systems, has been developed in \cite{Mielke}. We will use the version of this theory that can be found in \cite{centermanif}.

We recall the assumptions required to apply the Center Manifold Theory in \cite{centermanif}.  Actually, we adapt them to the particular functional setting that we use in this paper:
\begin{hip}\label{hip1}
$L$ and $R$ defined above have the following properties:
\begin{itemize}
\item[(i)]$L\in\mathcal{L}(\dot{H}^4(\T),L\e{2}(\T))$.
\item[(ii)]For some $k\geq 2$, there exists a neighbourhood $\mathcal{V}\subset\dot{H}^4(\T)$ of $0$ such that $R\in\mathcal{C}^k(\mathcal{V};L\e{2}(\T))$, $R(0)=0$ and $DR(0)=0$.
\end{itemize}
\end{hip}
\begin{hip}\label{hip2}
The spectrum $\sigma$ of the linear operator $L$ can be written as $\sigma=\sigma_-\cup\sigma_0$ where $\sigma_-=\{\lambda\in\sigma; Re\lambda<0\}$ and $\sigma_0=\{\lambda\in\sigma; Re\lambda=0\}$. We assume that
\begin{itemize}
\item[(i)]there exist a positive constant $\gamma>0$ such that $$\sup_{\lambda\in\sigma_-}(Re\lambda)<-\gamma,$$ 
\item[(ii)]$\sigma_0$ consists of a finite number of eigenvalues with finite multiplicities. 
\end{itemize}
\end{hip}

\begin{hip}\label{hip3}
Assume that there exist positive constants $ s_0>0$ and $C>0$ such that for all $ s\in\R$, with $\abs{s}\geq s_0$, we have that
\begin{equation}\label{esthip3}
\norm{(i s I-L_1)^{-1}}_{\mathcal{L}(L\e{2}(\T))}\leq\frac{C}{\abs{ s}}.
\end{equation}

Here, $L_1$ is the restriction of $L$ to $P_1\dot{H}^4(\T)$ where $P_1$ is the projection $P_1:L\e{2}(\T)\to L\e{2}(\T)$ defined by $P_1=\mathbb{I}-P_0$ where $P_0$ is the spectral projection corresponding to $\sigma_0$ that is given by:
\begin{equation}\label{defp0}
P_0=\frac{1}{2\pi i}\int_{\Gamma}(\lambda\mathbb{I}-L)\e{-1}d\lambda
\end{equation}
where $\Gamma$ is a simple, counterclockwise oriented, Jordan curve surrounding $\sigma_0$ and lying entirely in $\{\lambda\in\C:Re\lambda>-\gamma\}$.
\end{hip}
The following result is a minor adaptation of Theorems 2.9 and 3.22 in \cite{centermanif}. It is important to note that since we are working with the Hilbert spaces $L\e{2}(\T)$ and $\dot{H}\e{4}(\T)$ the only property that we need to check for the operator $L_1$ is \eqref{esthip3}, due to Remark 2.16 in \cite{centermanif}.
\begin{thm}\label{thmmani}
Assume that Hypotheses \ref{hip1}, \ref{hip2} and \ref{hip3} hold. Then there exists a map $\psi\in\mathcal{C}\e{k}(\mathcal{E}_0,\mathcal{E}_1)$ where $\mathcal{E}_0=ImP_0=ReP_1\subset\dot{H}^4(\T)$ and $\mathcal{E}_1=P_1\dot{H}^4\subset\dot{H}^4(\T)$, with $\psi(0)=0$ and $D\psi(0)=0$.
Moreover there exists a neighbourhood $\mathcal{O}$ of $0$ in $\dot{H}^4(\T)$ such that the manifold 
\begin{equation}\label{manifold}
\mathcal{M}_0=\{v_0+\psi(v_0);v_0\in\mathcal{E}_0\}\subset\dot{H}^4(\T)
\end{equation} has the following properties:
\begin{itemize}
\item[(i)]$\mathcal{M}_0$ is locally invariant, i.e., if $v$ is a solution of \eqref{evoleq} satisfying $v(0)\in\mathcal{M}_0\cap\mathcal{O}$
and $v(t)\in\mathcal{O}$ for all $t\in[0,T]$, then $v(t)\in\mathcal{M}_0$ for all $t\in[0,T]$.
\item[(ii)] $\mathcal{M}_0$ contains the set of bounded solutions of  \eqref{evoleq} staying in $\mathcal{O}$ for all $t\in\R$. If we have a solution $v$ of $\frac{dv}{dt}=L(v)+R(v)$ that belongs in $\mathcal{M}_0$ for $t\in I$, being $I\subset\R$ an open interval. Then, $v=v_0+\psi(v_0)$ and $v_0$ satisfies 
\begin{equation}\label{v0}
\frac{dv_0}{dt}=L_0(v_0)+P_0R(v_0+\psi(v_0)),
\end{equation}
where $L_0$ is the restriction of $L$ to $\mathcal{E}_0$.
Moreover, $\psi$ satisfies 
\begin{equation}\label{psi}
D\psi(v_0)(L_0(v_0)+P_0R(v_0+\psi(v_0)))=L_1\psi(v_0)+P_1R(v_0+\psi(v_0))\quad\forall v_0\in\mathcal{E}_0
\end{equation}
\item[(iii)] $\mathcal{M}_0$ is locally attracting, i.e., there exists $a>0$ such that if $v(0)\in\mathcal{O}$ and the solution for this initial data of \eqref{evoleq} satisfies that $v(t)\in\mathcal{O}$ for all $t>0$, then exist a initial data $\tilde{v}(0)\in\mathcal{M}_0\cap\mathcal{O}$ such that, $\norm{v-\tilde{v}}_{\dot{H}\e{4}(\T)}\leq Ce\e{-at}$ as $t\to\infty$.
\end{itemize} 

\end{thm}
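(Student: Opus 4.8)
The plan is to verify that the linear operator $L$ of \eqref{defl} and the nonlinearity $R$ of \eqref{defr} satisfy Hypotheses \ref{hip1}, \ref{hip2} and \ref{hip3}, and then to invoke Theorems 2.9 and 3.22 of \cite{centermanif}, which yield assertions (i)--(iii) essentially verbatim. The only genuinely new content of the proof is the verification of these three hypotheses in the functional setting used here; the identification of $\mathcal{E}_0$, $\mathcal{E}_1$ and of the spectral projection $P_0$ with the subspaces in \eqref{e0e1} and with the orthogonal projection onto $V_0$ will drop out of the spectral analysis of $L$ (which is self-adjoint and diagonal in Fourier).

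First I would check Hypothesis \ref{hip1}. Part (i) is immediate from \eqref{defl}, since $v\mapsto\dfi{2}v+\dfi{4}v$ maps $\dhil{4}$ boundedly into $L\e{2}(\T)$. For part (ii), expanding the outer derivative in \eqref{defr} shows that $R(v)$ is a polynomial expression in $v,\dfi{}v,\dots,\dfi{4}v$ whose top-order term is of the form $(v\e{3}+3cv\e{2}+3c\e{2}v)\dfi{4}v$; using the embedding $H\e{4}(\T)\hookrightarrow W\e{3,\infty}(\T)$ together with the fact that $H\e{4}(\T)$ is a Banach algebra, one obtains that $R\colon\dhil{4}\to L\e{2}(\T)$ is well defined and, being polynomial, of class $\mathcal{C}\e{k}$ for every $k\geq 2$ on all of $\dhil{4}$. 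Every monomial in \eqref{defr} carries a factor that is at least quadratic in $v$, so $R(0)=0$ and $DR(0)=0$. Note that $R$ has the same differential order as $L$: this quasilinear feature is precisely why one must use the version of the theory from \cite{Mielke}, \cite{centermanif} rather than the classical semilinear one, and verifying that $R$ is a smooth map between the chosen spaces is the step requiring the most care.

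For Hypotheses \ref{hip2} and \ref{hip3} I would use that, on $\dhil{4}$, the operator $L$ is diagonal in the Fourier basis with $L(e\e{in\varphi})=-c\e{3}n\e{2}(n\e{2}-1)e\e{in\varphi}$ for $\abs{n}\geq 1$. The modes $n=\pm1$ give the eigenvalue $0$, while for $\abs{n}\geq 2$ one has $-c\e{3}n\e{2}(n\e{2}-1)\leq-12c\e{3}<0$ and these eigenvalues tend to $-\infty$. Hence $\sigma_0=\{0\}$ is an eigenvalue of multiplicity $2$ with eigenspace $\mathrm{Span}\{\cos\varphi,\sin\varphi\}=\mathcal{E}_0$, so \ref{hip2}(ii) holds, and $\sup\sigma_-=-12c\e{3}$, so \ref{hip2}(i) holds with any $\gamma\in(0,12c\e{3})$. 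For \ref{hip3}, the restriction $L_1$ is again diagonal in Fourier with eigenvalues $-c\e{3}n\e{2}(n\e{2}-1)$, $\abs{n}\geq 2$, so for $s\in\R$ the resolvent $(isI-L_1)\e{-1}$ is multiplication by $(is+c\e{3}n\e{2}(n\e{2}-1))\e{-1}$, whose modulus is at most $(s\e{2}+(12c\e{3})\e{2})\e{-1/2}\leq 1/\abs{s}$; thus \eqref{esthip3} holds with $C=1$ and any $s_0>0$. Since we work with Hilbert spaces, Remark 2.16 of \cite{centermanif} guarantees that this resolvent bound is the only property of $L_1$ that must be checked.

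With Hypotheses \ref{hip1}--\ref{hip3} established, Theorems 2.9 and 3.22 of \cite{centermanif} apply and produce the map $\psi\in\mathcal{C}\e{k}(\mathcal{E}_0,\mathcal{E}_1)$ with $\psi(0)=0$, $D\psi(0)=0$, the locally invariant and locally attracting manifold $\mathcal{M}_0$ of \eqref{manifold} containing all small bounded global solutions, together with the reduced equation \eqref{v0} and the invariance identity \eqref{psi}; the attractivity exponent $a$ in (iii) may be taken to be any number in $(0,12c\e{3})$, consistently with the choice of $\gamma$. In summary, the bulk of the work is the verification of Hypothesis \ref{hip1}(ii) --- smoothness of the quasilinear nonlinearity $R$ as a map $\dhil{4}\to L\e{2}(\T)$ --- while Hypotheses \ref{hip2} and \ref{hip3} reduce to an explicit Fourier computation and the conclusion is a direct appeal to the cited theorems.
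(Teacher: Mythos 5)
Your proposal is correct and follows essentially the same route as the paper: the theorem itself is obtained by appealing to Theorems 2.9 and 3.22 of \cite{centermanif} (with Remark 2.16 covering the Hilbert-space setting so that only \eqref{esthip3} must be checked for $L_1$), and your verification of Hypotheses \ref{hip1}--\ref{hip3} via the Fourier diagonalization of $L$, the resolvent bound for $L_1$, and Sobolev/Banach-algebra estimates for $R$ reproduces the content of the paper's Lemma \ref{lema1}. The only organisational difference is that, since the statement is conditional on the hypotheses, the paper keeps that verification in a separate lemma rather than inside the proof of the theorem.
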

\begin{nota}
Notice that the subspaces $\mathcal{E}_0$ and $\mathcal{E}_1$ have been defined in \eqref{e0e1} in a different way as in the statement of Theorem \ref{thmmani}. However, using Fourier analysis and  \eqref{defp0} it might be readily seen that both definitions are equivalent.
\end{nota}

In the following Lemma we collect several properties of the operators
 $L$ and $R$ in \eqref{defl} and \eqref{defr}. In particular they satisfy the Hypotheses \ref{hip1}, \ref{hip2} and \ref{hip3}.
\begin{lem}\label{lema1}
The operators $L$ and $R$ given by \eqref{defl} and \eqref{defr} are well defined from $\dot{H}\e{4}(\T)\to L\e{2}(\T)$ and $\dot{H}\e{4}(\T)\times\dot{H}\e{4}(\T)\to L\e{2}(\T)$ respectively. 

Moreover, the operator $L$ satisfies Hypotheses \ref{hip1}(i), \ref{hip2} and \ref{hip3} and the operator $R$ satisfies Hypothesis \ref{hip1}(ii) for any $k\geq 2$. 
\end{lem}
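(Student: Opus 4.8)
The plan is to verify each of the three Hypotheses directly from the explicit Fourier representation of $L$, since $L$ is diagonal in the basis $\{e^{in\varphi}\}$. First I would recall that for $v=\sum_{n}a_n e^{in\varphi}\in\dot H^4(\T)$ (so $a_0=0$) we have $L(v)=\sum_{n\neq 0}\lambda_n a_n e^{in\varphi}$ with $\lambda_n=-c^3(-n^2+n^4)=c^3(n^2-n^4)$. From this the well-definedness and boundedness $L\in\mathcal L(\dot H^4(\T),L^2(\T))$ (Hypothesis \ref{hip1}(i)) is immediate: $|\lambda_n|\le C n^4$, so $\norm{L(v)}_{L^2(\T)}\le C\norm{v}_{\dot H^4(\T)}$. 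For $R$, the well-definedness from $\dot H^4(\T)$ to $L^2(\T)$ follows from the embedding $\dot H^s(\T)\subset L^\infty(\T)$ for $s\ge 1$ (already noted after \eqref{defr}), which lets one control $v$, $\dfi{}v$, $\dfi{2}v$ in $L^\infty$ while the highest derivative $\dfi{3}v$ is taken in $L^2$; each monomial in \eqref{defr}, after distributing the outer $\dfi{}$, is a product of such factors, hence lies in $L^2(\T)$. Since $R$ is a polynomial nonlinearity in $v$ and its derivatives (up to order $4$ after the outer derivative), it is $C^\infty$ as a map $\dot H^4(\T)\to L^2(\T)$, so Hypothesis \ref{hip1}(ii) holds for every $k\ge 2$; finally $R(0)=0$ and $DR(0)=0$ because every term in \eqref{defr} is at least quadratic in $v$ (the apparently linear-looking piece $3c^2 v$ is multiplied by $(\dfi{}v+\dfi{3}v)$, hence quadratic).

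Next I would check Hypothesis \ref{hip2}. The eigenvalues are $\lambda_n=c^3(n^2-n^4)=-c^3 n^2(n^2-1)$ for $n\in\Z\setminus\{0\}$. Thus $\lambda_{\pm 1}=0$ and $\lambda_n<0$ for $|n|\ge 2$, with $\lambda_{\pm 2}=-12c^3$ and $\lambda_n\to-\infty$. Hence $\sigma_0=\{0\}$ consists of a single eigenvalue of multiplicity $2$ (eigenspace $\mathrm{Span}\{e^{i\varphi},e^{-i\varphi}\}$, i.e. $V_0$), giving \ref{hip2}(ii), and $\sup_{\lambda\in\sigma_-}\mathrm{Re}\,\lambda=\lambda_{\pm 2}=-12c^3<0$, so \ref{hip2}(i) holds with any $\gamma\in(0,12c^3)$. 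This also identifies $\mathcal E_0=V_0\cap\dot H^4(\T)$ and $\mathcal E_1=V_1\cap\dot H^4(\T)$ in agreement with \eqref{e0e1}, and shows $P_0$ from \eqref{defp0} coincides with the projection onto $V_0$.

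Finally, for Hypothesis \ref{hip3} I would estimate the resolvent of $L_1$, the restriction of $L$ to $P_1\dot H^4(\T)$, again using diagonalization: for $s\in\R$ and $|n|\ge 2$,
\begin{equation*}
\norm{(isI-L_1)^{-1}}_{\mathcal L(L^2(\T))}=\sup_{|n|\ge 2}\frac{1}{|is-\lambda_n|}=\sup_{|n|\ge 2}\frac{1}{\sqrt{s^2+c^6 n^4(n^2-1)^2}}.
\end{equation*}
Since $\lambda_n\le -12c^3<0$ for all $|n|\ge 2$, one has $|is-\lambda_n|\ge |s|$, hence $\norm{(isI-L_1)^{-1}}_{\mathcal L(L^2(\T))}\le \frac{1}{|s|}$ for all $s\ne 0$; in particular \eqref{esthip3} holds with $C=1$ and any $s_0>0$. (As noted before the statement, by Remark 2.16 in \cite{centermanif} this bound on the imaginary axis is the only property of $L_1$ that needs checking in the Hilbert-space setting.) Assembling these verifications completes the proof. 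The only mildly delicate point is the $C^\infty$/$C^k$ regularity and the vanishing of $R$ and $DR$ at $0$ in Hypothesis \ref{hip1}(ii): here one must be careful that the outer $\dfi{}$ in \eqref{defr} does not spoil the mapping into $L^2(\T)$, which is why the argument is organized so that at most three derivatives fall on any single factor and the product estimate uses $\dot H^s\subset L^\infty$ for $s\ge 1$; everything else is a routine consequence of the explicit spectrum of $L$.
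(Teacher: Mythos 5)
Your proposal follows essentially the same route as the paper: Fourier diagonalization of $L$ gives Hypothesis \ref{hip1}(i) and the spectrum $\{-c^3(n^4-n^2)\}$ with gap $-12c^3$ gives Hypothesis \ref{hip2}; your multiplier bound $\sup_{|n|\ge 2}|is-\lambda_n|^{-1}\le |s|^{-1}$ is the same computation the paper writes in integral form, $\norm{isv+c^3(\dfi{2}v+\dfi{4}v)}_{L^2(\T)}^2=s^2\norm{v}_{L^2(\T)}^2+c^6\norm{\dfi{2}v+\dfi{4}v}_{L^2(\T)}^2\ge s^2\norm{v}_{L^2(\T)}^2$; and for $R$ the appeal to the polynomial structure plus Sobolev embeddings is what the paper does, except that it writes out the four Fr\'echet derivatives explicitly and estimates each of them (using that $H^4(\T)$ is a Banach algebra), which also settles $R(0)=0$, $DR(0)=0$ as you note. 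One correction to your bookkeeping: after distributing the outer $\partial_\varphi$ in \eqref{defr} the term $(v^3+3c^2v+3cv^2)(\dfi{2}v+\dfi{4}v)$ appears, so four derivatives necessarily land on a single factor; your claims that the highest derivative occurring is $\dfi{3}v$ and that the argument can be organized so that ``at most three derivatives fall on any single factor'' are not attainable. This does not invalidate the step: for $v\in\dhil{4}$ the top-order factor $\dfi{4}v$ is placed in $L^2(\T)$ while $v,\dfi{}v,\dfi{2}v,\dfi{3}v$ are controlled in $L^\infty(\T)$ via $\dot{H}^1(\T)\subset L^\infty(\T)$, which is exactly how the paper bounds the corresponding terms (e.g.\ $I^1_2$, $I^1_3$, $I^1_5$) by $\norm{v}_{\dot{H}^4}$. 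With that adjustment your verification of Hypotheses \ref{hip1}--\ref{hip3} is complete and equivalent to the paper's proof.
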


\begin{proof}

It is readily seen that \eqref{defl} defines an operator $L$ from $\dot{H}\e{4}(\T)\to L\e{2}(\T)$ for any $c>0$ and $L\in\mathcal{L}(\dot{H}\e{4}(\T),L\e{2}(\T))$ whence Hypothesis \ref{hip1}(i) follows. 

In order to check Hypothesis \ref{hip2}, we use the Fourier representation of $L$. Given $v=\sum_{n=-\infty, n\neq 0}^{n=\infty}a_n e^{in\varphi}\in\dot{H}\e{4}(\T)$ and $Lv=\sum_{n=-\infty, n\neq 0}^{n=\infty}(Lv)_n e^{in\varphi}\in L\e{2}(\T)$ we have
\begin{equation}\label{fourierl}
(Lv)_n=-c^3(n^4-n^2)a_n\quad\textit{for}\quad n\in\Z\setminus\{0\}.
\end{equation}

It follows that $\sigma=\sigma(L)=\sigma_p(L)=\{\lambda=-c^3(n^4-n^2): n\in\N\}$. 
Therefore, $\sigma=\sigma_0\cup\sigma_+$ with 
\begin{displaymath}
\sigma_0=\{0\},\quad\sigma_-=\{\lambda=-c^3(n^4-n^2):n=2,3,...\} 
\end{displaymath}
whence Hypothesis \ref{hip2} follows.

In order to check Hypothesis \ref{hip3} we first remark that $P_0$ is the orthogonal projection in $L\e{2}(\T)$ in the subspace of eigenvectors associated to $\sigma_0$ (cf. \cite{kato} and \cite{reed_simon}).
We now remark that \eqref{esthip3} would follow from the estimate 
\begin{displaymath}
\norm{i s v-L_1(v)}_{L\e{2}(\T)}\geq\abs{ s}\norm{v}_{L\e{2}(\T)}\quad\textit{for any}\quad s\in\R, \abs{s}\geq s_0\quad\textit{and}\quad v\in\mathcal{E}_1.
\end{displaymath}
This estimate is a consequence of the following computation,
\begin{align*}
&\norm{i sv+c^3(\dfi{2}v+\dfi{4}v)}_{L^2}^2=\int_{\T}\abs{i sv+c^3(\dfi{2}v+\dfi{4}v)}^2d\varphi=\int_{\T} s^2v^2d\varphi+c^6\int_\T(\dfi{2}v+\dfi{4}v)^2d\varphi\geq s^2\norm{v}_{L\e{2}(\T)}^2,
\end{align*}
which holds for any $v\in\mathcal{E}_1$.

Concerning the operator defined in \eqref{defr}, we first notice that it is a well defined operator from $\dot{H}\e{4}(\T)\times\dot{H}\e{4}(\T)\to L\e{2}(\T)$, due to the embedding $\dot{H}\e{1}(\T)\subset L\e{\infty}(\T)$. It only remains to check Hypothesis \ref{hip1}(ii).

It is enough to see that 
\begin{equation}\label{rck}
\norm{R(v+\varepsilon)-\sum_{\ell=0}^{k}\frac{1}{\ell !}D^\ell R(v)(\varepsilon)}_{L\e{2}(\T)}\leq C\norm{\varepsilon}_{\dot{H}^4}\e{k+1}\quad\textit{for}\quad k\geq 2
\end{equation}
where for any $v\in\dot{H}\e{4}(\T)$, $D\e{\ell}R(v)\in\mathcal{L}((\dot{H}\e{4}(\T))\e{\ell},L\e{2}(\T))$.

Inequality \eqref{rck} is a consequence of the fact that 
\begin{displaymath}
R(v+\varepsilon)=\sum_{\ell=0}^{4}\frac{1}{\ell !}D^\ell R(v)(\varepsilon)
\end{displaymath}
where 
\begin{align}\label{derivadasder}
&DR(v)(\varepsilon)=-\dfi{}(\varepsilon v)-\dfi{}((3c\e{2}\varepsilon+6cv\varepsilon+3v\e{2}\varepsilon)(\dfi{}v+\dfi{3}v))-\dfi{}((v\e{3}+3c\e{2}v+3cv\e{2})(\dfi{}\varepsilon+\dfi{3}\varepsilon)),\\
&\frac{1}{2}D\e{2}R(v)(\varepsilon)=-\dfi{}(\frac{\varepsilon\e{2}}{2})-\dfi{}((3c\e{2}\varepsilon+6cv\varepsilon+3v\e{2}\varepsilon)(\dfi{}\varepsilon+\dfi{3}\varepsilon))-\dfi{}((3c\varepsilon\e{2}+3v\varepsilon\e{2})(\dfi{}v+\dfi{3}v)),\\
&\frac{1}{3!}D\e{3}R(v)(\varepsilon)=-\dfi{}((3c\varepsilon\e{2}+3v\varepsilon\e{2})(\dfi{}\varepsilon+\dfi{3}\varepsilon))-\dfi{}(\varepsilon\e{3}(\dfi{}v+\dfi{3}v)),\\
&\frac{1}{4!}D\e{4}R(v)(\varepsilon)=-\dfi{}(\varepsilon\e{3}(\dfi{}\varepsilon+\dfi{3}\varepsilon)).
\end{align}

Using classical Sobolev embeddings, in particular the fact that the Sobolev spaces $\dhil{4}$ are Banach algebras (cf. \cite{adams_fournier}) we obtain that $D\e{\ell}R(v)(\cdot)\in\mathcal{L}((\dot{H}\e{4}(\T))\e{\ell},L\e{2}(\T))$, $\ell=0,1,2,...$ whence \eqref{rck} holds for all $k\geq 2$.

We suppose that $v\in\dot{H}^4(\T)$ then we have the following estimates:
\begin{align*}
&\norm{DR(v)(\varepsilon)}_{L\e{2}(\T)}\leq\norm{\dfi{}(\varepsilon v)}_{L^2}+c^2\norm{\dfi{}(\varepsilon(\dfi{}v+\dfi{3}v))}_{L^2}\\
&+2c\norm{\dfi{}(v\varepsilon(\dfi{}v+\dfi{3}v))}_{L^2}+3\norm{\dfi{}(v^2\varepsilon(\dfi{}v+\dfi{3}v))}_{L^2}\\
&+\norm{\dfi{}((v^3+c^2v+cv^2)(\dfi{}\varepsilon+\dfi{3}\varepsilon))}_{L^2}\equiv\sum_{i=1}^5 I_i^1
\end{align*}
Using Holder inequality we can obtain,
\begin{align*}
&I^1_1\leq\norm{\dfi{}\varepsilon v}_{L^2}+\norm{\varepsilon\dfi{}v}_{L^2}\leq\norm{v}_{L^{\infty}}\norm{\varepsilon}_{\dot{H}^1}+\norm{\dfi{}v}_{L^{\infty}}\norm{\varepsilon}_{L\e{2}}\leq C\norm{\varepsilon}_{\dot{H}^1};
\end{align*}
\begin{align*}
&I^1_2\leq c^2(\norm{\dfi{}\varepsilon(\dfi{}v+\dfi{3}v)}_{L^2}+\norm{\varepsilon(\dfi{2}v+\dfi{4}v)}_{L^2}\leq c^2((\norm{\dfi{}v}_{L^{\infty}}+\norm{\dfi{3}v}_{L^{\infty}})\norm{\varepsilon}_{\dot{H}^1}\\
&+\norm{\dfi{2}v}_{L^{\infty}}\norm{\varepsilon}_{L\e{2}}+\norm{v}_{\dot{H}^4}\norm{\varepsilon}_{L^{\infty}})\leq C\norm{\varepsilon}_{\dot{H}^1};
\end{align*}
\begin{displaymath}
I^1_3\leq 2c(\norm{\dfi{}(\varepsilon v\dfi{}v)}_{L^2}+\norm{\dfi{}(\varepsilon v\dfi{3}v)}_{L^2})
\end{displaymath}
where
\begin{align*}
&\norm{\dfi{}(\varepsilon v\dfi{}v)}_{L^2}\leq \norm{\dfi{}\varepsilon v\dfi{}v}_{L^2}+\norm{\varepsilon\dfi{}v\dfi{}v}_{L^2}+\norm{\varepsilon v\dfi{2}v}_{L^2}\\
&\leq C(\norm{v}_{L\e{\infty}}\norm{\dfi{}v}_{L^{\infty}}\norm{\varepsilon}_{\dot{H}^1}+\norm{\dfi{}v}_{L^{\infty}}^2\norm{\varepsilon}_{L\e{2}}+\norm{v}_{L\e{\infty}}\norm{\dfi{2}v}^2_{L^{\infty}}\norm{\varepsilon}_{L^2})\leq C\norm{\varepsilon}_{\dot{H}^1}
\end{align*}
and
\begin{align*}
&\norm{\dfi{}(\varepsilon v\dfi{3}v)}_{L^2}=\norm{(\dfi{}\varepsilon v+\varepsilon\dfi{}v)\dfi{3}v+\varepsilon v\dfi{4}v}_{L^2}\\
&\leq\norm{v}_{L^{\infty}}\norm{\dfi{3}v}_{L^{\infty}}\norm{\varepsilon}_{\dot{H}^1}+\norm{\dfi{}v}_{L^{\infty}}\norm{\dfi{3}v}_{L^{\infty}}\norm{\varepsilon}_{L\e{2}}+\norm{v}_{L\e{\infty}}\norm{v}_{\dot{H}\e{4}}\norm{\varepsilon}_{L^\infty}\leq C\norm{\varepsilon}_{\dot{H}^1}
\end{align*}
Following the same procedure with the terms $I^1_4$ and $I^1_5$ we get $\norm{DR(v)(\varepsilon)}_{L\e{2}}\leq C\norm{\varepsilon}_{\dot{H}^4}$.

For $D\e{2}R(v)(\varepsilon)$ we proceed in a similar way:
\begin{align*}
&\norm{D\e{2}R(v)(\varepsilon)}_{L\e{2}}\leq \norm{\dfi{}(\frac{\varepsilon\e{2}}{2})}_{L\e{2}}+\norm{\dfi{}((3c\e{2}\varepsilon+6cv\varepsilon+3v\e{2}\varepsilon)(\dfi{}\varepsilon+\dfi{3}\varepsilon))}_{L\e{2}}+\norm{\dfi{}((3c\varepsilon\e{2}+3v\varepsilon\e{2})(\dfi{}v+\dfi{3}v))}_{L\e{2}}\\
&=\norm{\varepsilon\dfi{}\varepsilon}_{L\e{2}}+\norm{\dfi{}(3c\e{2}\varepsilon+6cv\varepsilon+3v\e{2}\varepsilon)}_{L\e{2}}\norm{(\dfi{}\varepsilon+\dfi{3}\varepsilon)}_{L\e{\infty}}+\norm{(3c\e{2}\varepsilon+6cv\varepsilon+3v\e{2}\varepsilon)}_{L\e{\infty}}\norm{(\dfi{2}\varepsilon+\dfi{4}\varepsilon)}_{L\e{2}}\\
&+\norm{\dfi{}(3c\varepsilon\e{2}+3v\varepsilon\e{2})}_{L\e{2}}\norm{(\dfi{}v+\dfi{3}v)}_{L\e{\infty}}+\norm{(3c\varepsilon\e{2}+3v\varepsilon\e{2})}_{L\e{\infty}}\norm{(\dfi{2}v+\dfi{4}v)}_{L\e{2}}\leq C\norm{\varepsilon}_{\dot{H}\e{4}}\e{2}.
\end{align*}
In the same way we can compute that 
\begin{align*}
&\norm{D\e{3}R(v)(\varepsilon)}_{L\e{2}}\leq C\norm{\varepsilon}_{\dot{H}\e{4}}\e{3},\\
&\norm{D\e{4}R(v)(\varepsilon)}_{L\e{2}}\leq C\norm{\varepsilon}_{\dot{H}\e{4}}\e{4}
\end{align*}
thus we conclude the proof.
\end{proof}
Now, Lemma \ref{lema1} shows that the assumptions in Theorem \ref{thmmani} hold. In particular this implies the existence of a Center Manifold whose properties are summarised in the following result. 
\begin{lem}\label{lema2}
Suppose that $L$ and $R$ are as in Lemma \ref{lema1}. Then the operator $P_0$ defined in \eqref{defp0} is the orthogonal projection of $L\e{2}(\T)$ in $\mathcal{E}_0=Span\{cos(\varphi),\sin(\varphi)\}$. There exist a manifold $\mathcal{M}_0$ with the properties stated in Theorem \ref{thmmani} that can be parametrized as in \eqref{manifold} with $\psi\in\mathcal{C}\e{k}(\mathcal{E}_0,\mathcal{E}_1)$ for any $k\geq 2$. Moreover, we have $\psi(0)=0$, $D\psi(0)=0$ and if $v_0=a_{-1}e\e{-i\varphi}+a_1e\e{i\varphi}\in\mathcal{E}_0$ then
\begin{equation}
D^2\psi(0)(v_0,v_0)=\frac{i}{12c^3}a_{-1}^2 e^{-2i\varphi}-\frac{i}{12c^3}a_{1}^2 e^{2i\varphi}.
\end{equation}
\end{lem}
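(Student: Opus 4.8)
The plan is to apply Theorem \ref{thmmani}, whose hypotheses are established in Lemma \ref{lema1}, and then to read off the quadratic Taylor coefficient of $\psi$ at the origin from the invariance identity \eqref{psi}. By Lemma \ref{lema1} the operators $L$ and $R$ satisfy Hypotheses \ref{hip1}, \ref{hip2} and \ref{hip3}, so Theorem \ref{thmmani} yields a map $\psi\in\mathcal{C}\e{k}(\mathcal{E}_0,\mathcal{E}_1)$ for every $k\geq 2$ with $\psi(0)=0$ and $D\psi(0)=0$, together with the manifold $\mathcal{M}_0$ of \eqref{manifold} and all its properties. To identify $P_0$ I would use the Fourier multiplier \eqref{fourierl}: the eigenvalue $-c\e{3}(n\e{4}-n\e{2})$ vanishes exactly for $n\in\{0,\pm1\}$, and the mode $n=0$ is absent from $\dhil{4}$, so the spectral projection \eqref{defp0} associated with $\sigma_0=\{0\}$ is precisely the orthogonal projection of $L\e{2}(\T)$ onto $\mathrm{Span}\{\cos\varphi,\sin\varphi\}=\mathcal{E}_0$ (this is the identification already used in the proof of Lemma \ref{lema1}, cf. \cite{kato}, \cite{reed_simon}).

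For the second order term I would differentiate \eqref{psi} twice along a ray $t\mapsto tv_0$ with $v_0\in\mathcal{E}_0$ and evaluate at $t=0$. Writing the left-hand side as $D\psi(v_0)\big(G(v_0)\big)$ with $G(v_0)=L_0(v_0)+P_0R(v_0+\psi(v_0))$, one has $L_0\equiv0$ because $\mathcal{E}_0=\ker L$, and combining this with $R(0)=0$, $DR(0)=0$, $\psi(0)=0$ and $D\psi(0)=0$ gives $G(0)=0$ and $DG(0)=0$; hence the three terms $D\e{3}\psi(0)(v_0,v_0,G(0))$, $2D\e{2}\psi(0)(v_0,DG(0)v_0)$ and $D\psi(0)\big(D\e{2}G(0)(v_0,v_0)\big)$ that make up the second derivative of the left-hand side all vanish. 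Differentiating the right-hand side $L_1\psi(v_0)+P_1R(v_0+\psi(v_0))$ twice and using once more $\psi(0)=0$, $D\psi(0)=0$, $DR(0)=0$ leaves, for every $v_0\in\mathcal{E}_0$,
\begin{equation*}
L_1\big(D\e{2}\psi(0)(v_0,v_0)\big)+P_1\big(D\e{2}R(0)(v_0,v_0)\big)=0 .
\end{equation*}

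From the explicit expressions \eqref{derivadasder} one has $\tfrac12 D\e{2}R(0)(v_0,v_0)=-\dfi{}\big(\tfrac{v_0\e{2}}{2}\big)-3c\e{2}\dfi{}\big(v_0(\dfi{}v_0+\dfi{3}v_0)\big)$, and since every $v_0\in\mathcal{E}_0$ satisfies $\dfi{}v_0+\dfi{3}v_0=0$ the second summand disappears, so $D\e{2}R(0)(v_0,v_0)=-\dfi{}(v_0\e{2})$. Therefore $L_1\big(D\e{2}\psi(0)(v_0,v_0)\big)=\dfi{}P_1(v_0\e{2})=2\,\dfi{}P_1\big(\tfrac{v_0\e{2}}{2}\big)=2\,L\big(Q(v_0)\big)$ by \eqref{ldeq}, and since $Q(v_0)\in\mathcal{E}_1$ and $L_1$ is invertible on $\mathcal{E}_1$ (its Fourier multiplier is nonzero for $\abs{n}\geq2$) we obtain $D\e{2}\psi(0)(v_0,v_0)=2Q(v_0)$; inserting $v_0=a_{-1}e\e{-i\varphi}+a_1e\e{i\varphi}$ together with the definition \eqref{defq} of $Q$ gives exactly the stated formula.

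The only genuinely delicate point is the bookkeeping in the double differentiation of the implicit relation \eqref{psi}: one must verify that the whole left-hand side collapses — which hinges on $L_0\equiv0$, i.e. on $\mathcal{E}_0$ being exactly $\ker L$, and on $DR(0)=0$ — and that the surviving quadratic term simplifies to $-\dfi{}(v_0\e{2})$ because $\dfi{}v_0+\dfi{3}v_0=0$ on $\mathcal{E}_0$. Once this is done, the remaining step is just the inversion of the Fourier-diagonal operator $L_1$, which is the same computation already recorded in \eqref{ldeq}.
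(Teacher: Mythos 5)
Your overall route is the same as the paper's: Lemma \ref{lema1} gives the hypotheses of Theorem \ref{thmmani}, $P_0$ is identified through the Fourier multiplier \eqref{fourierl}, and $D\e{2}\psi(0)$ is extracted from the invariance identity \eqref{psi}; your double differentiation along a ray is just a repackaging of the paper's Taylor expansion (and needs $k\geq 3$ for the $D\e{3}\psi(0)$ term, which is harmless since $k$ is arbitrary). The reduction to $L_1\big(D\e{2}\psi(0)(v_0,v_0)\big)+P_1\big(D\e{2}R(0)(v_0,v_0)\big)=0$ and the simplification $D\e{2}R(0)(v_0,v_0)=-\dfi{}(v_0\e{2})$ on $\mathcal{E}_0$ are correct — indeed more careful than the paper, whose proof quotes $-\dfi{}(\tfrac{v\e{2}}{2})$ for $D\e{2}R(0)(v,v)$, which by \eqref{derivadasder} is only $\tfrac12 D\e{2}R(0)(v,v)$.

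The gap is in the last step. You evaluate $L_1\e{-1}$ by chaining \eqref{ldeq} with \eqref{defq}, but those two displays are mutually inconsistent by a factor $2$: with $Q$ as in \eqref{defq}, the multiplier of $L$ on the modes $e\e{\pm 2i\varphi}$ is $-12c\e{3}$ and $12c\e{3}\cdot\tfrac{1}{24c\e{3}}=\tfrac12$, so $L(Q(v_0))=\tfrac12\,\dfi{}P_1(\tfrac{v_0\e{2}}{2})$, not $\dfi{}P_1(\tfrac{v_0\e{2}}{2})$. Hence the equalities $\dfi{}P_1(v_0\e{2})=2L(Q(v_0))$ and $Q$ given by \eqref{defq} cannot both be used. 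If instead you invert $L_1$ directly on your own (correct) relation $L_1\big(D\e{2}\psi(0)(v_0,v_0)\big)=\dfi{}P_1(v_0\e{2})=-2ia_{-1}\e{2}e\e{-2i\varphi}+2ia_1\e{2}e\e{2i\varphi}$, you obtain the coefficient $\tfrac{i}{6c\e{3}}$, i.e. twice the constant in the statement; the printed $\tfrac{i}{12c\e{3}}$ then corresponds to $\tfrac12 D\e{2}\psi(0)(v_0,v_0)$, the quadratic part of $\psi(v_0)$ itself, which is what the proof of Theorem \ref{thestab} actually uses (with the \eqref{defq} normalization). The paper reaches the printed constant only because of its substitution of $\tfrac12 D\e{2}R(0)$ for $D\e{2}R(0)$; your cleaner treatment of $D\e{2}R$ exposes this factor-of-two inconsistency (also visible in \eqref{defq} versus \eqref{ldeq}), but closing the computation by invoking \eqref{ldeq} together with \eqref{defq} conceals the clash rather than resolving it, so as written the final identity is not validly established.
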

\begin{proof}

We have seen in the proof of Lemma \ref{lema1} that $P_0$ is the orthogonal projection of $L\e{2}(\T)$ into the kernel of $L$ which due to \eqref{fourierl} is given by $Span\{cos(\varphi),\sin(\varphi)\}=\mathcal{E}_0$ (cf. Theorem \ref{thmmani}).

The existence of the Center Manifold $\mathcal{M}_0$, is then a consequence of Theorem \ref{thmmani} with the form \eqref{manifold}, where $\psi\in\mathcal{C}\e{k}(\mathcal{E}_0,\mathcal{E}_1)$ satisfies \eqref{psi}.

In the rest of the proof of this Lemma, we will write $\norm{\cdot}=\norm{\cdot}_{\dot{H}\e{4}(\T)}$ in order to simplify the notation. We now use \eqref{psi} to compute $D^2\psi(0)$.
Taking into account that $\psi(0)=D\psi(0)=0$, we obtain using Taylor series at $0$ for $\psi$ and $D\psi$,
\begin{equation}\label{psiv0}
\psi(v_0)=\frac{1}{2}D^2\psi(0)(v_0,v_0)+\mathcal{O}(\norm{v_0}^3)\quad\textit{for any}\quad v_0\in\mathcal{E}_0
\end{equation}
and
\begin{displaymath}
D\psi(v_0)(h)=D^2\psi(0)(v_0,h)+\mathcal{O}(\norm{v_0}^2\norm{h})\quad\textit{for any}\quad h\in\mathcal{E}_0.
\end{displaymath}
Using Taylor series for $R$, as well as $R(0)=DR(0)=0$ we have $R(v_0)=\frac{1}{2}D^2R(0)(v_0,v_0)+\mathcal{O}(\norm{v_0}^3)$. Combining this with \eqref{psiv0}, we obtain 

\begin{displaymath}
R(v_0+\psi(v_0))=\frac{1}{2}D^2R(0)(v_0,v_0)+\mathcal{O}(\norm{v_0}^3),
\end{displaymath} 

Since $L_0(v_0)=0$, equation (\ref{psi}) can be written as:
\begin{displaymath}
L_1(D^2\psi(0)(v_0,v_0))+ P_1(D^2R(0)(v_0,v_0))+\mathcal{O}(\norm{v_0}^3)=0.
\end{displaymath}

Taking $v_0=\varepsilon\xi_0$ with $\xi_0\in\mathcal{E}_0$, dividing by $\varepsilon\e{2}$ and taking the limit $\varepsilon\to 0$ we arrive at
\begin{equation}\label{77taylor}
L_1(D^2\psi(0)(\xi_0,\xi_0))+ P_1(D^2R(0)(\xi_0,\xi_0)))=0.
\end{equation}
 We recall that $D^2R(0)(v,v)=-\dfi{}(\frac{v^2}{2})-3c^2\dfi{}(v(\dfi{}v+\dfi{3}v))$ (cf. \eqref{derivadasder}), that for $v=\xi_0\in\mathcal{E}_0$ reduces to $D^2R(0)(\xi_0,\xi_0)=-\dfi{}(\frac{\xi_0^2}{2})$. Therefore \eqref{77taylor} becomes
\begin{displaymath}
L_1(D^2\psi(0)(\xi_0,\xi_0))=P_1(\dfi{}(\frac{\xi_0^2}{2}))
\end{displaymath}
whence,
\begin{displaymath}
D^2\psi(0)(\xi_0,\xi_0)=L_1\e{-1}P_1(\dfi{}(\frac{\xi_0^2}{2}))
\end{displaymath}
using the fact that $L_1$ is invertible in $P_1L\e{2}(\T)$. Taking $\xi_0=a_{-1}e^{-i\varphi}+a_{1}e^{i\varphi}$ with $a_1=\overline{a_{-1}}$, then
\begin{displaymath}
P_1(\dfi{}(\frac{\xi_0^2}{2}))=i(a_1^2 e^{2i\varphi}-a_{-1}^2 e^{-2i\varphi}).
\end{displaymath}
Finally, we invert $L_1$ using \eqref{fourierl} and we obtain,

\begin{equation}\label{f}
D^2\psi(0)(v_0,v_0)=\frac{i}{12c^3}a_{-1}^2 e^{-2i\varphi}-\frac{i}{12c^3}a_{1}^2 e^{2i\varphi}.
\end{equation}
\end{proof}

\begin{thm}\label{thestab}
Let $c>0$. There exists  $\varepsilon>0$ and a manifold $\mathcal{M}_0$ as in \eqref{manifold} (both of them depending on $c$) such that all the properties stated in Theorem \ref{thmmani} hold with $\mathcal{O}=B_{\varepsilon}(0)$. In particular, if $v(\cdot,0)\in\mathcal{M}_0\cap\mathcal{O}$ the corresponding function $h$ which solves \eqref{equevol} (cf. \eqref{defvfuncionh}) satisfies:
\begin{equation}\label{cotasolucmanifold}
\norm{h(\cdot+ct,t)-c-\frac{2K}{\sqrt{t}}\cos(\cdot+\tilde{K}\log{t}+C_0)}_{H\e{4}(\T)}\leq\frac{C}{t}\quad\textit{for all}\quad t\geq 1
\end{equation}

where 
\begin{equation}\label{defk}
K=\sqrt{12c\e{3}}, \tilde{K}= 9c\e{2},
\end{equation}
 $C_0=C_0(h_0)$ and $C$ depends only on $c$. Moreover, if $v(\cdot,0)\in\mathcal{O}$ we have that 
 \begin{equation}\label{distancia}
 dist_{\dhil{4}}(v(\cdot,t),\mathcal{M}_0)\leq Ce\e{-at}\quad\textit{for any}\quad t>0 
 \end{equation}
 with $a=a(c)$.
\end{thm}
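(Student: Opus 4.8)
The plan is to combine the center manifold reduction of Theorem~\ref{thmmani} --- whose hypotheses hold by Lemma~\ref{lema1} --- with an explicit asymptotic analysis of the resulting two dimensional reduced ODE. First I would fix $c>0$ and choose $\varepsilon>0$ small enough that Theorem~\ref{thglobal} applies with $\mathcal{O}=B_\varepsilon(0)$ and that the global solution it produces remains inside the neighbourhood on which Theorem~\ref{thmmani} and Lemma~\ref{lema2} are valid; then, by the local invariance property~(i) of Theorem~\ref{thmmani}, if $v(\cdot,0)\in\mathcal{M}_0\cap\mathcal{O}$ we have $v(\cdot,t)=v_0(\cdot,t)+\psi(v_0(\cdot,t))$ with $v_0(\cdot,t)\in\mathcal{E}_0$ for all $t\ge0$, and by property~(ii) the component $v_0$ solves the reduced equation \eqref{v0}, which here reads $\frac{dv_0}{dt}=P_0R(v_0+\psi(v_0))$ because $L_0=0$ on $\mathcal{E}_0$.

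Writing $v_0(\varphi,t)=a_{-1}(t)e^{-i\varphi}+a_1(t)e^{i\varphi}$ with $a_{-1}=\overline{a_1}$, I would reduce this to a scalar ODE for $a_1$. Using $\psi(v_0)=Q(v_0)+\mathcal{O}(\norm{v_0}_{\dhil{4}}^3)$ (which follows from $\psi(0)=D\psi(0)=0$ and Lemma~\ref{lema2} together with \eqref{defq}), the cancellation $\dfi{}v_0+\dfi{3}v_0=0$ valid on $\mathcal{E}_0$ --- which annihilates every term in $R$ built only from $v_0$ except $-\dfi{}(v_0^2/2)\in V_1$, killed by $P_0$ --- and the identities for $P_0(\dfi{}(v_0Q(v_0)))$ and $P_0(3c^2\dfi{}(v_0(\dfi{}Q(v_0)+\dfi{3}Q(v_0))))$ already computed in the proof of Theorem~\ref{thglobal}, one arrives at
\[
a_1'(t)+\alpha\abs{a_1(t)}^2a_1(t)=\mathcal{O}(\abs{a_1(t)}^4),\qquad\alpha=\frac{1}{24c^3}-\frac{3i}{4c}.
\]
In polar form $a_1=\rho e^{i\gamma}$ this decouples into $\rho'=-\frac{1}{24c^3}\rho^3+\mathcal{O}(\rho^4)$ and $\gamma'=\frac{3}{4c}\rho^2+\mathcal{O}(\rho^3)$.

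Next I would integrate these equations asymptotically. Since $\rho(0)$ is small and the leading coefficient is negative, a comparison argument gives the a priori bound $\rho(t)\le Ct^{-1/2}$. Setting $u=\rho^{-2}$ yields $u'=\frac{1}{12c^3}+\mathcal{O}(\rho)=\frac{1}{12c^3}+\mathcal{O}(t^{-1/2})$, hence $u(t)=\frac{t}{12c^3}+\mathcal{O}(t^{1/2})$, so $\rho(t)^2=\frac{12c^3}{t}+\mathcal{O}(t^{-3/2})$ and therefore $\rho(t)=\frac{K}{\sqrt t}+\mathcal{O}(t^{-1})$ with $K=\sqrt{12c^3}$. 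Feeding this into the $\gamma$ equation gives $\gamma'(t)=\frac{9c^2}{t}+\mathcal{O}(t^{-3/2})$; since $t^{-3/2}$ is integrable at infinity, the limit $C_0:=\lim_{t\to\infty}(\gamma(t)-\tilde K\log t)$ exists (where $\tilde K=9c^2$ and $C_0=C_0(h_0)$), and $\abs{\gamma(t)-\tilde K\log t-C_0}\le Ct^{-1/2}$.

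Finally I would reconstruct $h$. Since $v_0(\varphi,t)=2\rho(t)\cos(\varphi+\gamma(t))$ and all Sobolev norms are equivalent on the two dimensional space $\mathcal{E}_0$,
\[
\norm{v_0(\cdot,t)-\tfrac{2K}{\sqrt t}\cos(\cdot+\tilde K\log t+C_0)}_{H^4(\T)}\le C\Big(\big|\rho(t)-\tfrac{K}{\sqrt t}\big|+\rho(t)\big|\gamma(t)-\tilde K\log t-C_0\big|\Big)\le\frac{C}{t},
\]
while $\norm{\psi(v_0(\cdot,t))}_{H^4(\T)}\le C\norm{v_0(\cdot,t)}_{\dhil{4}}^2\le C/t$; combined with $h(\cdot+ct,t)-c=v(\cdot,t)=v_0(\cdot,t)+\psi(v_0(\cdot,t))$ (cf. \eqref{defvfuncionh}) and the triangle inequality, this gives \eqref{cotasolucmanifold}. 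The estimate \eqref{distancia} is then immediate from the local attractivity property~(iii) of Theorem~\ref{thmmani}, using that Theorem~\ref{thglobal} keeps $v(\cdot,t)$ in $\mathcal{O}$ for all $t>0$. The main obstacle is the bookkeeping in the asymptotic integration: one must track the remainders precisely enough to obtain the $\mathcal{O}(t^{-1})$ rate for $\rho$ and the $\mathcal{O}(t^{-1/2})$ rate for the phase --- which combine into the $\mathcal{O}(t^{-1})$ error above --- and one must verify that the remainder in the reduced dynamics is genuinely $\mathcal{O}(\abs{a_1}^4)$, which relies both on the cancellations on $\mathcal{E}_0$ and on the $\dhil{4}$ bounds for $\psi-Q$.
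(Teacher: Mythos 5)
Your proposal follows essentially the same route as the paper's proof: apply Theorem~\ref{thmmani} via Lemma~\ref{lema1}, reduce to the two-dimensional ODE on $\mathcal{E}_0$ using Lemma~\ref{lema2} and the identities already computed in the proof of Theorem~\ref{thglobal}, integrate the polar-coordinate system $\rho' = -\tfrac{1}{24c^3}\rho^3 + \mathcal{O}(\rho^4)$, $\gamma' = \tfrac{3}{4c}\rho^2 + \mathcal{O}(\rho^3)$, and reconstruct $h$ via $v = v_0 + \psi(v_0)$ together with Theorem~\ref{thmmani}(iii) for \eqref{distancia}. Your version fills in the ``standard ODE arguments'' step with the substitution $u=\rho^{-2}$ and is actually more careful with signs than the paper's intermediate display (which writes $\gamma'\rho = \im(\alpha)\rho^3$ rather than $-\im(\alpha)\rho^3$, with a compensating sign slip in $\tilde K$), arriving at the same correct values $K=\sqrt{12c^3}$, $\tilde K = 9c^2$.
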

\begin{nota}
Not all the solutions of \eqref{equevol} with initial data close to $c$ have the asymptotic behaviour given in \eqref{cotasolucmanifold}. Indeed, there exists a manifold of solutions such that $h(\cdot+ct,t)-c$ decay exponentially as $t\to\infty$. However, such a behaviour would take place for non-generic initial data.
\end{nota}
\begin{proof}
The proof of Theorem \ref{thestab} is just an application of the results in Theorem \ref{thmmani}. The hypotheses of Theorem \ref{thmmani} are satisfied due to Lemma \ref{lema1}, therefore the manifold $\mathcal{M}_0$ exists. The differential equation that describes the dynamic of $v$ on this manifold is \eqref{v0} which reduces to
\begin{equation}\label{s5e1}
\frac{dv_0}{dt}=P_0R(v_0+\psi(v_0)),
\end{equation}
using that $L_0(v_0)=0$ for $v_0\in\mathcal{E}_0$.

Let $v_0=a_{-1}e^{-i\varphi}+a_{1}e^{i\varphi}\in\mathcal{E}_0$ with $a_1=\overline{a_{-1}}$. Using Lemma \ref{lema2} as well as $P_0\Big(\dfi{}\Big(\frac{v_0^2}{2}\Big)\Big)=0$.
Therefore, equation \eqref{s5e1} becomes
\begin{displaymath}
a'_1(t)e^{i\varphi}+a'_{-1}(t)e^{-i\varphi}=\Big(-\frac{1}{24c^3}-\frac{3i}{4c}\Big)a_1 a_{-1}^2 e^{-i\varphi}+\Big(-\frac{1}{24c^3}+\frac{3i}{4c}\Big)a_{-1}a_1^2 e^{i\varphi}+\mathcal{O}(\norm{v_0}^4).
\end{displaymath}
Henceforth, 
\begin{displaymath}
a'_1(t)=-\alpha\abs{a_1}^2 a_{1}+\mathcal{O}(\norm{v_0}^4)
\end{displaymath}
where $\alpha=\Big(\frac{1}{24c^3}-\frac{3i}{4c}\Big)$.
Using polar coordinates $a_1(t)=\rho(t)e^{i\gamma(t)}$  we have
\begin{displaymath}
\left\{ \begin{array}{ll}
\rho'(t)=-Re(\alpha)\rho^3(t)+\mathcal{O}(\rho\e{4})\\
\gamma'(t)\rho(t)=Im(\alpha)\rho^3(t)+\mathcal{O}(\rho\e{4})
\end{array} \right.
\end{displaymath}
whence, standard ODE arguments yield: 
\begin{displaymath}
\begin{cases}
\rho(t)=\frac{K}{\sqrt{t}}(1+\mathcal{O}(\frac{1}{\sqrt{t}}))\\
\gamma(t)=\tilde{K}\log t+C_0+\mathcal{O}(\frac{1}{\sqrt{t}})
\end{cases}
\end{displaymath}
as $t\to\infty$, where $K=\frac{1}{\sqrt{2Re(\alpha)}}=\sqrt{12c\e{3}}$, $\tilde{K}=\frac{Im(\alpha)}{2Re(\alpha)}=9c\e{2}$ and $C_0=C_0(\gamma(0),\rho(0))$.

Then,
\begin{displaymath}
a_{1}(t)=\frac{K}{\sqrt{t}}e^{i(\tilde{K}\log t+C_0)}(1+\mathcal{O}(\frac{1}{\sqrt{t}}))\quad\textit{as}\quad t\to\infty
\end{displaymath}

Using that on the manifold $\mathcal{M}_0$ we have $v=v_0+\psi(v_0)$ as well as \eqref{f} we obtain,

\begin{align*}
v(\varphi,t)=\frac{2K}{\sqrt{t}}\cos(\varphi+\tilde{K}\log{t}+C_0)+\mathcal{O}(\frac{1}{t})\quad\textit{as}\quad t\to\infty.
\end{align*}

Thus, \eqref{cotasolucmanifold} follows.

Finally, the estimate \eqref{distancia} is a consequence of the global existence result in Theorem \ref{thglobal} and Theorem \ref{thmmani}(iii). 
\end{proof}

\begin{nota}
The asymptotic behaviour in \eqref{cotasolucmanifold} can be reformulated in terms of the original non-dimensional variables (cf. \eqref{rescale}) as:

\begin{equation}\label{defvariablesoriginales}
h(\theta,t)=\lambda c+\frac{2K\lambda}{\sqrt{\varepsilon A\lambda t}}\cos(-\theta+(1-\varepsilon cA\lambda)t+\tilde{K}\log{(\varepsilon A\lambda t)}+C_0)+\mathcal{O}(\frac{1}{t})
\end{equation}
where $\lambda^2=\frac{6\eta\e{2}}{Re(\eta\e{2}-1)b}$, $A=\frac{2\eta\e{2}}{\mu(\eta\e{2}-1)}$ and $t\gg\frac{1}{A\varepsilon\lambda}$. 

Moreover, we recall that in the non-dimensional variables introduced in Section \ref{S3}, the interface separating Fluid 1 and Fluid 2 is given by the curve $r=1+\varepsilon h$. Therefore, an elementary geometrical argument shows that the interface associated to the solutions with asymptotic \eqref{defvariablesoriginales} behaves asymptotically as the circle given by 
\begin{displaymath}
(x-\delta(t)\cos(\theta_0(t)))\e{2}+(y-\delta(t)\sin(\theta_0(t)))\e{2}=r_0\e{2}
\end{displaymath}
where $\delta(t)=\frac{2K\varepsilon\lambda}{\sqrt{\varepsilon A\lambda t}}$, $\theta_0(t)=(1-\varepsilon cA\lambda)t+\tilde{K}\log{(\varepsilon A\lambda t)}+C_0$ and $r_0=1+\varepsilon\lambda c$. Notice that the center of this circle spirals in towards the origin as $t\to\infty$ (cf. Figure \ref{espiral}). The distance between the interface and this circle is of order $\mathcal{O}(\delta(t)\e{2})$ as $t\gg\frac{1}{A\varepsilon\lambda}$. 

If the thin fluid is near the external cylinder (cf. Remark \ref{fluidoexterno}), we obtain the same asymptotic formula \eqref{defvariablesoriginales} with $A=\frac{\mu\eta(1+\eta\e{2})+\mu(\eta\e{2}-1)}{\eta\e{2}(\eta\e{2}-1)}$ and $\lambda\e{2}=\frac{3\mu A}{b Re}$.
\begin{figure}[htb]\label{espiral}
\includegraphics[width=120mm]{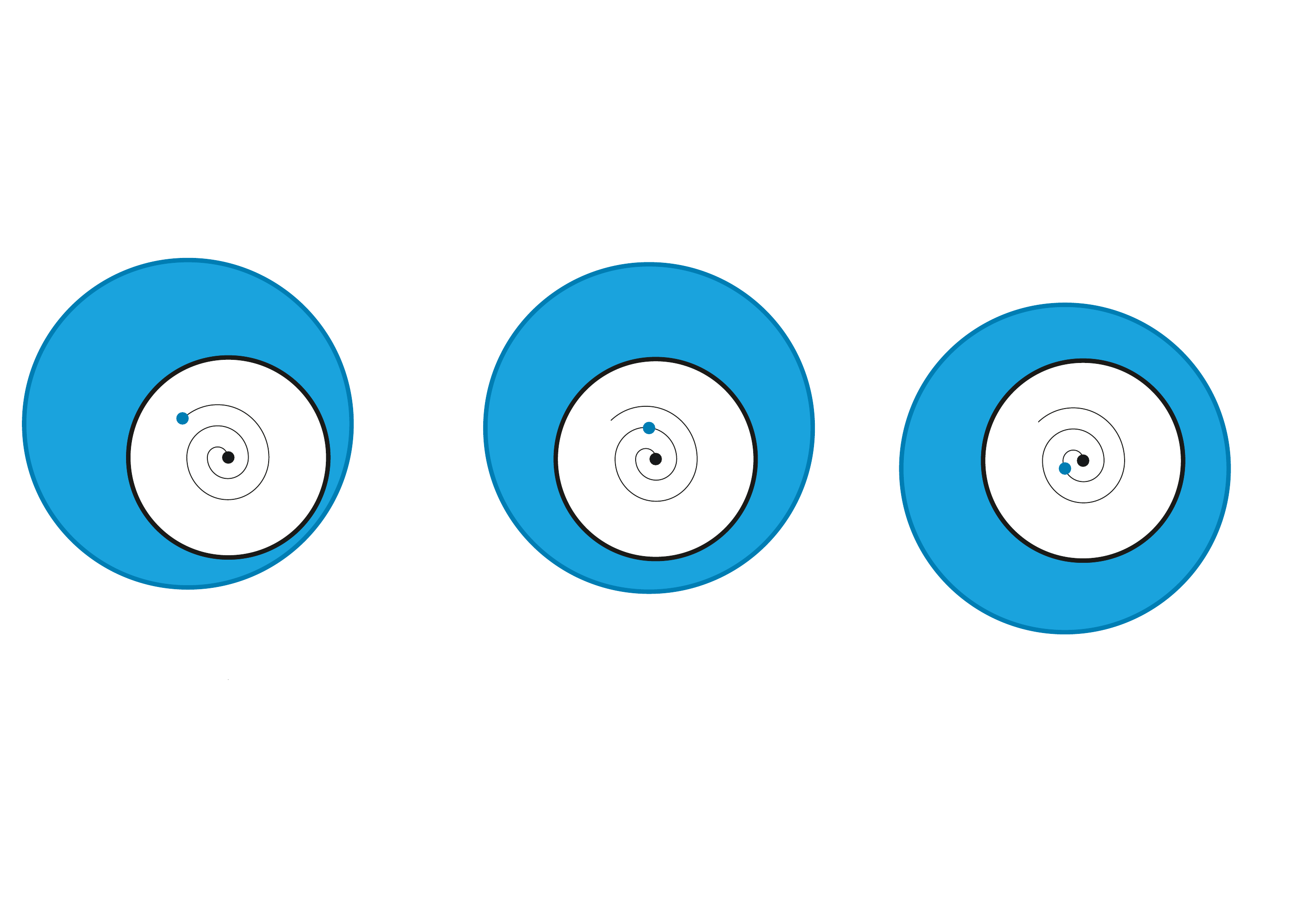}\caption{Center of the interface spiralling towards the center of the cylinders}
\end{figure}
\end{nota}

\subsection{Case $\gamma\gg\frac{1}{\varepsilon\e{2}}$}\label{S5ss2}

We now consider the stability of the solutions of  the equation \eqref{eqevtsgrande}. We have proved in Proposition \ref{prop3} that for any $c>0$ the set of positive stationary solutions $h$ of \eqref{eqevtsgrande} such that $\frac{1}{2\pi}\int_{\T}h=c$, has the following form:
\begin{displaymath}
\mathcal{M}_0(c)=\{h\in\dhil{4}:h(\cdot)=c+c_2\sin(\cdot-\theta_0), c, c_2, \theta_0\in\R, \abs{c_2}<c\}
\end{displaymath}

We reformulate \eqref{eqevtsgrande} using the change of variables $h=c+v(\theta,t)$. Then $v$ satisfies:
\begin{equation}\label{sistmanifgrande}
\frac{dv}{dt}=L(v)+\bar{R}(v)
\end{equation}
where
\begin{equation}\label{operadoresgrande}
\begin{cases}
L(v)=-c\e{3}\Big(\de{2}v+\de{4}v\Big),\\
\bar{R}(v)=-\de{}\Big((v\e{3}+3c\e{2}v+3cv\e{2})(\de{}v+\de{3}v)\Big)
\end{cases}
\end{equation}

\begin{thm}\label{thglobalgrande}
Let $c>0$. There exists $\varepsilon>0$ (depending on $c$) such that, for any $h_0\in H\e{4}(\T)$ satisfying $\norm{h_0-c}_{H\e{4}(\T)}<\varepsilon$ with $\frac{1}{2\pi}\int_{\T}h_0=c$, there exists a unique solution $h\in C([0,\infty); H\e{4}(\T))\cap C\e{1}((0,\infty);H\e{4}(\T))$ of \eqref{eqevtsgrande}, where $h(\cdot,0)=h_0(\cdot)$.

Moreover, we have
\begin{equation}\label{cotathglobalgrande}
dist_{H\e{4}(\T)}(h(\cdot,t),\mathcal{M}_0(c))\leq  Ce\e{-at}\quad\textit{for all}\quad t\geq 0
\end{equation}
where $a$, $C$ are positive constants depend only on $c$.
\end{thm}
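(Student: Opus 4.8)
The plan is to follow the scheme of Theorems~\ref{thglobal} and \ref{thestab}, while exploiting a structural simplification special to \eqref{eqevtsgrande}: the absence of the term $\de{}(h^2/2)$. Starting from \eqref{sistmanifgrande}--\eqref{operadoresgrande} with $v=h-c$ (note that the mean of $h$ is conserved, so $v$ stays in $\dhil{4}$), I would first record the elementary algebraic identity that for any $v_0\in\mathcal{E}_0$ one has $\de{}v_0+\de{3}v_0=0$ and $L(v_0)=0$. Consequently $\bar{R}(v_0)=0$ for every $v_0\in\mathcal{E}_0$, so $\mathcal{E}_0$ consists entirely of equilibria of \eqref{sistmanifgrande}, and after translating by $c$ these are precisely the solutions forming $\mathcal{M}_0(c)$ (consistently with Proposition~\ref{prop} and Proposition~\ref{prop3}(b)). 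Moreover, writing $v=v_0+v_1$ with $v_0=P_0v\in\mathcal{E}_0$ and $v_1=P_1v\in\mathcal{E}_1$, the factor $\de{}v+\de{3}v$ occurring in $\bar{R}(v)$ equals $\de{}v_1+\de{3}v_1$; hence \emph{every} term of $\bar{R}(v_0+v_1)$ is at least quadratic and carries a factor $\de{}v_1+\de{3}v_1$. This plays the role that $Q(v_0)$ played in Lemma~\ref{F0F1}, and here it costs nothing because there is no resonant quadratic term to remove.

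Next I would prove the a priori estimate underlying global existence. From the local solution (the obvious adaptation of Proposition~\ref{wellposedness}) and parabolic smoothing exactly as in Lemma~\ref{regulF1}, one works at the $\dhil{4}$ level. Projecting \eqref{sistmanifgrande} onto $\mathcal{E}_1$ gives $\tfrac{d}{dt}v_1=L_1v_1+P_1\bar{R}(v_0+v_1)$; differentiating four times in $\theta$, multiplying by $\de{4}v_1$ and integrating, the linear part is coercive since $v_1\in\mathcal{E}_1$ has Fourier modes $|n|\ge2$, namely $c^{3}\int_{\T}\big((\de{6}v_1)^{2}-(\de{5}v_1)^{2}\big)\,d\theta\ge C_0\norm{v_1}_{\dhil{6}}^{2}$ with $C_0=C_0(c)>0$. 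Because every term of $P_1\bar{R}(v_0+v_1)$ carries a factor $\de{}v_1+\de{3}v_1$, and the $\mathcal{E}_0$-component $v_0$ has all Sobolev norms mutually equivalent, an integration-by-parts bookkeeping identical in spirit to Lemma~\ref{regulF1} gives $\int_{\T}\de{4}\big(P_1\bar{R}(v_0+v_1)\big)\,\de{4}v_1\,d\theta\le C\norm{v}_{\dhil{4}}\norm{v_1}_{\dhil{6}}^{2}$. Absorbing this into the coercive term for $\norm{v}_{\dhil{4}}$ small yields $\tfrac{d}{dt}\norm{v_1}_{\dhil{4}}^{2}\le-\tfrac{C_0}{4}\norm{v_1}_{\dhil{4}}^{2}$, hence $\norm{v_1(\cdot,t)}_{\dhil{4}}\le\norm{v_1(\cdot,0)}_{\dhil{4}}e^{-C_0t/8}$; note there is no $v_0$-driven forcing at all. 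Projecting onto $\mathcal{E}_0$ gives $\tfrac{d}{dt}v_0=P_0\bar{R}(v_0+v_1)$, and since $\norm{P_0\bar{R}(v_0+v_1)}_{\dhil{4}}\le C\norm{v}_{\dhil{4}}\norm{v_1}_{\dhil{4}}$ this produces $\tfrac{d}{dt}\norm{v_0}_{\dhil{4}}\le C\varepsilon\,\norm{v_1(\cdot,0)}_{\dhil{4}}e^{-C_0t/8}$; integrating, $\norm{v_0(\cdot,t)}_{\dhil{4}}\le\varepsilon+C'\varepsilon^{2}\le2\varepsilon$ for $\varepsilon$ small. This closes a continuation/bootstrap argument, giving the unique global solution in $C([0,\infty);H^{4}(\T))\cap C^{1}((0,\infty);H^{4}(\T))$.

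To get \eqref{cotathglobalgrande}, observe that $\int_{t}^{\infty}\norm{v_1(\cdot,0)}_{\dhil{4}}e^{-C_0s/8}\,ds\to0$, so $v_0(\cdot,t)$ is Cauchy in $\dhil{4}$; hence $v_0(\cdot,t)\to v_0^{\infty}\in\mathcal{E}_0$ with $\norm{v_0(\cdot,t)-v_0^{\infty}}_{\dhil{4}}\le C\varepsilon e^{-C_0t/8}$, and together with the decay of $v_1$ this gives $\norm{v(\cdot,t)-v_0^{\infty}}_{\dhil{4}}\le Ce^{-at}$ for some $a=a(c)>0$. Since $c+v_0^{\infty}\in\mathcal{M}_0(c)$ and $h=c+v$, the bound \eqref{cotathglobalgrande} follows. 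Alternatively, one may invoke Theorem~\ref{thmmani}: Hypotheses~\ref{hip1}--\ref{hip3} hold exactly as in Lemma~\ref{lema1}, since $L$ is unchanged and $\bar{R}$ obeys the same estimates as $R$; the center manifold $\mathcal{M}_0$ must contain the bounded global solutions $c_2\sin(\cdot-\theta_0)$, which already fill a neighbourhood of $0$ in $\mathcal{E}_0$, so by uniqueness of the decomposition $\mathcal{E}_0\oplus\mathcal{E}_1$ one gets $\psi\equiv0$ near $0$, i.e. $\mathcal{M}_0$ coincides locally with $\mathcal{E}_0$; applying Theorem~\ref{thmmani}(iii) to the global solution above — whose limit is an equilibrium lying on $\mathcal{M}_0$ — then yields \eqref{cotathglobalgrande} directly. (The decreasing energy $E[h]=\tfrac12\int_{\T}\big((\de{}h)^{2}-h^{2}\big)\,d\theta$, with $\tfrac{d}{dt}E=-\int_{\T}h^{3}(\de{}h+\de{3}h)^{2}\,d\theta\le0$ near $h\equiv c>0$, provides a further qualitative confirmation that the dynamics collapses onto $\mathcal{M}_0(c)$.)

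The step I expect to be the main obstacle is the nonlinear energy estimate for $v_1$: one must carry out the integrations by parts precisely enough to see that every contribution of $P_1\bar{R}(v_0+v_1)$ to $\tfrac{d}{dt}\norm{v_1}_{\dhil{4}}^{2}$ is genuinely of the form $(\text{small})\times\norm{v_1}_{\dhil{6}}^{2}$ — in particular that there is no purely $v_0$-dependent source term and no non-absorbable top-order term. This is exactly where the identity $\de{}v_0+\de{3}v_0=0$ does the essential work, and it is what makes the convergence here exponential rather than merely algebraic as in Theorem~\ref{thestab}.
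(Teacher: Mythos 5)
Your proposal is correct and follows essentially the same route as the paper: decompose $v=h-c$ into $v_0\in\mathcal{E}_0$ and $v_1\in\mathcal{E}_1$, obtain exponential decay of $v_1$ from the coercive $\dhil{4}$ energy estimate (absorbing the nonlinearity, which carries the factor $\partial_\theta v_1+\partial_\theta^3 v_1$ since $\partial_\theta v_0+\partial_\theta^3 v_0=0$), then integrate the $P_0$-equation to get a bootstrap for global existence and exponential convergence of $v_0$ to a limit $v_\infty\in\mathcal{E}_0$ with $c+v_\infty\in\mathcal{M}_0(c)$. The explicit remark that $\mathcal{E}_0$ consists of equilibria, and the optional center-manifold and Lyapunov-functional asides, are consistent supplements to what the paper does implicitly.
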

\begin{proof}
Local well posedness of \eqref{sistmanifgrande} in the space $C([0,T); H\e{4}(\T))\cap C\e{1}((0,T);H\e{4}(\T))$ will be seen in Appendix \ref{appendix}. 

We decompose $v=v_0+v_1$ with $v_0\in\mathcal{E}_0$ and $v_1\in\mathcal{E}_1$ (cf. \eqref{e0e1}). 
Applying the operator $P_1$ to \eqref{sistmanifgrande} and using that $L$ and $P_1$ commute, we obtain
\begin{displaymath}
\frac{dv_1}{dt}-L(v_1)=P_1\bar{R}(v)
\end{displaymath}

Using the smoothing effect of the equation \eqref{sistmanifgrande} as in the proof of Lemma \ref{regulF1} we obtain that $v\in C\e{\infty}((0,T];\dhil{\ell})$ for $\ell\geq 1$. We can then compute $\frac{d}{dt}\norm{v_1}_{\dhil{4}}\e{2}$ as
\begin{align*}
&\frac{d}{dt}\norm{v_1}\e{2}_{\dhil{4}}+c\e{3}\int_{\T}(\de{6}v_1+\de{8}v_1)\de{4}v_1 d\theta=-\int_{\T}\de{4}P_1\bar{R}(v)\de{4}v_1 d\theta
\end{align*}
Using integration by parts,
\begin{align*}
&-c\e{3}\int_{\T}(\de{6}v_1+\de{8}v_1)\de{4}v_1 d\theta=-c\e{3}\int_{\T}(\de{6}v_1)\e{2}+(\de{5}v_1)\e{2} d\theta\geq -C_0\norm{v_1}\e{2}_{\dhil{6}},\quad\textit{with}\quad C_0>0,\\
&-\int_{\T}\de{4}P_1\bar{R}(v)\de{4}v_1 d\theta=-\int_{\T}P_1(\de{3}((v\e{3}+3c\e{2}v+3cv\e{2})(\de{}v_1+\de{3}v_1)))\de{6}v_1 d\theta\leq C_1\norm{v}_{\dhil{4}}\norm{v_1}\e{2}_{\dhil{6}}.
\end{align*}
Therefore, as long as $\norm{v}_{\dhil{4}}\leq\varepsilon$ with $\varepsilon$ small enough, we arrive at
\begin{displaymath}
\frac{d}{dt}\norm{v_1}\e{2}_{\dhil{4}}\leq -\frac{C_0}{2}\norm{v_1}_{\dhil{4}}\e{2}
\end{displaymath}
as long as $\sup_{s\in[0,t]}\norm{v(\cdot,s)}_{\dhil{4}}\leq\varepsilon$. Thus,
\begin{equation}\label{cotav1grande}
\norm{v_1}_{\dhil{4}}\leq C\norm{v_{in}}_{\dhil{4}}e\e{-\frac{C_0}{4}t}
\end{equation}

Now, applying $P_0$ to \eqref{sistmanifgrande} we obtain
\begin{equation}\label{v0grande}
\frac{dv_0}{dt}=P_0(\bar{R}(v))
\end{equation}
Using \eqref{operadoresgrande} and \eqref{cotav1grande}, we deduce that, as long as $\norm{v}_{\dhil{4}}\leq\varepsilon$, we have that
\begin{align}\label{normap0rgrande}
\norm{P_0(\bar{R}(v))}_{L\e{2}(\T)}\leq C\norm{v}_{\dhil{4}}\norm{v_1}_{\dhil{4}}\leq C\varepsilon\e{2}e\e{-\frac{C_0}{4}t}
\end{align}
whence $\norm{v_0(\cdot,t)}_{\dhil{4}}\leq C(\norm{v_{in}}_{\dhil{4}}+\varepsilon\e{2})$ if $\sup_{s\in[0,t]}\norm{v(\cdot,s)}_{\dhil{4}}\leq\varepsilon$.

Combining this estimate with \eqref{cotav1grande} we readily obtain that $v$ is globally defined in time and $\norm{v(\cdot,t)}_{\dhil{4}}\leq\varepsilon$ for any $t\geq 0$.

Using then \eqref{v0grande} and \eqref{normap0rgrande} it follows that there exists the limit $\lim_{t\to\infty}v_0(\cdot,t)=v_\infty$ in $\dhil{4}$ with $v_\infty\in\mathcal{E}_0$. Moreover, we have $\norm{v_0-v_\infty}_{\dhil{4}}\leq C\varepsilon\e{2}e\e{-\frac{C_0}{4}t}$. Since $\int_{\T}v_\infty=0$  we deduce, using the definition of $\mathcal{E}_0$, that $c+v_\infty\in\mathcal{M}_0(c)$. Thus, using also \eqref{cotav1grande} we obtain $\norm{v-v_\infty}_{\dhil{4}}\leq C\varepsilon e\e{-\frac{C_0}{4}t}$, whence the result follows.

\end{proof}

\begin{nota}
Notice that the solutions in $\mathcal{M}_0(c)$ can be interpreted geometrically for small $\varepsilon$ as circular interfaces with a center shifted slightly from the origin.
\end{nota}

\begin{nota}As we have seen in section \ref{S4}, the equation \eqref{eqevtsgrande} which arises when $\gamma\gg\frac{1}{\varepsilon\e{2}}$ has many more steady states than \eqref{eqevtsgrande1} that appears for $\gamma\approx\frac{1}{\varepsilon\e{2}}$. The only difference between \eqref{eqevtsgrande1} and \eqref{eqevtsgrande} is the presence in the former of the term $\frac{1}{\gamma\varepsilon\e{2}}\de{}{(\frac{h\e{2}}{2})}$ which a lower order term for $\gamma\gg\frac{1}{\varepsilon\e{2}}$. Nevertheless, this term might be expected to drive the circular steady states of \eqref{eqevtsgrande} centred outside the origin to the circular steady states of \eqref{eqevtsgrande} with center at the origin in time scales with $t\gg 1$. However, we will not pursue the analysis of this dynamics in this paper.
\end{nota}
\appendix
\section{Existence of solutions to \eqref{evoleq}-\eqref{defr} and \eqref{sistmanifgrande}-\eqref{operadoresgrande}}\label{appendix}

In this chapter we prove the existence of solutions to \eqref{evoleq}-\eqref{defr} and \eqref{sistmanifgrande}-\eqref{operadoresgrande} locally in time in $\dot{H}^4(\T)$ using energy estimates.

\begin{prop}\label{wellposedness}
(i) For any $c>0$ there exist $\delta_0>0$, $T\in(0,1)$ such that for any $v_{in}\in\dhil{4}$ satisfying $\norm{v_{in}}_{\dhil{4}}\leq\delta_0$, there exist $v\in C([0,T];\dhil{4})\cap C\e{1}((0,T];\dhil{4})$ which solves \eqref{evoleq}-\eqref{defr} with the initial data $v(\cdot,0)=v_{in}$. Moreover, there exists $C$ depending on $c$ such that 
\begin{equation}\label{cotadatoinicial}
\sup_{0\leq t\leq T}\norm{v(\cdot,t)}_{\dhil{4}}\leq C\norm{v_{in}}_{\dhil{4}}.
\end{equation}
The solution $v(\cdot,t)$ can be extended in time as long as $\sup_{0\leq s\leq t}\norm{v}_{L\e{\infty}(\T)}< c$ (equivalently, $\inf_{0\leq s\leq t}(\inf_{\T}h(\cdot,t))>0$) (cf. \eqref{defvfuncionh}).

(ii) For any $c>0$ there exist $\delta_0>0$, $T\in(0,1)$ such that for any $v_{in}\in\dhil{4}$ satisfying $\norm{v_{in}}_{\dhil{4}}\leq\delta_0$, there exist $v\in C([0,T];\dhil{4})\cap C\e{1}((0,T];\dhil{4})$ which solves \eqref{sistmanifgrande}-\eqref{operadoresgrande} with the initial data $v(\cdot,0)=v_{in}$. Moreover, there exists $C$ depending on $c$ such that 
\begin{equation}\label{cotadatoinicial}
\sup_{0\leq t\leq T}\norm{v(\cdot,t)}_{\dhil{4}}\leq C\norm{v_{in}}_{\dhil{4}}
\end{equation}
\end{prop}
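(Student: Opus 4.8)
The plan is to treat both \eqref{evoleq}--\eqref{defr} and \eqref{sistmanifgrande}--\eqref{operadoresgrande} as quasilinear fourth order parabolic equations on $\T$ and to prove local existence by the energy method, exactly as announced in the header of this Appendix. The key structural observation is that, collecting the highest order contributions,
\begin{equation*}
L(v)+R(v)=-\dfi{}\big((c+v)^3(\dfi{}v+\dfi{3}v)\big)-\dfi{}\Big(\tfrac{v^2}{2}\Big),\qquad L(v)+\bar R(v)=-\dfi{}\big((c+v)^3(\dfi{}v+\dfi{3}v)\big),
\end{equation*}
so that the principal part of both equations is $-(c+v)^3\dfi{4}v$ together with terms of order at most three, and the equation is uniformly parabolic precisely as long as $h=c+v>0$, i.e.\ as long as $\norm{v}_{L^\infty(\T)}<c$. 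Since $\dhil{1}\subset L^\infty(\T)$, this holds whenever $\norm{v}_{\dhil{4}}$ is small, which is where the threshold $\delta_0$ comes from. I will carry out the argument for (i); for (ii) the only difference is the absence of the harmless convection term $\dfi{}(v^2/2)$, so the same argument applies (and is slightly simpler).

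First I would regularise by Fourier truncation: project the equation onto $\operatorname{span}\{e^{in\varphi}:0<|n|\le N\}$, obtaining a finite dimensional ODE system with locally Lipschitz right hand side and hence a short-time solution $v_N$. The heart of the proof is then an energy estimate uniform in $N$. Differentiating the truncated equation four times, testing with $\dfi{4}v_N$ and integrating by parts, the principal term $-\dfi{}((c+v_N)^3\dfi{3}v_N)$ produces, after repeated integration by parts, the dissipative contribution $-\int_{\T}(c+v_N)^3(\dfi{6}v_N)^2\,d\varphi\le -\kappa\norm{v_N}_{\dhil{6}}^2$ with $\kappa=\kappa(c)>0$ as soon as $\norm{v_N}_{L^\infty}<c/2$, while every remaining term carries a factor which is a low derivative of $v_N$ (controlled by $\norm{v_N}_{\dhil{4}}$ and by $c$ through the coefficient $(c+v_N)^3$) times higher order derivatives; Gagliardo--Nirenberg interpolation between $\dhil{4}$ and $\dhil{6}$ together with Young's inequality absorb these into $\tfrac{\kappa}{2}\norm{v_N}_{\dhil{6}}^2$ plus a term of the form $\Phi(\norm{v_N}_{\dhil{4}}^2)$ with $\Phi$ locally Lipschitz, and, using $R(0)=0$ and $DR(0)=0$, with $\Phi(y)\le C(c)\,y$ for $y\le\delta_0^2$. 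One then gets $\frac{d}{dt}\norm{v_N(\cdot,t)}_{\dhil{4}}^2\le C(c)\norm{v_N(\cdot,t)}_{\dhil{4}}^2$ as long as $\norm{v_N(\cdot,t)}_{\dhil{4}}\le\delta_0$, so for $\norm{v_{in}}_{\dhil{4}}$ small and $T=T(c)\in(0,1)$ small one has $\sup_{[0,T]}\norm{v_N(\cdot,t)}_{\dhil{4}}\le C\norm{v_{in}}_{\dhil{4}}$, the smallness $\norm{v_N}_{L^\infty}<c/2$ is preserved on $[0,T]$, and $T$ is independent of $N$.

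With these bounds in hand I would pass to the limit $N\to\infty$: the equation gives a bound for $\partial_t v_N$ in $L^2(0,T;\dhil{-2})$, so by Aubin--Lions a subsequence converges strongly enough in intermediate norms to pass to the limit in the nonlinearity, producing a solution $v\in L^\infty(0,T;\dhil{4})\cap L^2(0,T;\dhil{6})$ of \eqref{evoleq}. Parabolic regularity for the linearisation around the fixed, $\dhil{4}$-valued (hence $C^3$-in-space) coefficient $(c+v)^3$ upgrades $v$ to $C^\infty((0,T]\times\T)$, whence $v\in C^1((0,T];\dhil{4})$; continuity up to $t=0$ in $\dhil{4}$ follows from weak continuity (from $v\in L^\infty(0,T;\dhil{4})$ and $\partial_t v\in L^1$) together with $\limsup_{t\to0^+}\norm{v(\cdot,t)}_{\dhil{4}}\le\norm{v_{in}}_{\dhil{4}}$ supplied by the energy inequality, and weak lower semicontinuity then forces strong continuity. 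Uniqueness: the difference $w$ of two solutions solves a linear parabolic equation, and a Gronwall estimate for $\norm{w}_{L^2(\T)}^2$ (the top order commutators being controlled by $\norm{w}_{\dhil{2}}$ and absorbed into the dissipation) gives $w\equiv0$; estimate \eqref{cotadatoinicial} is exactly what was proved above. The continuation criterion follows because the a priori estimate, and the parabolic smoothing estimates it is combined with, degenerate only through the ellipticity constant $(c+\inf v)^3$ and through $\norm{v}_{L^\infty}$ appearing in the coefficients, so that as long as $\inf_{[0,t]\times\T}h>0$ the $\dhil{4}$-norm stays finite and the local solution can be restarted.

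I expect the main obstacle to be the energy estimate itself: differentiating the quasilinear principal term $-\dfi{}((c+v)^3\dfi{3}v)$ four times generates commutator terms that a priori involve $\dfi{7}v$, one order above the available dissipation $\norm{v}_{\dhil{6}}^2$, so one has to integrate by parts once more and use the precise interpolation inequalities, exploiting that those terms always carry a low-order derivative of $v$, in order to close the inequality; the secondary, milder point is the continuity of $t\mapsto v(\cdot,t)$ in $\dhil{4}$ at $t=0$, which needs the weak-continuity-plus-norm-convergence argument rather than following directly from the a priori bounds.
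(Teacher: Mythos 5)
Your proposal is correct and takes essentially the same route as the paper's proof: a regularized problem, a uniform a priori estimate obtained by differentiating four times and testing with $\dfi{4}v$ so that the principal term yields dissipation in $\dhil{6}$ (valid while $h=c+v$ stays positive and $\norm{v}_{\dhil{4}}$ is small), a compactness passage to the limit, an $L\e{2}$ Gronwall argument for uniqueness, and continuation as long as $\inf_\T h>0$. The only inessential difference is the choice of regularization, Fourier/Galerkin truncation in your argument versus mollification by the operators $\J$ in the paper, which leads to the same uniform estimates.
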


The solution $v(\cdot,t)$ can be extended in time as long as $\sup_{0\leq s\leq t}\norm{v}_{L\e{\infty}(\T)}< c$ (equivalently, $\inf_{0\leq s\leq t}(\inf_{\T}h(\cdot,t))>0$) (cf. \eqref{defvfuncionh}).
\begin{proof}
The proof of Proposition \ref{wellposedness} is standard and we only sketch the main ideas on the arguments. We consider the case (i) of the Proposition.
In order to prove existence of solutions we first obtain uniform estimates for the solutions $v\e{\varepsilon}$ of the following regularized problem:
\begin{equation}\label{sistemaregularizado}
\frac{dv\e{\varepsilon}}{dt}=\J L(\J v\e{\varepsilon})+\J R(\J v\e{\varepsilon})
\end{equation}
where $\J$ is a mollifier operator that it is defined as $\J v=\phi_\varepsilon*v$ where $\varepsilon>0$, $\phi_\varepsilon(\cdot)=\frac{1}{\varepsilon}\phi(\frac{\cdot}{\varepsilon})$ with $\phi\geq 0$, $\phi\in C\e{\infty}(\R)$ and $supp(\phi)\subset [-1,1]$. The function $v\in C(\T)$ is considered as a periodic function in $\R$. The operators $L$ and $R$ are as in \eqref{defl} and \eqref{defr}.

The initial value problem for \eqref{sistemaregularizado} with initial data $v\e{\varepsilon}(0)=\J v_{in}\in\dhil{4}$ can be solved for any $\varepsilon>0$ by means of a standard fixed argument. The corresponding solution is in $C\e{\infty}([0,\infty)\times\T)$. We now derive uniform estimates in suitable Sobolev spaces for the solution of \eqref{sistemaregularizado}. To this end, we differentiate four times with respect to the variable $\varphi$, multiply by $\dfi{4}v\e{\varepsilon}$ and integrate by parts to obtain,
 
\begin{align*}
&\frac{d}{dt}\norm{v\e{\varepsilon}}^2_{\dot{H}^4(\T)}\leq c\e{3}\norm{\J v\e{\varepsilon}}_{\dot{H}\e{5}(\T)}\e{2}-c\e{3}\norm{\J v\e{\varepsilon}}_{\dot{H}\e{6}(\T)}\e{2}+J
\end{align*} 
where $J=\int_{\T}\dfi{4}\J R(\J v\e{\varepsilon})\dfi{4}v\e{\varepsilon}d\varphi$.
 
Using the properties of the mollifiers and the fact that $\norm{\J f}_{L\e{2}(\T)}\leq\norm{f}_{L\e{2}(\T)}$ it can be readily seen that
\begin{displaymath}
\abs{J}\leq C(\norm{v\e{\varepsilon}}\e{3}_{\dhil{4}}+\norm{v\e{\varepsilon}}\e{5}_{\dhil{4}}+(\norm{v\e{\varepsilon}}\e{2}_{\dhil{4}}+\norm{v\e{\varepsilon}}\e{4}_{\dhil{4}})\norm{\J v\e{\varepsilon}}_{\dhil{6}}+(\norm{v\e{\varepsilon}}_{\dhil{4}}+\norm{v\e{\varepsilon}}\e{3}_{\dhil{4}})\norm{\J v\e{\varepsilon}}_{\dhil{6}}\e{2})
\end{displaymath}
where $C$ is independent of $\varepsilon$.

Using Young's inequality it follows by means of standard computations that
\begin{displaymath}
\frac{d}{dt}\norm{v\e{\varepsilon}}\e{2}_{\dot{H}\e{4}(\T)}\leq \frac{d}{dt}\norm{v\e{\varepsilon}}\e{2}_{\dot{H}\e{4}(\T)}+\frac{c\e{3}}{2}\norm{\J v\e{\varepsilon}}\e{2}_{\dot{H}\e{6}(\T)}\leq C(\norm{v\e{\varepsilon}}\e{4}_{\dot{H}\e{4}(\T)}+\norm{v\e{\varepsilon}}\e{8}_{\dot{H}\e{4}(\T)})
\end{displaymath}
with $C$ independent of $\varepsilon$. Then a classical Gronwall like argument yields 

\begin{displaymath}
\norm{v\e{\varepsilon}(\cdot,t)}_{\dhil{4}}\leq C\norm{v_{in}}_{\dhil{4}}
\end{displaymath}
for $0\leq t\leq T$ with $T>0$ small enough and $C$ independent of $\varepsilon$. Using a compactness argument and taking the limit $\varepsilon\to 0$ we obtain the existence of solution $v$ of \eqref{evoleq}-\eqref{defr} as stated in the Proposition. The solutions can be extended for later times using similar arguments as long as $h$ remains positive.

For the proof of uniqueness it is more convenient to use the formulation of the problem in terms of the function $h$ defined in \eqref{defvfuncionh}, namely \eqref{equevol}.

Using the fact that $h\e{3}\dfi{3}h=\dfi{}(h\e{3}\dfi{2}h)-3h\e{2}\dfi{}h\dfi{2}h$ we have 
\begin{equation}\label{unicidadecuacion}
\frac{dh}{dt}=-\dfi{}(\frac{h\e{2}}{2})-\dfi{}(h\e{3}\dfi{}h)-\dfi{2}(h\e{3}\dfi{2}h)+3\dfi{}(h\e{2}\dfi{}h\dfi{2}h)
\end{equation}
Suppose that we have two solutions of \eqref{unicidadecuacion}, $h_1, h_2\in\dhil{4}$. By assumption $h_1\geq c_1>0$ and $h_2\geq c_1>0$. Then:
\begin{displaymath}
\frac{d}{dt}(h_1-h_2)=-\dfi{2}(h_1\e{3}\dfi{2}(h_1-h_2))-\dfi{2}((h_1\e{3}-h_2\e{3})\dfi{}h_2)+l.o.t.
\end{displaymath} 
The lower order terms ($l.o.t.$) contains terms having less that four derivatives. Multiplying by $(h_1-h_2)$ and integrating by parts, we have
\begin{align*}
\frac{1}{2}\frac{d}{dt}\norm{h_1-h_2}\e{2}_{L\e{2}(\T)}\leq -\int_{\T}h_1\e{3}(\dfi{2}(h_1-h_2))\e{2}d\varphi+l.o.t.\leq -c_1\e{3}\norm{h_1-h_2}_{\dhil{2}}\e{2}+l.o.t.
\end{align*}

The terms $l.o.t.$ can be estimated by terms of $\int_{\T}(h_1-h_2)\e{2}d\varphi$ or $\int_{\T}(\dfi{}(h_1-h_2))\e{2}d\varphi$, that using interpolation we can estimate as
\begin{displaymath}
\int_{\T}(\dfi{}(h_1-h_2))\e{2}d\varphi\leq \epsilon\norm{h_1-h_2}_{\dhil{2}}\e{2}+C_\epsilon\norm{h_1-h_2}_{L\e{2}(\T)}\e{2}.
\end{displaymath}

Therefore, 
\begin{displaymath}
\frac{d}{dt}\norm{h_1-h_2}\e{2}_{L\e{2}(\T)}\leq C\norm{h_1-h_2}\e{2}_{L\e{2}(\T)}
\end{displaymath} 
and, since $\norm{h_1-h_2}_{L\e{2}(\T)}=0$ at $t=0$, the uniqueness follows.

The proof of the Proposition in the case (ii) follows with a similar argument observing that $\bar{R}$ in \eqref{operadoresgrande} has one term less that $R$ in \eqref{defr}.
 \end{proof}
 
\section*{Acknowledgement}
The authors acknowledge support through the CRC 1060 (The Mathematics of Emergent
Effects) that is funded through the German Science Foundation (DFG), and the Hausdorff
Center for Mathematics (HCM) at the University of Bonn.

\bibliography{referencias}
\bibliographystyle{plain}
\end{document}